\newcommand\quotient[2]{
        \mathchoice
            {
                \text{\raise1ex\hbox{$\#1$}\Big/\lower1ex\hbox{$\#2$}}%
            }
            {
                \#1\,/\,\#2
            }
            {
                \#1\,/\,\#2
            }
            {
                \#1\,/\,\#2
            }
    }
\newcommand{\R}{\mathbb{R}} 
\newcommand{\Z}{\mathbb{Z}}
\newcommand{\Q}{\mathbb{Q}}
\newcommand{\C}{\mathbb{C}}
\newcommand{\bbP}{\mathbb{P}}
\newcommand{\bmA}{\bm{\mathrm{A}}}
\newcommand{\bmB}{\bm{\mathrm{B}}}
\newcommand{\bmc}{\bm{\mathrm{c}}}
\newcommand{\bmf}{\bm{f}}
\newcommand{\bmG}{\bm{\mathrm{G}}}
\newcommand{\bmH}{\bm{\mathrm{H}}}
\newcommand{\bmL}{\bm{\mathrm{L}}}
\newcommand{\bmM}{\bm{\mathrm{M}}}
\newcommand{\bmN}{\bm{\mathrm{N}}}
\newcommand{\bmS}{\bm{\mathrm{S}}}
\newcommand{\bmU}{\bm{\mathrm{U}}}
\newcommand{\bmv}{\bm{\mathrm{v}}}
\newcommand{\bmV}{\bm{\mathrm{V}}}
\newcommand{\bmw}{\bm{\mathrm{w}}}
\newcommand{\bmX}{\bm{\mathrm{X}}}
\newcommand{\bmZ}{\bm{\mathrm{Z}}}
\newcommand{\rmD}{\mathrm{D}}
\newcommand{\rmf}{\mathrm{f}}
\newcommand{\rmG}{\mathrm{G}}
\newcommand{\rmH}{\mathrm{H}}
\newcommand{\rmL}{\mathrm{L}}
\newcommand{\rmM}{\mathrm{M}}
\newcommand{\rmR}{\mathrm{R}}
\newcommand{\rmU}{\mathrm{U}}
\newcommand{\frakg}{\frak{g}}
\newcommand{\frakh}{\frak{h}}
\newcommand{\scrA}{\mathscr{A}}
\newcommand{\scrB}{\mathscr{B}}
\newcommand{\scrC}{\mathscr{C}}
\newcommand{\scrP}{\mathscr{P}}
\newcommand{\scrT}{\mathscr{T}}
\newcommand{\calO}{\mathcal{O}}
\newcommand{\calX}{\mathcal{X}}
\newcommand{\wtx}{\widetilde{x}}
\newcommand{\wtmu}{\widetilde{\mu}}
\newcommand{\ep}{\varepsilon}
\newtheorem{thm}{Theorem}[section]
\newtheorem{coro}[thm]{Corollary}
\newtheorem{defi}[thm]{Definition}
\newtheorem{lem}[thm]{Lemma}
\newtheorem{prop}[thm]{Proposition}
\newtheorem{condition}{Condition}[section]
\def\embds{\hookrightarrow}
\newcommand{\la}{\langle}
\newcommand{\ra}{\rangle}
\DeclareMathOperator{\SL}{SL}
\DeclareMathOperator{\GL}{GL}
\DeclareMathOperator{\diff}{d}
\DeclareMathOperator{\difft}{dt}
\DeclareMathOperator{\Ad}{Ad}
\DeclareMathOperator{\Ave}{Ave}
\DeclareMathOperator{\divisor}{div}
\DeclareMathOperator{\Gal}{Gal}
\DeclareMathOperator{\height}{ht}
\DeclareMathOperator{\LHS}{LHS}
\DeclareMathOperator{\Prob}{Prob}
\DeclareMathOperator{\supp}{supp}
\DeclareMathOperator{\Vol}{Vol}
\newcommand{\norm}[1]{\left\lVert#1\right\rVert}
\date{Nov 2020}
\begin{document}

\title[Counting on homogeneous varieties]{Counting integral points on some homogeneous varieties with large reductive stabilizers}
\author[R.Zhang]{Runlin Zhang}

\email{zhangrunlinmath@outlook.com}
\date{Nov 25, 2020}

\begin{abstract}
Let G be a semisimple group over rational numbers and H be a rational subgroup. Given a rational representation of G and an integral vector x whose stabilizer is equal to H. In this paper we investigate the asymptotic of integral points on Gx with bounded height. We find its asymptotic up to an implicit constant when H is large in G but we allow the presence of intermediate subgroups. This is achieved by a novel combination of two equidistribution results in two different settings: one is that of Eskin, Mozes and Shah on a Lie group modulo a lattice and the other one is a result of Chamber-Loir and Tschinkel on a smooth projective variety with a normal crossing divisor.
\end{abstract}

\maketitle

\tableofcontents

\section{Introduction}

Let $V_0$ be a finite dimensional $\Q$-vector space. Identify $V_0(\R)\cong \R^N$ and hence $V_0(\R)$ is equipped with an Euclidean norm given by
\begin{equation*}
    \norm {(x_1,...,x_N)}:= (x_1^2+...+x_N^2)^{1/2}.
\end{equation*}
Let $B_R:=\{v\in V_0(\R)\,\vert\, \norm{v} \leq R\}$.
We also fix a $\Z$-structure $V_0(\Z)$ on $V_0$.

Let $X \embds V_0$ be an affine variety over $\Q$, a natural question is whether 
$X(\Z) \cap B_R$ is asymptotic to $\Vol(X(\R)\cap B_R)$ for certain natural measure $\Vol$ on $X(\R)$. When $X=V_0$, this is true but it fails in general.
However, one might hope this remains true, at least in some weak sense, in the homogeneous setting.

So let us assume that $X$ is stable and homogeneous (i.e. the action is transitive on $\C$-points) under the linear action of $\bmG$, a closed connected $\Q$-subgroup of $\GL(V_0)$. We assume that $\bmG$ is semisimple. Also assume $X(\Z)$ is non-empty. Then there exists an arithmetic lattice $\Gamma \leq G:=\bmG(\R)$ such that $\Gamma$ preserves the set $X(\Z)$. A theorem of Borel--Harish-Chandra \cite{BorHar62} asserts that $X(\Z)$ decomposes into finitely many $\Gamma$-orbits. Thus it is natural to investigate $ \left\vert
    \Gamma \cdot v_0 \cap B_R
    \right\vert$ for a single $v_0\in X(\Z)$.
The following asymptotic has been established in \cite[Theorem 1.11]{EskMozSha96} (compare \cite{EskMcM93} and \cite{DukRudSar93}) using Ratner's theorem \cite{Rat91} on classifying unipotent-invariant ergodic measure and the linearization technique of Dani--Margulis \cite{DanMar93}.

\begin{thm}
Assume $\bmH\leq \bmG$, the stabilizer group, is maximal among proper connected $\Q$-subgroups and $H:=\bmH(\R)$ has finite volume modulo $H\cap\Gamma$, then 
\begin{equation}\label{bestAsym}
    \lim_{R\to+\infty}
    \frac{ \left\vert
    \Gamma \cdot v_0 \cap B_R
    \right\vert}
    {\mu_{\rmG/\rmH}(\rmG\cdot v_0\cap B_R)} =1.
\end{equation}
\end{thm}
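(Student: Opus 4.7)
The plan is to follow the standard reduction of counting to equidistribution of translated periodic $H$-orbits on $\Gamma\bs G$, and then to invoke the Ratner--Mozes--Shah classification.

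\emph{Setup.} Identify the orbit $\bmG\cdot v_0$ with $\bmG/\bmH$ via $gH\mapsto g\cdot v_0$. Write $\Gamma_\rmH:=\Gamma\cap H$ and $\widetilde B_R:=\{gH\in G/H : g\cdot v_0\in B_R\}$. Then the left-hand side of (\ref{bestAsym}) is $N(R):=\#\{[\gamma]\in\Gamma/\Gamma_\rmH : \gamma H\in\widetilde B_R\}$, while the denominator equals $\mu_{G/H}(\widetilde B_R)$.

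\emph{Automorphic reformulation.} Define
\[
\phi_R(\Gamma g):=\sum_{[\gamma]\in\Gamma/\Gamma_\rmH}\mathbf{1}_{B_R}\bigl(g^{-1}\gamma\cdot v_0\bigr),
\]
which is well-defined on $\Gamma\bs G$ and satisfies $\phi_R(\Gamma e)=N(R)$. Unfolding against Haar measure and using that $(\Gamma_\rmH\bs H,\mu_H)$ has finite volume gives
\[
\int_{\Gamma\bs G}\phi_R\,\diffmu_{\Gamma\bs G}\;=\;\frac{\Vol(\Gamma_\rmH\bs H)}{\Vol(\Gamma\bs G)}\;\mu_{G/H}(\widetilde B_R),
\]
which, with the normalizations implicit in the statement, is the predicted main term.

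\emph{Smoothing via the wave-front property.} To compare $\phi_R(\Gamma e)$ with its average, let $U\subset G$ be a small symmetric identity neighborhood of Haar volume $\delta$. By the wave-front lemma of Eskin--McMullen, for large $R$ the translates $u\cdot B_R$, $u\in U$, differ from $B_R$ only by a set of relative $\mu_{G/H}$-measure $o(1)$. Thus
\[
N(R)\;=\;\tfrac{1}{\delta}\int_U\phi_R(\Gamma u)\,\diffu\;+\;o\bigl(\mu_{G/H}(\widetilde B_R)\bigr),
\]
and a Fubini swap turns the main term into
\[
\int_{\widetilde B_R}\Bigl(\int_{\Gamma_\rmH\bs H}\mathbf{1}_{\Gamma U}(gh)\,\diffh\Bigr)\,\diffmu_{G/H}(gH),
\]
in which the inner integral is the mass that the translated periodic orbit $g\cdot[\Gamma_\rmH\bs H]$ places on the fixed small neighborhood $[U]\subset\Gamma\bs G$.

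\emph{Equidistribution, and where the hypotheses enter.} It thus suffices to prove the following: for every sequence $g_n\in G$ with $g_n\cdot v_0\to\infty$ in $V_0(\R)$, the push-forward $(g_n)_*\mu_H$ converges weak-$*$ to the Haar probability on $\Gamma\bs G$. By Mozes--Shah, any weak-$*$ limit is supported on a closed orbit of a connected subgroup $L\leq G$ generated by one-parameter unipotents, and some conjugate of $H$ is contained in $L$; tracing through the $\Q$-structure, $L$ would correspond to a $\Q$-subgroup strictly larger than $\bmH$, so the maximality hypothesis on $\bmH$ forces $L=G$ and the limit is Haar. The principal obstacle is to exclude \emph{escape of mass} on the non-compact quotient $\Gamma\bs G$, i.e., to verify tightness of $(g_n)_*\mu_H$. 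This is precisely the content of the Dani--Margulis linearization technique, which quantifies how long unipotent trajectories can spend near a proper algebraic subvariety, and constitutes the technical heart of the argument.
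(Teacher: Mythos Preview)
The paper does not prove this theorem; it is quoted as a known result from \cite[Theorem~1.11]{EskMozSha96} and serves only as motivation for the paper's own contribution. So there is no ``paper's proof'' to compare against. That said, your sketch is essentially the Duke--Rudnick--Sarnak/Eskin--McMullen reduction combined with the equidistribution input from \cite{EskMozSha96}, which is indeed how the cited result is proved, and the paper's own Proposition~\ref{propEquiImplyCount} is a (more elaborate) abstraction of the same mechanism.

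Two points deserve tightening. First, what you invoke as the ``wave-front lemma'' is not what is being used here: the wave-front lemma concerns stability of $B_R$ under right $H$-translation in affine symmetric spaces, whereas the step you actually need is \emph{well-roundedness} under small left $G$-translation. For balls coming from a Euclidean norm on a linear representation this is elementary, since $\norm{u\cdot v}\le C(u)\norm{v}$; no special geometry of $G/H$ is required.

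Second, and more substantively, maximality of $\bmH$ alone does not force $L=G$ in your equidistribution step: it only forces $\bmL$ to equal either $\bmG$ or a $\Gamma$-conjugate of $\bmH$. You must rule out the latter. In the decomposition $g_n=\delta_n\gamma_n h_n$ of Theorem~\ref{thmEMSequi1}, if $\bmL=\gamma\bmH\gamma^{-1}$ then $g_n\cdot v_0=\delta_n\gamma_n\cdot v_0$ lies in a bounded set times the discrete $\Gamma$-orbit of $v_0$, hence $g_n\cdot v_0$ would stay bounded, contradicting $g_n\cdot v_0\to\infty$. Without this observation the argument is incomplete; this is exactly the step where ``focusing'' is excluded in the maximal case.
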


Going beyond this situation, it has been noted in \cite[Section 7]{EskMozSha96} that ``focusing" might happen. As a consequence, they found examples where the above limit exists but may not be equal to $1$. However, they are not able to make a general statement beyond the case of maximal subgroups, though in some special classes of subgroups results of this kind exist (see \cite{GoroTaklTschin15} and \cite{Yang18Rational} for the similar problem of counting rational points). 
The stabilizer in their example has its centralizer not contained in the stabilizer group. If one exclude this situation, then there still might be finitely many intermediate subgroups between $\bmG$ and $\bmH$. For this, the authors of \cite{EskMozSha96} are able the establish this asymptotic \ref{bestAsym} for certain special $(\rho_0, v_0)$ in the presence of intermediate subgroups. For instance they proved 
\begin{thm}
Assume $\bmG=\SL_n$ and $\bmH\leq \bmG$ is a maximal $\Q$-torus that is $\Q$-anisotropic. Let $\rho_0$ be the Adjoint representation $\bmG\to \GL(\frakg)$ and $v_0$ is a rational matrix in $\frakg$ whose centralizer in $\bmG$ is equal to $\bmH$, then asymptotic \ref{bestAsym} holds.
\end{thm}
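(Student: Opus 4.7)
The plan is to run the unfolding-and-equidistribution scheme underlying the Eskin--Mozes--Shah count, and to show that, despite the possible existence of intermediate connected $\Q$-subgroups between $\bmH$ and $\bmG$, the $\Q$-anisotropy of $\bmH$ forces their contribution to the count to be of strictly lower order than the main term.

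Set $\Gamma_H := \Gamma \cap H$ and let $\phi_R$ denote the indicator function of $B_R$. Define
\begin{equation*}
F_R(g\Gamma) := \sum_{\gamma \in \Gamma/\Gamma_H} \phi_R(g\gamma v_0),
\end{equation*}
so that $|\Gamma \cdot v_0 \cap B_R| = F_R(e\Gamma)$. For a non-negative continuous test function $\psi$ on $G/\Gamma$ concentrated near $e\Gamma$ and normalized by $\int_{G/\Gamma}\psi \, dg = 1$, a Fubini computation via $G \to G/\Gamma_H \to G/H$ yields
\begin{equation*}
\int_{G/\Gamma} F_R \, \psi \, dg \;=\; \Vol(H/\Gamma_H) \int_{G v_0 \cap B_R} \Psi(v) \, d\mu_{G/H}(v), \qquad \Psi(gv_0) := \int_H \psi(gh\Gamma)\, dh.
\end{equation*}
A sandwich argument as the support of $\psi$ contracts to $\{e\Gamma\}$ then reduces the theorem to showing that the translated periodic orbits $gH\Gamma / \Gamma$ equidistribute toward the Haar probability measure on $G/\Gamma$ as $g v_0$ exits compact subsets of $Gv_0$.

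By the Eskin--Mozes--Shah equidistribution theorem, failure of equidistribution along a sequence $g_n v_0 \to \infty$ forces the existence of a proper connected $\Q$-subgroup $\bmH \subseteq \bmL \subsetneq \bmG$ and elements $\gamma_n \in \Gamma$ such that, after passing to a subsequence, $g_n H \gamma_n \subseteq L$. Any such $\bmL$ is analysed as follows: choose a $\Q$-Levi complement $\bmL_{\mathrm{red}}$ of the unipotent radical of $\bmL$ containing $\bmH$; then $Z(\bmL_{\mathrm{red}})^\circ$ is a $\Q$-subtorus of $\bmH$, hence $\Q$-anisotropic; and $\bmL_{\mathrm{red}}$ coincides with the $\SL_n$-centralizer of $Z(\bmL_{\mathrm{red}})^\circ$. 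Thus every intermediate $\bmL$ belongs to a finite list of conjugacy classes of connected $\Q$-subgroups of strictly smaller dimension than $\bmG$, and the corresponding focusing locus in $Gv_0$ is covered by countably many translates of $\bmL \cdot v_0 \subset \frakg$, an affine subvariety of dimension $\dim \bmL - \dim \bmH < \dim \bmG - \dim \bmH$.

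The main obstacle is bounding the number of integral points in these focusing loci. Using polynomial volume asymptotics $\mu_{G/H}(Gv_0 \cap B_R) \asymp R^{a_{\bmG}}$ and $\mu_{L/H}(Lv_0 \cap B_R) \asymp R^{a_\bmL}$ (computed from the weights of the adjoint representation, in the spirit of the Chambert-Loir--Tschinkel framework on a smooth compactification with a normal-crossing boundary divisor), one verifies the strict inequality $a_\bmL < a_\bmG$ uniformly over the finite list of intermediate $\bmL$. A recursive application of the Eskin--Mozes--Shah count to the pair $(\bmL, \bmH)$ inside each intermediate orbit, or a more elementary Borel--Serre type bound, then shows that the number of $\Gamma$-translates of $v_0$ landing in a tubular neighborhood of $\bmL v_0 \cap B_R$ is $O(R^{a_\bmL})$, absorbed into the error. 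Combining the focusing bound with the equidistribution on its complement and letting $\psi$ shrink to $\delta_{e\Gamma}$ yields~\eqref{bestAsym}. The technical crux lies in verifying $a_\bmL < a_\bmG$: this is where the regular semisimplicity of $v_0$ and the structure of the adjoint representation of $\SL_n$ on $\fraksl_n$ become essential, since they ensure that $\bmG v_0$ is full-dimensional in $\fraksl_n$ while every proper $\bmL v_0$ lies in a proper subspace.
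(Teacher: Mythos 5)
This theorem is not proved in the present paper: it is quoted from Eskin--Mozes--Shah \cite{EskMozSha96} as motivation, and the author explicitly defers to that reference. So you are really reconstructing the EMS argument, and you should be aware there is no internal proof in this paper to check against.

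Your outline does follow the EMS strategy (unfold to an average of $g_*\mu_{\rmH}$; invoke the Ratner/linearization equidistribution theorem to isolate intermediate connected $\Q$-subgroups $\bmL$ as the only possible obstruction; rule out ``focusing'' by comparing the polynomial growth exponents $a_{\bmL}$ and $a_{\bmG}$ of $\mu_{\rmL/\rmH}(\rmL v_0\cap B_R)$ and $\mu_{\rmG/\rmH}(\rmG v_0\cap B_R)$). Your structural reduction is essentially sound: since $\bmH$ is $\Q$-anisotropic it lies in no proper $\Q$-parabolic, hence any intermediate connected $\Q$-subgroup $\bmL$ is reductive by Borel--Tits, and in $\SL_n$ a connected reductive subgroup containing a maximal torus is generated by the torus and roots, so $\bmL = \bmZ_{\bmG}(\bmS)$ for a subtorus $\bmS\subset\bmH$; this gives finiteness of the list.

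The genuine gap is the step you label ``one verifies $a_{\bmL}<a_{\bmG}$.'' This is not a dimension count; the exponent is determined by the highest weight data of the adjoint representation restricted to a maximal $\R$-split torus of $\bmG$ (resp.\ of $\bmL$), via a $\rmK\rmA\rmK$-type integral, and the answer depends on which roots survive in $\bmL$ and on the $\R$-isotropy type of $\bmH$ (compact torus versus $\R$-split versus mixed). Asserting the strict inequality ``uniformly over the finite list'' is exactly the content of the EMS verification and is the entire substance of the theorem; without that computation your proof proves nothing beyond the general conditional statement. Likewise, ``a recursive application of the Eskin--Mozes--Shah count to $(\bmL,\bmH)$'' and ``a more elementary Borel--Serre type bound'' are placeholders: what is actually needed is a bound on the number of points $\gamma v_0$ with $\gamma\in\Gamma$ landing in a tube around some translate of $\bmL v_0$ intersected with $B_R$, and that bound must come out $o(R^{a_{\bmG}})$. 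You should replace both hand-waves with the explicit weight computation for $\SL_n$ and the explicit linearization estimate, or at minimum cite the precise EMS lemmas that contain them.
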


The purpose of the paper is to establish the weaker asymptotic 
\begin{equation}\label{weakerAsym}
    \text{ there exits }c>0  \text{ such that }\quad
    \lim_{R\to+\infty}
    \frac{ \left\vert
    \Gamma \cdot v_0 \cap B_R
    \right\vert}
    {\mu_{\rmG/\rmH}(G\cdot v_0\cap B_R)} =c
\end{equation}
for certain special $\bmH$ that may not be maximal and for general $(\rho_0,v_0)$.

We shall require $\rmH$ to have finite volume modulo $\rmH\cap \Gamma$ (it is expected that the weaker asymptotic is false without this assumption and we do not go into that direction in this paper, see \cite{OhSha14,KelKon18,ShaZhe18,Zha18,Zha19}) and we denote the probability measure supported on $\rmH\Gamma/\Gamma$ by $\mu_{\rmH}$. Additionally we require the following conditions.

\begin{condition}\label{condition 1}
 $\bmH$ is reductive and $(\bmZ_{\bmG}\bmH)^{\circ}$, the connected component of the centralizer of $\bmH$ in $\bmG$, is contained in $\bmH$.
\end{condition}

\begin{condition}\label{condition 2}
For all connected $\Q$-subgroup $\bmL$ with $\bmH\subset \bmL \subset \bmG$, we have $\rmH$ intersect every connected component of $\rmL$ non-trivially.
\end{condition}

The first condition implies that (see Lemma \ref{lemFiniteInterme})
\begin{itemize}
    \item
     $\bmG$ is semisimple, $\bmH$ is reductive and there are only finitely many intermediate subgroups between $\bmH$ and $\bmG$.
\end{itemize}

The second condition is fulfilled if (see \cite{Matsu64} or \cite[Theorem 14.4]{BorelTits65})
\begin{itemize}
    \item $\bmH$ contains a maximal $\R$-split torus.
\end{itemize}

Therefore, maximal tori containing a maximal $\R$-split torus will meet both conditions.

Now we are ready to state our main theorem.

\begin{thm}\label{thmCounting1}
let $\bmG,\bmH,\Gamma, \rho_0,v_0, \norm{\cdot}$ be the same as above. In addition we assume that $\bmH$ satisfies condition \ref{condition 1} and \ref{condition 2}.  Then the weaker asymptotic \ref{weakerAsym} holds.
\end{thm}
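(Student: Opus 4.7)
The plan is to combine two equidistribution tools, as the abstract suggests. First, I would adapt the Eskin--McMullen and Duke--Rudnick--Sarnak unfolding to reduce the count to an integral over $G/H\cong X(\R)$. Concretely, pick a nonnegative bump $\varphi_\ep\in C_c(\Gamma\bs G)$ of mass $1$ concentrated near the identity coset and set $\rho_\ep(gH):=\langle g_*\mu_H,\varphi_\ep\rangle$; integrating against $\varphi_\ep$ and unfolding the sum over $\Gamma/\Gamma_{v_0}$ should yield
\begin{equation*}
    \bigl\vert \Gamma\cdot v_0\cap B_R \bigr\vert
    \;\approx\;
    \int_{G/H} \mathbf{1}_{B_R}(gv_0)\,\rho_\ep(gH)\,d\mu_{G/H}(gH),
\end{equation*}
with error controlled by $\ep$. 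The problem reduces to understanding the pointwise limit of $\rho_\ep(gH)$ as $\ep\to 0$ and $gH\to\infty$ in $X(\R)$, and then integrating it against $\mathbf{1}_{B_R}$.

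Second, I would invoke the Eskin--Mozes--Shah classification of weak-$*$ limits of $(g_n)_*\mu_H$. By Condition~\ref{condition 1} there are only finitely many intermediate connected $\Q$-subgroups $\bmH=\bmL_0\subsetneq\cdots\subsetneq\bmL_k=\bmG$, and any subsequential limit must be the invariant probability measure on a closed $L$-orbit for some intermediate $L$; the limit is $\mu_G$ generically and is $\mu_{L_i}$ only when $gH$ approaches a specific $L_i$-invariant focusing locus $F_{L_i}\subset G/H$. Condition~\ref{condition 2} is essential here: it guarantees $\rmH$ meets every real component of each intermediate $L$, so the focusing loci are captured by honest $L_i$-orbits inside $G/H$ without mass loss to other components. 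Consequently $\rho_\ep$ converges to a piecewise-constant $\rho_\infty$ equal to $1$ generically and to an explicit positive constant $c_{L_i}$ on each $F_{L_i}$.

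Third, I would replace the integrand by $\rho_\infty$ and compute both $\mu_{G/H}(B_R\cap X(\R))$ and each $\mu_{G/H}(B_R\cap F_{L_i})$ via the Chambert-Loir--Tschinkel framework. That is, choose a smooth projective $\bmG$-equivariant compactification $\compactX$ of $X$ with normal crossing boundary $\bigcup D_j$, arranged so that each closure $\overline{F_{L_i}}$ is a union of boundary strata, and apply their height-integral technology: the leading asymptotics of each such measure are given by Mellin-type integrals supported on the corresponding boundary strata, so each ratio $\mu_{G/H}(B_R\cap F_{L_i})/\mu_{G/H}(B_R\cap X(\R))$ tends to an explicit positive $\lambda_{L_i}$. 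Summing the generic contribution and the finitely many focusing contributions produces $c = 1+\sum_{i=1}^{k-1} c_{L_i}\lambda_{L_i}>0$, establishing asymptotic~\ref{weakerAsym}.

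The hard part will be the geometric matching between the focusing combinatorics coming from Eskin--Mozes--Shah and the boundary combinatorics of $\compactX$, together with upgrading the qualitative Eskin--Mozes--Shah equidistribution to a quantitative rate uniform in $gH$ so that the limits $\ep\to 0$ and $R\to\infty$ commute even near the corners of $\bigcup D_j$ where several focusing loci meet. Without such uniform control the contributions from the focusing loci could blow up rather than adding to a finite $c$; this is exactly the pathology observed in \cite[Section~7]{EskMozSha96} that Conditions~\ref{condition 1} and~\ref{condition 2} are designed to rule out.
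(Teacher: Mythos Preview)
Your ingredients are right but the architecture has a genuine gap. The step you flag as ``the hard part'' --- upgrading Eskin--Mozes--Shah to a rate uniform in $gH$ so that the limits $\ep\to 0$ and $R\to\infty$ commute --- is not available and is not what the paper does. Relatedly, the picture of a ``piecewise-constant $\rho_\infty$ on $G/H$'' is off: focusing is a boundary phenomenon, not something that occurs on a subset $F_{L_i}\subset G/H$; for any fixed $gH$ the measure $g_*\mu_H$ is just $g_*\mu_H$, and the intermediate $\mu_{L_i}$ appear only as limits along sequences going to infinity. So there is no function on $G/H$ to which $\rho_\ep$ converges, and no way to split the integral over $B_R$ into a ``generic'' piece plus finitely many ``focusing'' pieces living inside $G/H$.

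The paper circumvents this entirely. Instead of controlling $\langle g_*\mu_H,\varphi_\ep\rangle$ pointwise, it studies the \emph{averaged} measure $\frac{1}{\mu_{\rmG/\rmH}(B_R)}\int_{B_R} g_*\mu_H\,d\mu_{\rmG/\rmH}$ directly and shows it converges in $\Prob(\rmG/\Gamma)$; Proposition~\ref{propEquiImplyCount} then extracts the counting constant as a Radon--Nikodym derivative, with no $\ep$ in sight. The convergence itself is obtained by a compactification trick: embed $\rmG/\rmH$ into $\Prob(\rmG/\Gamma)$ via $[g]\mapsto g_*\mu_H$, take the closure $X_0$, and (using EMS plus Conditions~\ref{condition 1},~\ref{condition 2}) build an \emph{algebraic} compactification $X_1$ together with a continuous $\rmG$-equivariant map $\pi_0:X_1\to X_0$. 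After an equivariant resolution $X_2\to X_1$, one checks that both the Haar measure and the height are ``geometric'' in the Chambert-Loir--Tschinkel sense, so their theorem gives convergence of $\frac{1}{\mu_{\rmG/\rmH}(B_R)}\mu_{\rmG/\rmH}|_{B_R}$ to a probability measure $\lambda_2$ on $X_2$. Pushing $\lambda_2$ forward along $X_2\to X_1\to X_0\hookrightarrow\Prob(\rmG/\Gamma)$ and then averaging yields the limit of the averaged homogeneous measures. The point is that EMS is used only \emph{qualitatively}, to produce the continuous map $\pi_0$; all the analysis of ``how much mass goes to which boundary stratum'' is outsourced to Chambert-Loir--Tschinkel on the algebraic side, where no rates are needed.
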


Via a theorem of Borel--Harish-Chandra \cite{BorHar62} and Chamber-Loir--Tschinkel \cite{ChamTschin10}, one can deduce the following.

\begin{coro}\label{CoroCounting2}
Same assumption as above. Fix an integral structure on $\bmV_0$. It induces an integral model $\calX$ of $\bmG\cdot v_0$. Assume $\calX(\Z)\neq \emptyset$. Then there exists a constant $c'>0$ and $a,b\geq 0$ such that 
\begin{equation*}
    \lim_{R\to+\infty} 
    \frac
    {\left\vert
    \calX(\Z)\cap B_R
    \right\vert}
    {c'R^a (\ln{R})^b} =1.
\end{equation*}
When the height function is geometric, then $a=\sigma_{U}$ and $b=b'_{U}$ where $U$ is defined to be union of orbits of $\bmG(\R)$ on $\calX(\R)$ that contain an integral point.
\end{coro}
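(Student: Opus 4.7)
The plan is to combine three ingredients: the Borel--Harish-Chandra theorem to reduce the count $\lvert \calX(\Z) \cap B_R \rvert$ to a sum over finitely many $\Gamma$-orbits; Theorem \ref{thmCounting1} applied to each orbit to convert its count into a volume asymptotic of the shape $c_i \cdot \mu_{\rmG/\rmH_i}(\rmG \cdot v_i \cap B_R)$; and the Chambert-Loir--Tschinkel volume asymptotic \cite{ChamTschin10} to put each such volume into the form $C_i R^a (\ln R)^b (1+o(1))$. Summing over $i$ then gives the constant $c' = \sum_i c_i C_i > 0$.

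First, Borel--Harish-Chandra \cite{BorHar62} decomposes $\calX(\Z) = \bigsqcup_{i=1}^{k} \Gamma \cdot v_i$ as a finite disjoint union (with $v_1 = v_0$). For each $i$, the stabilizer $\bmH_i$ is $\bmG(\overline{\Q})$-conjugate to $\bmH$ and is therefore reductive; Condition \ref{condition 1} is preserved under conjugation of reductive groups, and Condition \ref{condition 2} likewise transfers since the lattice of intermediate connected $\Q$-subgroups and the connected-component structure are conjugation-invariant. The requirement that $\rmH_i$ has finite volume modulo $\rmH_i \cap \Gamma$ follows from $v_i \in \calX(\Z)$. Theorem \ref{thmCounting1} thus yields a constant $c_i > 0$ with
\begin{equation*}
    \bigl\lvert \Gamma \cdot v_i \cap B_R \bigr\rvert \sim c_i \cdot \mu_{\rmG/\rmH_i}\bigl(\rmG \cdot v_i \cap B_R\bigr).
\end{equation*}

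Second, \cite{ChamTschin10} applied to a smooth projective equivariant compactification of $\calX$ with strict normal crossing boundary, together with the height induced by $\norm{\cdot}$, produces a meromorphic continuation of the associated height zeta function whose rightmost pole gives
\begin{equation*}
    \mu_{\rmG/\rmH_i}\bigl(\rmG \cdot v_i \cap B_R\bigr) \sim C_i R^a (\ln R)^b,
\end{equation*}
where the exponents are determined by the boundary divisor and height data on the $\bmG(\R)$-orbit of $v_i$. Since $U$ is by construction the union of precisely those $\bmG(\R)$-orbits in $\calX(\R)$ that contain integral points, the pair $(a,b)$ is common to all $i$, and in the geometric-height case coincides with $(\sigma_U, b'_U)$ by the standard definitions of those invariants. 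Combining the two asymptotics and summing over $i$ yields the corollary.

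The main obstacle is twofold. First, one must verify that Conditions \ref{condition 1} and \ref{condition 2} survive the passage from $v_0$ to an arbitrary $\Gamma$-orbit representative $v_i$; this is essentially functorial but deserves a careful check because distinct $\Gamma$-orbits need not be $\bmG(\Q)$-related. Second, one must confirm that the asymptotic produced by \cite{ChamTschin10} has the same leading exponents $(a,b)$ uniformly across the $\bmG(\R)$-orbits composing $U$, so that the finite sum over $i$ retains the clean form $c' R^a (\ln R)^b$; contributions from any orbit with strictly smaller exponent pair must then be absorbed into the $o(R^a(\ln R)^b)$ error.
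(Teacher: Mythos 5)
Your three-ingredient strategy (Borel--Harish-Chandra decomposition $\to$ orbit-by-orbit application of Theorem \ref{thmCounting1} $\to$ Chambert-Loir--Tschinkel volume asymptotics) is exactly how the paper derives the corollary, which it does without elaboration by combining Theorem \ref{thmCounting1} with Theorem \ref{thmAsymBalls}. So the route is right, but there is a genuine gap in the way you justify the crucial intermediate step.

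You assert that ``Condition \ref{condition 2} likewise transfers since the lattice of intermediate connected $\Q$-subgroups and the connected-component structure are conjugation-invariant.'' This reasoning does not hold: the element conjugating $\bmH$ to $\bmH_i$ lies only in $\bmG(\overline{\Q})$ (or $\bmG(\C)$), not in $\bmG(\Q)$, because distinct $\Gamma$-orbits in $\calX(\Z)$, and indeed distinct $\bmG(\R)$-orbits in $\calX(\R)$, need not be related by a $\Q$-rational element. Conjugation by a non-$\Q$-rational element neither preserves $\Q$-rationality of intermediate subgroups nor the real-point topology ($\rmH$ vs.\ $\rmH_i$), so neither the poset of intermediate connected $\Q$-subgroups nor the connected-component condition about $\R$-points transfers functorially. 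Condition \ref{condition 1} does transfer, being a statement over $\C$; Condition \ref{condition 2}, and likewise the finite-covolume hypothesis on $\rmH_i/(\rmH_i\cap\Gamma)$, do not follow from $v_i \in \calX(\Z)$ by the argument given. You in fact flag this yourself in the ``obstacle'' paragraph as something that ``deserves a careful check,'' but the main body treats it as already settled, which it is not. Either an additional hypothesis (that every stabilizer of an integral point satisfies Conditions \ref{condition 1}, \ref{condition 2} and the covolume assumption) must be made explicit, or an actual argument must be supplied; note the paper itself is silent on this point, so you cannot simply cite it.

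A smaller inconsistency: in the main argument you write ``the pair $(a,b)$ is common to all $i$,'' while in the obstacle paragraph you acknowledge that some orbits may have strictly smaller exponents whose contributions go into the error term. The second statement is the correct one; the exponents $\sigma_U$ and $b'_U$ in the paper are computed from the union $U$ via the maxima in Section \ref{secBigEquiOnVarie}, and Theorem \ref{thmAsymBalls} is applied with $U$ a union of connected components, which builds the comparison across orbits into the definition rather than requiring the exponents to agree orbit-by-orbit.
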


Being geometric refers to those height functions constructed in Section \ref{secEquiInGeometry}. This may be viewed as an integral version of Manin conjecture with an implicit constant. 
Note that $U$ is not very explicit in general, but when $\rmG$-acts transitively on $\bmG/\bmH(\R)$ (for instance, when the stabilizer group is an $\R$-split torus), one can take $U:=\bmG/\bmH(\R)$ and the constants $a_U$, $b'_U$ agree with the normal one. We do not know whether $a$ being equal to $0$ could actually happen. 

One may also put weights on orbits and ask whether the weighted version of asymptotic \ref{weakerAsym} holds. For this we provide the following equidistribution result, which allows one to put weights given by functions that can be extended to the compactification space.

\begin{thm}\label{thmEquiorCountwithWeights}
Same assumption as above. Then 
\begin{equation}
    \lim_{R\to \infty}\frac{1}{c\mu_{\rmG/\rmH}(B_R)} \sum_{x\in \Gamma\cdot v_0 \cap B_R} \delta_x
\end{equation}
converges to a probability measure on the closure of $\rmG/\rmH$ in $\bbP(V_0\oplus \Q)$. The constant $c$ here is the same as the one in the asymptotic \ref{weakerAsym}.
\end{thm}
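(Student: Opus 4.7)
The plan is to derive Theorem~\ref{thmEquiorCountwithWeights} from Theorem~\ref{thmCounting1} in two steps: first upgrade the counting asymptotic \ref{weakerAsym} to a weighted statement, then combine it with the volume equidistribution of Chamber-Loir--Tschinkel type already used in the proof of Corollary~\ref{CoroCounting2}. Denote by $Y$ the closure of $\rmG/\rmH$ in $\bbP(V_0\oplus\Q)(\R)$, and set
\[
\nu_R \;:=\; \frac{1}{c\,\mu_{\rmG/\rmH}(B_R)} \sum_{x\in \Gamma\cdot v_0 \cap B_R} \delta_{[x:1]}.
\]
Since $Y$ is compact the family $\{\nu_R\}$ is automatically tight, so it suffices to show that $\int_Y f\,d\nu_R$ converges as $R\to\infty$ for every $f\in C(Y)$; by Stone--Weierstrass one may further restrict $f$ to a dense subfamily such as the restrictions of smooth functions on $\bbP(V_0\oplus\Q)(\R)$.

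The first step is a weighted analogue of Theorem~\ref{thmCounting1}: for every $f\in C(Y)$,
\[
\sum_{x\in \Gamma\cdot v_0 \cap B_R} f([x:1])
\;=\; c\int_{\rmG\cdot v_0 \cap B_R} f([v:1])\,d\mu_{\rmG/\rmH}(v)
\;+\; o\bigl(\mu_{\rmG/\rmH}(B_R)\bigr).
\]
I would prove this by an annular decomposition. Fix $\varepsilon>0$ and partition $B_R$ into shells $A_k^{(\varepsilon)} := B_{R(1-(k-1)\varepsilon)}\setminus B_{R(1-k\varepsilon)}$, $k=1,\dots,\lfloor 1/\varepsilon\rfloor$. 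In projective coordinates the map $v\mapsto [v:1]$ sends such a thin annulus into a small neighborhood of a ``slice at infinity'' in $Y$, so the uniform continuity of $f$ on $Y$ forces $f([x:1])$ to be constant on each $A_k^{(\varepsilon)}$ up to some modulus $\omega(\varepsilon)\to 0$. Theorem~\ref{thmCounting1} applied to the two balls bounding $A_k^{(\varepsilon)}$ gives $|\Gamma\cdot v_0 \cap A_k^{(\varepsilon)}| = c\,\mu_{\rmG/\rmH}(A_k^{(\varepsilon)}) + o_R\bigl(\mu_{\rmG/\rmH}(B_R)\bigr)$; a $\limsup/\liminf$ bracket---first $R\to\infty$ with $\varepsilon$ fixed, then $\varepsilon\to 0$---then yields the weighted asymptotic.

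The second step normalizes by $\mu_{\rmG/\rmH}(B_R)$ and invokes the Chamber-Loir--Tschinkel volume equidistribution. On a smooth projective model of $\rmG\cdot v_0$ with simple normal crossings boundary dominating $Y$, it produces
\[
\frac{1}{\mu_{\rmG/\rmH}(B_R)}\int_{\rmG\cdot v_0 \cap B_R} f([v:1])\,d\mu_{\rmG/\rmH}(v)
\;\xrightarrow[R\to\infty]{}\; \int_Y f\,d\mu_\infty
\]
for a probability measure $\mu_\infty$ on $Y$, supported on the boundary divisor at infinity (consistent with the fact that most of the volume in $B_R$ concentrates near $\partial B_R$). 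Combined with the first step, this identifies $\lim_R \int_Y f\,d\nu_R = \int_Y f\,d\mu_\infty$ and proves the theorem.

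The principal obstacle is the qualitative nature of Theorem~\ref{thmCounting1}: no rate of convergence is available, so the two parameters $R$ and $\varepsilon$ cannot be coupled and the nested-limit procedure above is forced. The decomposition works because for each fixed $\varepsilon$ only $\lfloor 1/\varepsilon\rfloor$ applications of Theorem~\ref{thmCounting1} are invoked, so the cumulative error remains of size $o_R\bigl(\mu_{\rmG/\rmH}(B_R)\bigr)$. A secondary technical point, already handled in Section~\ref{secEquiInGeometry} for Corollary~\ref{CoroCounting2}, is the construction of a smooth projective model in which the boundary of $Y$ is a simple normal crossings divisor so that Chamber-Loir--Tschinkel applies directly; this is where the geometric setup of the earlier sections is imported.
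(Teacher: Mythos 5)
Your Step~1 contains a genuine gap. You claim that uniform continuity of $f$ on $Y$ forces $f([x:1])$ to be approximately constant (up to $\omega(\varepsilon)$) on each annulus $A_k^{(\varepsilon)}=B_{R(1-(k-1)\varepsilon)}\setminus B_{R(1-k\varepsilon)}$, but this is false: a thin annulus does \emph{not} map to a small-diameter subset of $Y$. Under $v\mapsto[v:1]$ the annulus $\{R(1-\varepsilon)<\norm{v}<R\}$ sweeps out all directions $v/\norm{v}$; its image in $Y$ is a "thickened shell" near the boundary divisor, with diameter bounded below independently of $R$ and $\varepsilon$. A continuous $f$ on $Y$ that is nonconstant on the boundary at infinity (which is the generic situation) will therefore have oscillation of order $1$ on each $A_k^{(\varepsilon)}$, and the $\limsup/\liminf$ bracket does not close. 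The natural repair---refining into angular sectors $C\cap A_k^{(\varepsilon)}$ so that each piece really is small in $Y$---runs into the second, deeper problem: Theorem~\ref{thmCounting1} gives the constant $c$ only for the specific family $\{B_R\}$ of norm balls, and there is no a priori reason the \emph{same} constant governs the counting asymptotic inside a fixed cone $C$. Indeed $c$ is the value $f_\infty([e])$ of the Radon--Nikodym derivative in Proposition~\ref{propEquiImplyCount}, which is built from the Chamber-Loir--Tschinkel limit $\lambda_2$ on $X_2$; a cone produces a different limit measure, hence potentially a different constant. (This is precisely the "focusing" phenomenon the introduction warns about.)

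The paper's proof avoids the decomposition entirely: it inserts the weight $\phi$ directly into the double integral of Proposition~\ref{propEquiImplyCount}, showing that
\begin{equation*}
\lim_{R\to\infty}\frac{1}{\mu_{\rmG/\rmH}(B_R)}\int_{B_R}\phi([g])\,g_*\mu_{\rmH}\,\mu_{\rmG/\rmH}([g])
\end{equation*}
exists. The existence follows because $\phi$ is a continuous function on $X_2$ and the Chamber-Loir--Tschinkel limit $\lambda_2$ lives on $X_2$, which maps continuously to $\Prob(\rmG/\Gamma)$ via $\pi_0\circ\pi_1\circ\pi_2$; one can therefore pair $\phi$ against $\lambda_2$ jointly with the measure-valued coordinate. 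The localization in Proposition~\ref{propEquiImplyCount} is around a point $[g_0]\in\rmG/\Gamma$ (using small neighborhoods $U_\varepsilon$ in $\rmG$), not an annular localization in $B_R$, so no piecewise constancy of $\phi$ on shells is needed. This yields a positive continuous $f_\infty^\phi$ with $f_\infty^\phi([g])=\lim_R\sum_\gamma(1_{B_R}\cdot\phi)(g\gamma v_0)/\mu_{\rmG/\rmH}(B_R)$, and Riesz representation applied to $\phi\mapsto f_\infty^\phi([e])/f_\infty^1([e])$ produces the limit measure. Your Step~2 (invoking Chamber-Loir--Tschinkel equidistribution of the Haar volume in balls) is correct and is indeed used, but it must be coupled with the arithmetic side through the measure-space compactification $X_0\subset\Prob(\rmG/\Gamma)$ and the continuous surjection $X_2\to X_0$, not through an annular decomposition of $B_R$.
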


Similar statements hold replacing $\bbP(V_0\oplus \Q)$ by $\bbP(V_0)$.
When $\bmH$ is a symmetric subgroup a more precise statement has been established in \cite{GorOhSha09}. In the current situation we do not know if the limit coincides with the following 
\begin{equation*}
     \lim_{R\to \infty}\frac{1}{c\mu_{\rmG/\rmH}(B_R)} \mu_{\rmG/\rmH}\big\vert_{B_R}.
\end{equation*}
When focusing does not arise, this is expected to be true.

Let us list some remarks.

\begin{itemize}
    \item Our results do not provide an explicit expression on $c$. A priori, it depends on all the data.
    In particular methods from \cite{BoroRudn95} or \cite{WeiXu16} does not apply to give a local-to-global type expression for the asymptotic even up to a constant.
    \item
    Our results do not cover the example in \cite[Section 7]{EskMozSha96} as the centralizer of $\bmH$ in that example has non-trivial torus centralizer. However, if the centralizer $\bmZ$ is semisimple, then $\bmG/\bmH \cong \bmG \times \bmZ/ \bmH \times \Delta(\bmZ)$ as a $\bmG$-variety, which would satisfy our condition \ref{condition 1}. Thus in principle, the counting problem can be reduced to the case considered in the present paper (if condition \ref{condition 2} would be satisfied!) if the height function comes from some $(\bmG\times \bmZ)$-equivariant compactification.
    \item
    The height function that can be dealt with must be geometric. We do not know how to handle a general algebraic proper function on $\rmG/\rmH$. For instance, one may change the normal Euclidean norm of the ambient vector space by weights, say, $\left( x_1^2+x_2^6+x_3^9+...  \right)^{1/2}$.
    \item 
    We consider the second condition \ref{condition 2} as a technical matter. Though to remove that condition, one might have to generalize \cite{ChamTschin10} to a setting more general than varieties.
    \item
    It is tempting to apply the same methods to the rational counting problem, as the corresponding results from ergodic theory are available thanks to the work of Gorodnik--Oh \cite{GoroOh11}, assuming $\bmH$ to be semisimple. But an issue similar to condition \ref{condition 2} prevents us from doing so.
    \item
     We do not know whether it is possible for focusing to happen in our set-up. To put it differently, it might be possible that the constant $c$ is equal to $1$.
\end{itemize}

Now let us briefly discuss our approach.

We are going to attack this counting problem via equidistribution results as in \cite{DukRudSar93}. For the approach using height zeta function, the reader is referred to \cite{ChamTschin10,ChamTschin12,TakTsch13}. However rather than showing certain families of measures converges to the full Haar measure on $\rmG/\Gamma$, we shall be content with showing  
\begin{equation*}
    \lim_{R \to \infty}\frac{1}{\mu_{\rmG/\rmH}(B_R)} \int_{B_R} g_*\mu_{\rmH} \,\mu_{\rmG/\rmH}([g])\text{ exists. } 
\end{equation*}

Note that this is really a double integral.
In previous work one usually only take advantage of only one of the integral.

When $\rmH$ is very large (say, maximal or in the setting of \cite{GoroTaklTschin15} and \cite{Yang18Rational}), one choose to show that for generic sequences, the inner integral always converges to the $\rmG$-invariant measure $\mu_{\rmG}$.

In the other extreme, when $\rmH$ is trivial, one choose to study the outer integral using spectral methods or mixing. For example see \cite{Maucou07,GoroNevo12Crelle}. However, their approach only works for certain norms that are stable under perturbation on both sides, a property which does not hold in our set-up.

Once this is done, one has the number of integer points is, up to an implicit constant, asymptotic to the volume, which one can compute via the method of \cite{ChamTschin10}.

The novelty of the present paper is to, in addition to \cite{EskMozSha96}, put into use of the result of \cite{ChamTschin10} to analyze the double integral above.

\subsection*{Outline}

In Section \ref{secBigEquiCount}, we translate the counting problem to an equidistribution problem mentioned above. 

In Section \ref{secBigCompatfy}, we recall the equidistribution result of \cite{EskMozSha96}, using which we define a compactification of $\rmG/\rmH$ inside $\Prob(\rmG/\Gamma)$. 
The idea of compactifying homogeneous spaces in the space of probability measures is not new, for instance it had appeared in Furstenberg's work \cite{Fursten63}, but our usage is very different.
When $\rmH$ satisfies condition \ref{condition 1}, the boundary decomposes into finitely many $\rmG$-orbits. Though this is only a topological object, we show that there is an algebraic one that covers it assuming additionally condition \ref{condition 2}. 

In Section \ref{secBigEquiOnVarie}, we recall the equidistribution result of \cite{ChamTschin10}. To use their theorem, we apply an equivariant resolution of singularities first and then verify that the Haar measure on $\rmG/\rmH$ and the height function from Euclidean norm do arise from the construction of their paper. 

Finally in Section \ref{secWrapUp} we wrap up everything and complete the proof.

\subsection*{Acknowledgement}
We are grateful to useful discussions with Jinpeng An, Osama Khalil, Fei Xu and Pengyu Yang.

\section{Equidistribution and counting}\label{secBigEquiCount}

\subsection{Convention}
Let $\bmG$ be a linear algebraic group over $\Q$ and $\bmH$ be a connected subgroup over $\Q$. 
The Roman letter $\rmH$ is reserved for $\bmH(\R)$.
Let $\Gamma$ be an arithmetic lattice. 
Let $\mu_{\rmH}$ be the Haar measure supported on $\rmH\Gamma/\Gamma$. We always assume $\mu_{\rmH}$ is finite and normalize it to be a probability measure. Similarly, $\mu_{\rmH^{\circ}}$ denotes the probability measure supported on $\rmH^{\circ}\Gamma/\Gamma$. We also let $\wtmu_{\rmH}$ denote the Haar measure of $\rmH$ whose restriction to the fundamental domain of $\rmH/\rmH\cap \Gamma$ coincide with $\mu_{\rmH}$. Then let $\mu_{\rmG/\rmH}$ be the unique $\rmG$-invariant locally finite measure on $\rmG/\rmH$ such that
for all continuous compactly supported function $f$ on $\rmG$, one has
\begin{equation*}
    \int f(x) \wtmu_{\rmG}(x)
    =\int \int f(gh)\wtmu_{\rmH}(h) \,\mu_{\rmG/\rmH}([g]).
\end{equation*}

\subsection{Counting follows from equidistribution}\label{secEquiImplyCount}

Given a continuous transitive $\rmG$-action on a topological space $X$ and $x_0\in X$ such that $g \mapsto g\cdot x_0$ gives a homeomorphism from $\rmG/\rmH$ to $X$. For a proper function $l: X \to [0,\infty)$, we let 
\begin{equation*}
\begin{aligned}
        B_R:=&\left\{
    x \in X\;\vert\; l(x)\leq R
    \right\}.
\end{aligned}
\end{equation*}
As $R$ tends to $\infty$, we ask whether
\begin{equation*}
    \lim_{R\to \infty} \frac{
    |B_R\cap \Gamma\cdot x_0|}{\mu_{\rmG/\rmH}(B_R)} 
\end{equation*}
exists and what it is equal to. By abuse of notation we also identify $B_R$ as a subset in $\rmG/\rmH$ via the orbit map $g\mapsto g\cdot x_0$.

We shall be concerned with situations where $\{B_R\}$ enjoys the following properties:
\begin{itemize}
    \item for any $0<\ep<1$, there exists $U_{\ep}$, a neighborhood of identity in G, such that $B_{(1-\ep)R}\subset gB_R\subset B_{(1+\ep)R}$  for all $R\geq 1$ and $g\in U_{\ep}$;
    \item there exists constants $a,b,c_1,c_2$ with $c_1,c_2>0$ such that 
    \begin{equation*}
        c_1\leq \lim \frac{\mu_{\rmG/\rmH}(B_R)}{R^a(\ln{R})^b}\leq c_2
    \end{equation*}
\end{itemize}
where the $\lim$, when it does not exist, represents a set.
For simplicity, we say that the family $\{B_R\}$ is \textbf{nice} if the above holds. Note that if $\{B_R\}$ is nice, then $\{g\cdot B_R\}$ is also nice for every $g\in\rmG$.
The first condition is equivalent to that
\begin{equation*}
     B_{(1-\ep)R} \subset \bigcap_{g\in U_{\ep}} g B_R \subset
     \bigcup_{g\in U_{\ep}} g B_R \subset B_{(1+\ep)R}.
\end{equation*}
Thus our condition is strictly stronger than the well-roundedness condition in \cite{EskMcM93}, as the second condition is not satisfied when the distance function is induced from the natural Riemannian metric on the associated symmetric space.

\begin{prop}\label{propEquiImplyCount}
Assume $\{B_R\}$ is nice and 
\begin{equation}\label{EquiAveHomoMeas}
\lim_{R\to\infty} \frac{1}{\mu_{\rmG/\rmH}(B_R)} \int_{B_R} g_*\mu_{\rmH} \,\mu_{\rmG/\rmH}([g])
\quad \text{exists}. 
\end{equation}
If we denote the limit measure by $\mu_{\infty}$, then
\begin{itemize}
    \item $\mu_{\infty}$ is absolutely continuous with respect to $\mu_{\rmG}$;
    \item the Radon--Nikodym derivative can be represented by a strictly positive continuous function $f_{\infty}$;
    \item and this function satisfies
    \begin{equation*}
        \lim_{R\to \infty} \frac{
        |B_R\cap g\Gamma\cdot x_0|}{\mu_{\rmG/\rmH}(B_R)} = f_{\infty}([g]).
    \end{equation*}
\end{itemize}
\end{prop}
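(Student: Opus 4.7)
The plan is to run an approximate-identity argument on $\rmG/\Gamma$ in the spirit of Eskin--McMullen, translating the double-integral equidistribution hypothesis into pointwise convergence of the counting function $F_R([g]) := |B_R \cap g\Gamma \cdot x_0|/\mu_{\rmG/\rmH}(B_R)$.

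First I would unfold the hypothesis into a statement about $F_R$. Writing $|B_R \cap g\Gamma \cdot x_0| = \sum_{\gamma \in \Gamma/\Gamma \cap \rmH}\mathbbm{1}_{B_R}(g\gamma\rmH)$, lifting the integral from $\rmG/\Gamma$ to $\rmG/\Gamma \cap \rmH$ by unfolding the sum over $\gamma$, and then disintegrating along the fibration $\rmG/\Gamma\cap\rmH \to \rmG/\rmH$ with fiber $\rmH/\Gamma \cap \rmH$, I obtain for every $\phi \in C_c(\rmG/\Gamma)$ the identity
\[
\int_{\rmG/\Gamma} \phi\,F_R\,d\mu_\rmG \;=\; \frac{1}{\mu_{\rmG/\rmH}(B_R)}\int_{B_R}\langle g_*\mu_\rmH,\phi\rangle\,d\mu_{\rmG/\rmH}([g]).
\]
Hence the hypothesis is equivalent to $F_R\,d\mu_\rmG \to \mu_\infty$ in the weak-$*$ topology.

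Second I would exploit niceness to sandwich $F_R$ pointwise. From the identity $|B_R \cap g_0 g_1 \Gamma \cdot x_0| = |g_1^{-1}B_R \cap g_0 \Gamma \cdot x_0|$ together with the inclusion $B_{(1-\ep)R} \subset g_1^{-1}B_R \subset B_{(1+\ep)R}$ valid for $g_1$ in a symmetric $U_\ep$:
\[
\frac{\mu_{\rmG/\rmH}(B_{(1-\ep)R})}{\mu_{\rmG/\rmH}(B_R)}\,F_{(1-\ep)R}([g_0]) \;\leq\; F_R([g_0 g_1]) \;\leq\; \frac{\mu_{\rmG/\rmH}(B_{(1+\ep)R})}{\mu_{\rmG/\rmH}(B_R)}\,F_{(1+\ep)R}([g_0]).
\]
Averaging against a non-negative mass-$1$ continuous bump $\psi_\ep$ supported in a $U_\ep$-neighbourhood of $[g_0]$, the middle becomes $\int \psi_\ep F_R\,d\mu_\rmG$, which by the first step converges as $R\to\infty$ to $\int \psi_\ep\,d\mu_\infty$. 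The polynomial growth in the second niceness clause forces the prefactors to asymptote to $(1\pm\ep)^a \to 1$ as $\ep \to 0$; squeezing then identifies $\lim_R F_R([g_0])$ with $f_\infty([g_0]) := \lim_{\ep\to 0}\int\psi_\ep\,d\mu_\infty$. Continuity of $f_\infty$ is inherited from the pre-limits (convolution of a continuous bump against a measure is continuous) provided the squeeze is locally uniform in $[g_0]$; absolute continuity of $\mu_\infty$ with respect to $\mu_\rmG$ with density $f_\infty$ then follows by testing against $C_c$. Strict positivity comes from the sandwich combined with connectedness of $\rmG$: any zero of $f_\infty$ propagates to a neighbourhood and hence everywhere, contradicting $\int f_\infty\,d\mu_\rmG = 1$.

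The main technical obstacle arises when the niceness constants satisfy $c_1 < c_2$: the prefactors $\mu_{\rmG/\rmH}(B_{(1\pm\ep)R})/\mu_{\rmG/\rmH}(B_R)$ then only enjoy two-sided bounds with a persistent multiplicative defect $c_2/c_1$ that does not vanish as $\ep\to 0$, so the naive squeeze loses a multiplicative factor. I would handle this by a subsequential argument invoking the \emph{uniqueness} of the limit measure $\mu_\infty$ in the hypothesis: any two subsequences of $F_R([g_0])$ with distinct limits would, when integrated against an arbitrary $\phi \in C_c(\rmG/\Gamma)$, produce distinct weak-$*$ limit points of $F_R\,d\mu_\rmG$, contradicting the assumed convergence to the single measure $\mu_\infty$.
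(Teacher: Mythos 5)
Your overall strategy -- unfold the double integral to rewrite the hypothesis as weak-$*$ convergence of $F_R\,d\mu_\rmG\to\mu_\infty$, then use the niceness sandwich together with a bump-function average to squeeze $F_R([g_0])$ between quantities converging to a Lebesgue-type density of $\mu_\infty$ -- is essentially the route the paper takes in Step~1 of its proof. However, your treatment of the case $c_1<c_2$ contains a genuine gap. The claim that two subsequences of $F_R([g_0])$ with distinct limits ``would, when integrated against an arbitrary $\phi\in C_c(\rmG/\Gamma)$, produce distinct weak-$*$ limit points of $F_R\,d\mu_\rmG$'' is false: the single point $[g_0]$ is $\mu_\rmG$-null, so the value of $F_R$ there has no influence whatsoever on $\int\phi\,F_R\,d\mu_\rmG$, and pointwise oscillation of densities is perfectly compatible with weak-$*$ convergence of the measures. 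Uniqueness of $\mu_\infty$ cannot rescue a pointwise squeeze that has lost a multiplicative constant. (For what it is worth, the paper's own Step~1 also implicitly uses $\limsup_{\ep\to0}\liminf_{R\to\infty}\mu_{\rmG/\rmH}(B_{R/(1\pm\ep)})/\mu_{\rmG/\rmH}(B_R)=1$, which likewise requires $c_1=c_2$, i.e.\ that the limit in the definition of niceness actually exists; this is indeed what Theorem~\ref{thmAsymBalls} provides in the only case the proposition is applied.) A correct write-up should simply carry out the squeeze under the hypothesis that the limit exists, rather than invent a fix for a case the argument does not in fact cover.

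Two further points. First, the displayed identity $|B_R\cap g_0g_1\Gamma\cdot x_0|=|g_1^{-1}B_R\cap g_0\Gamma\cdot x_0|$ is wrong as stated (it computes to $|g_0^{-1}B_R\cap g_1\Gamma\cdot x_0|$); the correct form, used in the paper, is $|B_R\cap g_1g_0\Gamma\cdot x_0|=|g_1^{-1}B_R\cap g_0\Gamma\cdot x_0|$, with the perturbing element $g_1\in U_\ep$ acting on the left. Second, the phrase ``absolute continuity \ldots then follows by testing against $C_c$'' glosses over a necessary ingredient: to pass from pointwise convergence $F_R\to f_\infty$ to $\int\phi F_R\,d\mu_\rmG\to\int\phi f_\infty\,d\mu_\rmG$ you need $F_R$ to be locally uniformly bounded (which does follow from the sandwich once $\lim_R F_R([g_0])$ is known to exist, but should be said). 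The paper in fact avoids this route entirely, proving $\mu_\infty\ll\mu_\rmG$ in a self-contained Step~2 by comparing $\mu_\infty(E)$ with a Haar average $\frac{1}{\wtmu_\rmG(U_\ep)}\int_{U_\ep}u_*\mu_\infty(E)\,\wtmu_\rmG(u)$ and using that such an average annihilates $\mu_\rmG$-null sets; it then identifies $f_\infty$ with the Radon--Nikodym derivative afterwards in Step~3. Your alternative ordering can be made to work, but only once the uniform local bound is supplied.
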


\begin{proof}
\textit{Step 1, limit exists and is continuous.}\\
We first show that the limit in the third part exists. So take $[g_0]\in \rmG/\Gamma$.

For $0<\ep<1$, choose $U_{\ep}$ satisfying properties as in the definition of being nice. By shrinking to a smaller one we assume $U_{\ep}=U_{\ep}^{-1}$.

Then we choose $V'_{\ep}\subset V_{\ep}$ two families of open neighborhood of identity contained in $U_{\ep}$  such that 
\begin{itemize}
    \item the closure of $V'_{\ep}$ is contained in $V_{\ep}$;
    \item \begin{equation*}
        \lim_{\ep\to 0} 
        \frac{\mu_{\infty}(V_{\ep}'[g_0])}
        {\mu_{\infty}(V_{\ep}[g_0])} =
        \lim_{\ep\to 0} 
        \frac{\mu_{\rmG}(V_{\ep}'[g_0])}
        {\mu_{\rmG}(V_{\ep}[g_0])}
        =1.
    \end{equation*}
\end{itemize}
Then we choose a continuous function $f_{\ep}$ with 
$1_{V'_{\ep}[g_0]}\leq f_{\ep} \leq 1_{V_{\ep}[g_0]}$.
    
    We write
    \[
    \Phi_R([g]):=\frac{|B_R\cap g\Gamma \cdot x_0|}{\mu_{\rmG/\rmH}(B_R)} = \frac{1}{\mu_{\rmG/\rmH}(B_R)}\sum_{\gamma \in \Gamma/\Gamma\cap \rmH} 1_{B_R}(g\gamma\cdot x_0).
    \]
    Then we have
    \begin{equation}\label{EquaProofProp2.2}
    \begin{aligned}
         \limsup_R  \la \Phi_R, 1_{V_{\ep}'[g_0]} \ra_{\mu_{\rmG}} 
        &\leq 
        \lim_R  \la \Phi_R, f_{\ep} \ra_{\mu_{\rmG}}  \\
        &=  \int f_{\ep}(x) \mu_{\infty}(x)
        \leq \liminf_R \la \Phi_R, 1_{V_{\ep}[g_0]} \ra_{\mu_{\rmG}};\\
        \mu_{\infty} (V_{\ep}'[g_0])
        &\leq 
        \int f_{\ep}(x) \mu_{\infty}(x)
        \leq 
        \mu_{\infty} (V_{\ep}[g_0])
    \end{aligned}
    \end{equation}
    where the equality on the first line comes from an unfolding argument (see \cite{DukRudSar93, EskMcM93}).
    Now we can start to estimate. Assume $\ep$ to be small enough so that the natural map $U_{\ep}\to U_{\ep}[g_0]$ is a homeomorphism. Then $\wtmu_{\rmG}\vert_{U_\ep}$ is identified with $\mu_{\rmG}\vert_{U_{\ep}[g_0]}$ under this homeomorphism.
    \begin{equation*}
        \begin{aligned}
                \la \Phi_R, 1_{V_{\ep}'[g_0]} \ra_{\mu_{\rmG}} =& 
                \frac{1}{\mu_{\rmG/\rmH}(B_R)} \int _{[g]\in V_{\ep}'[g_0]} |B_R\cap g\Gamma \cdot x_0| \,\mu_{\rmG}([g])\\
                =&\frac{1}{\mu_{\rmG/\rmH}(B_R)} \int _{g\in V_{\ep}'} 
                |g^{-1}B_R \cap g_0\Gamma\cdot x_0| \,\wtmu_{\rmG}(g)
                \\
                \geq&
               \frac{\mu_{\rmG}(V_{\ep}'[g_0])}
               {\mu_{\rmG/\rmH}(B_R)} 
               \Big\vert 
               (\bigcap_{g\in U_{\ep}} gB_R) \cap 
               g_0\Gamma\cdot x_0 \Big\vert
                \\
                \geq& \frac{\mu_{\rmG}(V_{\ep}'[g_0])}{\mu_{\rmG/\rmH}(B_R)} 
                \left\vert B_{(1-\ep)R}\cap g_0\Gamma\cdot x_0 \right \vert
        \end{aligned}
    \end{equation*}
    
    By taking the $\limsup$ as $R$ tends to $\infty$ and using Equation \ref{EquaProofProp2.2} above we get 
    \begin{equation*}
        \mu_{\rmG}(V_{\ep}'[g_0])\limsup 
        \frac{\left\vert  B_{(1-\ep)R} \cap g_0\Gamma\cdot x_0 \right\vert} {\mu_{\rmG/\rmH}(B_R)}
        \leq \limsup \la \Phi_R, 1_{V_{\ep}'[g_0]} \ra_{\mu_{\rmG} }
        \leq  \mu_{\infty} (V_{\ep}[g_0]).
    \end{equation*}
    Similarly we have 
    \begin{equation*}
         \mu_{\rmG}(V_{\ep}[g_0])\liminf 
        \frac{\left\vert  B_{(1+\ep)R} \cap g_0\Gamma\cdot x_0 \right\vert}
        {\mu_{\rmG/\rmH}(B_R)}
        \geq  \mu_{\infty} (V'_{\ep}[g_0]).
    \end{equation*}
    Combining these two while replacing $(1-\ep)R$ by $R$ in the first inequality and $(1+\ep)R$ by $R$ in the second inequality, we get
    \begin{equation*}
    \begin{aligned}
            & \frac{\mu_{\infty}(V'_{\ep}[g_0])}{\mu_{\rmG}(V_{\ep}[g_0])} \liminf 
         \frac{\mu_{\rmG/\rmH}(B_{R/(1+\ep)})}{\mu_{\rmG/\rmH}(B_{R})}
         \leq
        \liminf 
        \frac{ \left\vert B_{R}\cap g_0\Gamma\cdot x_0 \right\vert}{\mu_{\rmG/\rmH}(B_{R})} \\
        \leq &
        \limsup 
        \frac{ \left\vert B_{R}\cap g_0\Gamma\cdot x_0 \right\vert}{\mu_{\rmG/\rmH}(B_{R})}
        \leq 
        \frac{\mu_{\infty}(V_{\ep}[g_0])}{\mu_{\rmG}(V'_{\ep}[g_0])}
        \limsup 
         \frac{\mu_{\rmG/\rmH}(B_{R/(1-\ep)})}{\mu_{\rmG/\rmH}(B_{R})}.
    \end{aligned}
    \end{equation*}
    Taking  $\limsup_{\ep\to 0}$ on the left end and recall the definition of being nice, we have
    \begin{equation*}
    \begin{aligned}
          &\limsup_{\ep\to0} \liminf_{R\to \infty} \frac{\mu_{\rmG/\rmH}(B_{R/(1+\ep)})}{\mu_{\rmG/\rmH}(B_R)}=1
        \\
                \implies& \limsup_{\ep\to 0} 
                \frac{\mu_{\infty}(V'_{\ep}[g_0])}{\mu_{\infty}(V_{\ep}[g_0])} \cdot
        \frac{\mu_{\infty}(V_{\ep}[g_0])}{\mu_{\rmG}(V'_{\ep}[g_0])} \cdot  
         \frac{\mu_{\rmG}(V'_{\ep}[g_0])}{\mu_{\rmG}(V_{\ep}[g_0])} =
         \limsup_{\ep \to 0} \frac{\mu_{\infty}(V_{\ep}[g_0])}{\mu_{\rmG}(V'_{\ep}[g_0])}
    \end{aligned}
    \end{equation*}
    which is equal to the $\limsup_{\ep \to 0}$ of the right end.
    Thus 
    \begin{equation*}
         \liminf 
        \frac{ \left\vert B_{R}\cap g_0\Gamma\cdot x_0 \right\vert}{\mu_{\rmG/\rmH}(B_{R})} 
        =
        \limsup 
        \frac{ \left\vert B_{R}\cap g_0\Gamma\cdot x_0 \right\vert}{\mu_{\rmG/\rmH}(B_{R})} \implies
          \lim 
        \frac{ \left\vert B_{R}\cap g_0\Gamma\cdot x_0 \right\vert}{\mu_{\rmG/\rmH}(B_{R})} \text{ exists}. 
    \end{equation*}
    Let us note that the above limit is equal to
    \begin{equation*}
         \lim_{\ep \to 0} \frac{\mu_{\infty}(V_{\ep}[g_0])}{\mu_{\rmG}(V_{\ep}[g_0])} 
         =   \lim_{\ep \to 0} \frac{\mu_{\infty}(V'_{\ep}[g_0])}{\mu_{\rmG}(V'_{\ep}[g_0])}.
    \end{equation*}
    Call this limit $f([g_0])$. 
    Now we have a function $f: \rmG/\Gamma \to \R$, which is continuous because for $u\in U_{\ep}$,
    \begin{equation*}
    \begin{aligned}
        f(u[g_0])= & \lim 
        \frac{ \left\vert B_{R}\cap ug_0\Gamma\cdot x_0 \right\vert}{\mu_{\rmG/\rmH}(B_{R})} =
         \lim 
        \frac{ \left\vert u^{-1}B_{R}\cap g_0\Gamma\cdot x_0 \right\vert}{\mu_{\rmG/\rmH}(B_{R})} \\
        \geq& \limsup  
        \frac{ \left\vert B_{R(1-\ep)}\cap g_0\Gamma\cdot x_0 \right\vert}{\mu_{\rmG/\rmH}(B_{R})} 
        = \limsup  
        \frac{ \left\vert B_{R}\cap g_0\Gamma\cdot x_0 \right\vert}{\mu_{\rmG/\rmH}(B_{R/(1-\ep)})},
    \end{aligned}
    \end{equation*}
    converging to $f[g_0]$ as $\ep \to 0$. A similar argument would show the limit is no larger than $f[g_0]$.
    Repeating the same argument it is not hard to see that for any two $g_1,g_0$ in $\rmG$, there exists a positive constant $c_{g_1,g_0}$ such that $f([g_1g_0])\geq c_{g_1,g_0}f([g_0])$. Therefore $f$ is either constantly equal to $0$ or strictly positive.
    \\
    
    \textit{Step 2, absolute continuity.}\\
    Now we switch our attention to $\mu_{\infty}$ and show that it is absolutely continuous with respect to $\mu_{\rmG}$.
    That is to say, we need to show $\mu_{\infty}(E)=0$ whenever $\mu_{\rmG}(E)=0$.

    Recall that $\wtmu_{\rmG}$ is the Haar measure on $\rmG$ whose restriction to a fundamental set gives $\mu_{\rmG}$.
    We first observe that if a set $E$ has measure $0$ with respect to $\mu_{\rmG}$, then for any probability measure $\lambda$ and any bounded nonempty open set $U$ in $\rmG$, we have
    \begin{equation*}
        \left(\int_{u\in U} u_*\lambda \,\wtmu_{\rmG}(u)
        \right)
        (E)=0.
    \end{equation*} Indeed by decomposing $E$ into smaller pieces and shrinking $U$, we assume that $U=U^{-1}$ and $E$ is the image of some $\widetilde{E}$ under the natural projection $\rmG\to \rmG/\Gamma$ and that the projection is injective when restricted to $U^{-1}\widetilde{E}$ . Using this bijection (onto $U^{-1}E$) we lift $\lambda\vert_{U^{-1}E}$ to $\widetilde{\lambda}$ supported on $U^{-1}\widetilde{E}$. Moreover, we require that for each $e \in E$, the map from $U$ to $\rmG/\Gamma$ via $u\mapsto ue$ is injective.

    Now by interchanging the order of integration,
    \begin{equation*}
    \begin{aligned}
           \int_{u\in U}  u_*\lambda(E)\, \wtmu_{\rmG}(u)
           &= 
           \int_{u\in U}\int_{\rmG/\Gamma}1_{u^{-1}E}(x)  \,\lambda(x) \wtmu_{\rmG}(u)\\
           &=
           \int_{u\in U}  \int_{\rmG}  1_{u^{-1}E}(x)  \,\widetilde{\lambda}(x) \wtmu_{\rmG}(u)\\
           &=\int\int 1_{\{(u,x)\in U\times \widetilde{E}  \,\vert \, ux\in \widetilde{E}  \}} (u,x) \,\wtmu_{\rmG}(u)\widetilde{\lambda}(x)\\
           &=\int  \wtmu_{\rmG}(\widetilde{E}x^{-1}) \,\widetilde{\lambda}(x) =0,
    \end{aligned}
    \end{equation*}
    which confirms our observation.
    
    On the other hand, for an $E$ with $\mu_{\rmG}(E)=0$, we can find a family of shrinking measurable sets $(E_i)$ such that $E=\cap E_i$ and $\mu_{\infty}(\partial E_i)=0$. For $u\in \rmG$,
    \begin{equation*}
    \begin{aligned}
         \mu_{\infty}(E_i) &= \lim_R \frac{1}{\mu_{\rmG/\rmH}(B_R)}\int_{[g]\in B_R}g_*\mu_{\rmH} (E_i)\,\mu_{\rmG/\rmH}([g]),\\
          u_*\mu_{\infty}(E_i) &= \lim_R \frac{1}{\mu_{\rmG/\rmH}(B_R)}\int_{[g]\in u\cdot B_R}g_*\mu_{\rmH} (E_i)\,\mu_{\rmG/\rmH}([g]).
    \end{aligned}
    \end{equation*}
    The following difference can be estimated by 
    \begin{equation*}
    \begin{aligned}
        &\left| \mu_{\infty}(E_i)- \frac{1}{\wtmu_{\rmG}(U_{\ep})}\int_{U_{\ep}} u_*\mu_{\infty}(E_i) \,\wtmu_{\rmG}(u) \right|\\
        \leq & \lim_{R\to\infty} \frac{1}{\wtmu_{\rmG}(U_{\ep})}\int_{ U_{\ep}}
        \frac{1}{\mu_{\rmG/\rmH}(B_R)} \int_{uB_R\Delta B_R} g_*\mu_{\rmH}(E_i) \,\mu_{\rmG/\rmH}([g])\wtmu_{\rmG}(u)\\
        \leq & \lim_{R\to\infty} \frac{1}{\wtmu_{\rmG}(U_{\ep})}
        \int_{ U_{\ep}}
        \frac{1}{\mu_{\rmG/\rmH}(B_R)} \int_{B_{(1+\ep)R}\setminus B_{(1-\ep)R}} g_*\mu_{\rmH}(E_i) \,\mu_{\rmG/\rmH}([g])\wtmu_{\rmG}(u)\\
        \leq & 
        \lim_{R\to\infty} \frac{\mu_{\rmG/\rmH}(B_{(1+\ep)R}\setminus B_{(1-\ep)R})}
        {\mu_{\rmG/\rmH}(B_R)} \leq \ep'
    \end{aligned}
    \end{equation*}
    for some $\ep'$, which is independent of $i$ and converges to $0$ as $\ep$ does so. So if we let $i$ tends to infinity, we get
    \begin{equation*}
         \left|\mu_{\infty}(E)- \frac{1}{\mu_{\rmG}(U_{\ep})}\int_{u\in U_{\ep}} u_*\mu_{\infty}(E) \wtmu_{\rmG}(u) \right| \leq \ep'.
    \end{equation*}
    However, by our observation, the measure of $E$ against ``Haar average'' of any probability measure is $0$. So we are left with
    \begin{equation*}
         \mu_{\infty}(E) \leq \ep'
    \end{equation*}
    for $\ep'$ arbitrarily small, which forces $\mu_{\infty}(E) =0$. This ends the proof of absolute continuity.
    
    \textit{Step 3, completing the proof.}\\
    Now we write $\mu_{\infty}= \psi \cdot \mu_{\rmG}$ for some non-negative function $\psi$ in $L^1(\mu_{\rmG})$.
    
    By what has been shown in Step 1, for any $[g_0]\in \rmG/\Gamma$,
    \begin{equation*}
        f([g_0])=
        \lim_{\ep \to 0}
        \frac{\mu_{\infty}(V_{\ep}[g_0])}{\mu_{\rmG}(V_{\ep}[g_0])}
        =
        \lim_{\ep \to 0}
        \frac{1}{\wtmu_{\rmG}(V_{\ep})} \int_{u\in V_{\ep}} \psi(u[g_0]) \wtmu_{\rmG}(u)
    \end{equation*}
    Therefore $f=\psi$ almost surely. As $\psi \mu_{\rmG}$ is a probability measure, we see that $f$ can not be the $0$ function. As said in step 1, this implies that $f$ is strictly positive.
\end{proof} 

\subsection{A reformulation}\label{secAReformulation}

We reformulate the required equidistribution assumption \ref{EquiAveHomoMeas} from Proposition \ref{propEquiImplyCount} in a form that will be actually proved later.

Let $\text{Ave}$ be the continuous map $\Prob(\Prob(\rmG/\Gamma)) \to \Prob(\rmG/\Gamma)$ defined by
\begin{equation*}
    \int f(x) \Ave(\lambda)(x) := 
    \int \left(\int f(x)\mu(x) \right) \lambda(\mu)
    \quad
    \forall f \in C_c(\rmG/\Gamma)
\end{equation*}
where both spaces are equipped with the weak-$*$ topology. Note that this makes sense as the map $\mu \mapsto \int f(x)\mu(x)$ is a bounded continuous map on $\Prob(\rmG/\Gamma)$ if $f$ is continuous and bounded. For a point $x \in \Prob(\rmG/\Gamma)$, we let $\delta_x$ be the probability measure supported on $\{x\}$.

\begin{prop}\label{propEquionProb}
In the set-up of Theorem \ref{thmCounting1}, the limit of the following
\begin{equation*}
    \frac{1}{\mu_{\rmG/\rmH}(B_R)}\int_{[g]\in B_R} g \delta_{\mu_{\rmH}} \,\mu_{\rmG/\rmH}([g])
\end{equation*}
exists in $\Prob(\Prob(\rmG/\Gamma))$.
\end{prop}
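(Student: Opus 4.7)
The plan is to view the integrand as a point mass: since $g\delta_{\mu_\rmH} = \delta_{g_*\mu_\rmH}$, the displayed measure equals the pushforward $\Psi_* \nu_R$, where $\Psi \colon \rmG/\rmH \to \Prob(\rmG/\Gamma)$ sends $[g]$ to $g_* \mu_\rmH$ and $\nu_R := \mu_{\rmG/\rmH}\vert_{B_R}/\mu_{\rmG/\rmH}(B_R)$ is the normalized restriction on $\rmG/\rmH$. To produce a limit in $\Prob(\Prob(\rmG/\Gamma))$ it therefore suffices to exhibit a compactification $\compactX$ of $\rmG/\rmH$ to which $\Psi$ admits a continuous extension $\overline{\Psi}$, together with a weak-$*$ limit $\nu_\infty$ of $\nu_R$ regarded as probability measures on $\compactX$. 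Continuity of pushforward then produces $\overline{\Psi}_* \nu_\infty$ as the desired limit.

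For the continuous extension of $\Psi$, I would invoke the Eskin--Mozes--Shah equidistribution theorem as recalled in Section \ref{secBigCompatfy}. Under Condition \ref{condition 1} there are only finitely many intermediate $\Q$-subgroups $\bmH \subsetneq \bmL \subseteq \bmG$, and for any divergent sequence $[g_n] \to \infty$ in $\rmG/\rmH$ any subsequential limit of $(g_n)_*\mu_\rmH$ is a homogeneous probability measure supported on some translate of $\rmL \Gamma/\Gamma$. This allows me to define $\compactX$ as the closure of the image of $\Psi$ inside $\Prob(\rmG/\Gamma)$; its boundary then decomposes into finitely many $\rmG$-orbits indexed by the allowable $\bmL$, and by construction $\Psi$ extends to a continuous surjection $\overline{\Psi}$ from a suitable topological compactification of $\rmG/\rmH$ onto $\compactX$.

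For the weak-$*$ convergence of $\nu_R$, I would apply the Chamber-Loir--Tschinkel result recalled in Section \ref{secBigEquiOnVarie}. After an equivariant resolution of singularities of the Zariski closure of $\rmG \cdot v_0$ in projective space one obtains a smooth projective $\rmG$-equivariant compactification with a simple normal crossing boundary divisor, and I would check that the Euclidean norm defines a geometric height in their sense and that $\mu_{\rmG/\rmH}$ arises as the log-type measure to which their theorem applies. Granting this, $\nu_R$ converges weakly to a probability measure concentrated on certain boundary strata of the resolution; Condition \ref{condition 2} is invoked to guarantee that these strata project cleanly onto $\rmG$-orbits in $\compactX$, rather than being smeared across pieces coming from possibly disconnected intermediate subgroups, and this yields a well-defined weak-$*$ limit $\nu_\infty$ on $\compactX$.

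The main obstacle I anticipate is precisely this matching between the algebraic boundary produced by Chamber-Loir--Tschinkel and the measure-theoretic boundary assembled from Eskin--Mozes--Shah: the former is a divisorial object attached to the chosen smooth compactification, while the latter is parametrized by intermediate subgroups and the focusing measures they support, and one must show that each boundary divisor contributes mass to the correct homogeneous measure on $\compactX$. Once this identification is in place, the conclusion follows from continuity of pushforward in the weak-$*$ topology: $\Psi_*\nu_R = \overline{\Psi}_*\nu_R \to \overline{\Psi}_*\nu_\infty$ in $\Prob(\Prob(\rmG/\Gamma))$, establishing the proposition.
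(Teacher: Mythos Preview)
Your overall strategy coincides with the paper's: interpret the displayed measure as a pushforward $\Psi_*\nu_R$, obtain the weak-$*$ limit of $\nu_R$ on a smooth equivariant compactification via Chamber-Loir--Tschinkel, and then push forward along a continuous extension of $\Psi$ to the closure $X_0$ of $\Psi(\rmG/\rmH)$ in $\Prob(\rmG/\Gamma)$, whose orbit structure is governed by Eskin--Mozes--Shah. You have also correctly isolated the main difficulty, namely constructing the continuous map from the algebraic compactification to $X_0$.

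The one concrete device the paper supplies, and which your sketch does not, is how that obstacle is actually overcome. Resolving the Zariski closure of $\rmG\cdot v_0$ alone is not enough to produce a continuous map to $X_0$: a convergent sequence in (a resolution of) $\overline{\overline{\rmG\cdot v_0}}$ need not determine which intermediate subgroup $\bmL$ the limiting homogeneous measure $\mu_{\rmL}$ comes from. The paper's solution is to first build an auxiliary algebraic compactification $\bmX_1$ by choosing, for \emph{every} intermediate $\Q$-subgroup $\bmL$ between $\bmH$ and $\bmG$, a vector $v_{\bmL}$ with stabilizer $\bmL$, and taking the closure of the diagonal orbit map $[g]\mapsto \bigoplus_{\bmL}[g v_{\bmL}:1]$ inside $\bigoplus_{\bmL}\bbP(\bmV_{\bmL}\oplus\Q)$. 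Convergence in $\bmX_1$ then records, for each $\bmL$, whether $g_n v_{\bmL}$ stays bounded; the minimal such $\bmL$ (which exists by a short intersection argument) pins down the limit of $(g_n)_*\mu_{\rmH}$ via Eskin--Mozes--Shah, yielding a well-defined continuous $\pi_0:X_1\to X_0$. One then resolves $\bmX_1$ (after enlarging it to dominate $\bmX_{v_0}$ so that the Euclidean height still pulls back), and the rest of your argument goes through verbatim. A small correction on the role of the hypotheses: the finiteness of intermediate subgroups and the continuity of $\pi_0$ rest on Condition~\ref{condition 1}; Condition~\ref{condition 2} enters earlier, to ensure that translates of $\mu_{\rmH}$ (rather than merely $\mu_{\rmH^\circ}$) converge to the full $\mu_{\rmL}$.
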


\begin{proof}[Proof of Theorem \ref{thmCounting1} assuming Proposition \ref{propEquionProb}]
By Proposition \ref{propEquiImplyCount}, it suffices to show that the Equation \ref{EquiAveHomoMeas} holds. But by Proposition \ref{propEquionProb} and the fact that $\Ave$ is continuous we have that 
\begin{equation*}
    \lim_{R\to \infty}
    \Ave
    \left(
     \frac{1}{\mu_{\rmG/\rmH}(B_R)}\int_{[g]\in B_R} g \delta_{\mu_{\rmH}} \,\mu_{\rmG/\rmH}([g])
    \right) \text{ exists}
\end{equation*}
which is nothing but Equation \ref{EquiAveHomoMeas}.
\end{proof}

The proof of Proposition \ref{propEquionProb} will be delegated to Section \ref{secWrapUp}.

\section{Compactifications}\label{secBigCompatfy}

\subsection{Convention}
Throughout this section we make the following assumption unless otherwise noted.
Take $\bmH\leq \bmG$ to be connected reductive groups defined over $\Q$ and assume that $\bmH$ satisfies both condition \ref{condition 1} and \ref{condition 2}. Let $\Gamma$ be an arithmetic lattice in $\rmG$ and assume $\rmH\cap \Gamma$ is a lattice in $\rmH$.

\subsection{Translates of homogeneous measures}
We need the following three inputs from the work of \cite{EskMozSha96, EskMozSha97}.

\begin{thm}\label{thmEMSequi1}
For any sequence $(g_n)$ in $\rmG$, there exists a subsequence $(g_{n_k})$, a connected $\Q$-subgroup $\bmL$ of $\bmG$ with no nontrivial $\Q$-characters, a bounded sequence $(\delta_n )$ in $\rmG$, $(\gamma_n)$ in $\Gamma$ and $(h_n)$ in $\rmH^{\circ}$ such that
\begin{enumerate}
    \item $g_n =\delta_n \gamma_n h_n$;
    \item $\lim_k \delta_{n_k}$ exists, which we denote by $\delta_{\infty}$;
    \item $\gamma_{n_k} \bmH \gamma_{n_k}^{-1} \subset \bmL$ for all $k$;
    \item $\lim_k (g_{n_k})_*\mu_{\rmH^{\circ}} = \delta_{\infty}\mu_{\rmL^{\circ}}$.
\end{enumerate}
\end{thm}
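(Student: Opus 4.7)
The plan is to combine Ratner's measure classification theorem with the Dani--Margulis linearization technique, following the overall strategy of \cite{EskMozSha96, EskMozSha97}.

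The first step is to pass to a subsequence along which $(g_n)_*\mu_{\rmH^\circ}$ converges in the weak-$*$ topology to some measure $\mu_\infty$ on $\rmG/\Gamma$, and then to verify that $\mu_\infty$ is a probability measure. Because $\rmH^\circ/\rmH^\circ\cap\Gamma$ has finite volume, $\bmH^\circ$ admits no non-trivial $\Q$-character, and so $\bmH^\circ$ is generated over $\R$ by its unipotent one-parameter subgroups together with a compact part. Choosing a nontrivial unipotent $\{u(t)\}\subset \bmH^\circ$ and disintegrating $\mu_{\rmH^\circ}$ over $u(t)$-orbits, Dani's quantitative non-divergence theorem applied along $(g_n)$ rules out escape of mass and yields that $\mu_\infty$ is a probability measure.

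The second step is to identify $\mu_\infty$ as algebraic. Averaging over $u(t)$ and invoking the Mautner phenomenon, $\mu_\infty$ is invariant under a nontrivial unipotent subgroup. Ratner's theorem then realizes $\mu_\infty$ as a convex combination of algebraic measures supported on closed orbits of $\Q$-subgroups, and, using that $(g_n)_*\mu_{\rmH^\circ}$ is ergodic under the corresponding pulled-back $\rmH^\circ$-action, one shows the limit is a single algebraic measure. Thus $\mu_\infty = y_\infty \mu_{\rmL^\circ}$ for some connected $\Q$-subgroup $\bmL\le \bmG$ without non-trivial $\Q$-characters and some $y_\infty \in \rmG$.

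The decisive step is linearization, converting the limit information into the conjugation relation $\gamma_n \bmH \gamma_n^{-1}\subset \bmL$. In the exterior power $V := \bigwedge^{\dim\bmH}\Lie(\bmG)$, the Plücker vector $p_\bmH$ spanning $\bigwedge^{\dim\bmH}\Lie(\bmH)$ is $\Q$-rational, and the $\Q$-subvariety $V_\bmL \subset V$ cut out by the condition that $\bmL$ stabilizes the line through $v$ parametrizes precisely those conjugates of $\bmH$ contained in $\bmL$. The convergence $g_n \rmH^\circ\Gamma/\Gamma \to y_\infty \rmL^\circ\Gamma/\Gamma$ translates, via the Dani--Margulis linearization lemma applied to the $\Gamma$-orbit of $p_\bmH$ in $V$, into the existence of elements $\gamma_n\in\Gamma$ with $\gamma_n\cdot p_\bmH \in V_\bmL$, i.e.\ $\gamma_n\bmH\gamma_n^{-1}\subset \bmL$. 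Factoring via the local transverse product structure to the orbit $y_\infty\rmL^\circ\Gamma/\Gamma$, one writes $g_n = \delta_n\gamma_n h_n$ with $h_n\in\rmH^\circ$ absorbing the internal ambiguity and $\delta_n$ confined to a bounded transversal; a further subsequence produces $\delta_n \to \delta_\infty$, and the final equality $\lim_k(g_{n_k})_*\mu_{\rmH^\circ} = \delta_\infty\mu_{\rmL^\circ}$ follows from uniqueness of the algebraic limit measure. The main obstacle is precisely this last step: controlling how $\Gamma$-orbits on $V$ approach $V_\bmL$ with integrality, which is exactly the content of the Dani--Margulis linearization lemma and requires careful bookkeeping to realize $\bmL$ as a rational subgroup and to bound the transversal factor $\delta_n$.
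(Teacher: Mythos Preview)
The paper does not give a proof of this theorem at all: it is quoted as one of ``three inputs from the work of \cite{EskMozSha96, EskMozSha97}'' and used as a black box. So your proposal is not to be compared against a proof in the paper, but against the original Eskin--Mozes--Shah argument, and on that score your outline is broadly faithful: weak-$*$ compactness, non-divergence, Ratner plus linearization to pin down $\bmL$ and extract the factorization.

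That said, there is a genuine gap in your non-divergence step. You write that since $\bmH^{\circ}$ has no non-trivial $\Q$-character it is ``generated over $\R$ by its unipotent one-parameter subgroups together with a compact part,'' and then choose a nontrivial unipotent $\{u(t)\}\subset \bmH^{\circ}$. This fails precisely in the cases the paper cares about most: a $\Q$-anisotropic but $\R$-isotropic maximal torus (the main running example satisfying Conditions~\ref{condition 1} and~\ref{condition 2}) has no $\Q$-characters, contains no unipotents, and is not compact over $\R$. Dani's unipotent non-divergence does not apply, and your argument for tightness of $\{(g_n)_*\mu_{\rmH^{\circ}}\}$ breaks down. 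In the paper's setting the correct mechanism for non-divergence (and simultaneously for the factorization $g_n=\delta_n\gamma_n h_n$ with $\delta_n$ bounded) is Theorem~\ref{thmEMSnondiverg1}: under Condition~\ref{condition 1} one has $(\bmZ_{\bmG}\bmH)^{\circ}\subset\bmH$, hence $\rmG=B\cdot\Gamma\cdot\rmH$ for a bounded set $B$, which yields the bounded $\delta_n$ directly without any appeal to unipotent dynamics inside $\bmH$. A secondary issue: the passage from ``$\mu_\infty$ is unipotent-invariant'' to ``$\mu_\infty$ is a single algebraic measure'' is not justified by ergodicity of the \emph{approximating} measures; one needs the linearization argument already at this stage to rule out a nontrivial convex combination, and again this requires care when $\bmH$ is a torus.
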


\begin{thm}\label{thmEMSequi2}
If $(\lambda_n)$ is a sequence in $\Gamma$ and $\bmL$ is a connected $\Q$-subgroup of $\bmG$ satisfying that
\begin{itemize}
    \item $\lambda_n \bmH \lambda_n^{-1}$ is contained in $\bmL$  for all $n$ and every proper connected $\Q$-subgroup $\bmL'$ of $\bmL$ only contains $\lambda_n \bmH \lambda_n^{-1}$ for at most finitely many $n$,
\end{itemize}
then $\lim_n \lambda_n \mu_{\rmH^{\circ}}= \mu_{\rmL^{\circ}}$.
\end{thm}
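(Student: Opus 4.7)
The plan is to derive Theorem~\ref{thmEMSequi2} as a consequence of Theorem~\ref{thmEMSequi1} via a subsequence/compactness argument whose crux is the identification of the limiting $\Q$-subgroup. First I would observe that $\bmL$ itself has no nontrivial $\Q$-characters: each $\lambda_n\bmH\lambda_n^{-1}$ is $\Q$-isomorphic to the reductive group $\bmH$ admitting $\rmH\cap\Gamma$ as a lattice, hence has no $\Q$-characters; were $\chi$ a nontrivial $\Q$-character of $\bmL$, the proper $\Q$-subgroup $\ker\chi$ would contain every $\lambda_n\bmH\lambda_n^{-1}$ and violate the minimality hypothesis. It follows that $\rmL^\circ\Gamma/\Gamma$ has finite volume; all the $\lambda_n\mu_{\rmH^\circ}$ are probability measures supported on this single closed orbit, so the family is tight on $\rmG/\Gamma$, and by weak-$*$ compactness it suffices to show every subsequential weak limit equals $\mu_{\rmL^\circ}$.

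For any such subsequential limit $\mu_\infty$, apply Theorem~\ref{thmEMSequi1} to $(\lambda_n)$ to obtain, along a further subsequence, a decomposition $\lambda_{n_k}=\delta_{n_k}\gamma_{n_k}h_{n_k}$ with $\delta_{n_k}\to\delta_\infty\in\rmG$, $\gamma_{n_k}\in\Gamma$, $h_{n_k}\in\rmH^\circ$, and a connected $\Q$-subgroup $\bmL'$ with no $\Q$-characters such that $\gamma_{n_k}\bmH\gamma_{n_k}^{-1}\subseteq\bmL'$ and $\mu_\infty=\delta_\infty\mu_{\rmL'^\circ}$. Using that $h_{n_k}$ normalizes $\bmH$,
\[
\lambda_{n_k}\bmH\lambda_{n_k}^{-1}=\delta_{n_k}\bigl(\gamma_{n_k}\bmH\gamma_{n_k}^{-1}\bigr)\delta_{n_k}^{-1}\subseteq\bmL\cap\delta_{n_k}\bmL'(\R)\delta_{n_k}^{-1}.
\]
The task is to identify $\mathrm{Ad}(\delta_\infty)\mathfrak{l}'$ with $\mathfrak{l}$. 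Working in the Grassmannian of $\dim\mathfrak{h}$-planes in $\mathfrak{g}$, the rational points $\mathfrak{p}_k:=\mathrm{Ad}(\lambda_{n_k})\mathfrak{h}$ all lie in $\mathfrak{l}$ and, for each $k$, also in the real subspace $\mathrm{Ad}(\delta_{n_k})\mathfrak{l}'\cap\mathfrak{l}$; these real subspaces converge to $W:=\mathrm{Ad}(\delta_\infty)\mathfrak{l}'\cap\mathfrak{l}\subseteq\mathfrak{l}$. If $W\neq\mathfrak{l}$, a standard finiteness-of-rational-points argument on the orbit variety $\bmG\cdot[\mathfrak{h}]$ inside the Grassmannian would produce a proper $\Q$-subgroup $\bmL''\subsetneq\bmL$ containing infinitely many $\lambda_{n_k}\bmH\lambda_{n_k}^{-1}$, contradicting the hypothesis.

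Once $\mathrm{Ad}(\delta_\infty)\mathfrak{l}'\supseteq\mathfrak{l}$ is secured, the reverse containment follows because $\delta_\infty\mu_{\rmL'^\circ}$ is supported in $\rmL^\circ\Gamma/\Gamma$ (forcing $\delta_\infty\in\rmL^\circ\Gamma$ and hence matching dimensions). The probability measure $\delta_\infty\mu_{\rmL'^\circ}$ is then supported on the single closed orbit $\rmL^\circ\Gamma/\Gamma$ and invariant under $\delta_\infty\rmL'^\circ\delta_\infty^{-1}=\rmL^\circ$, so by uniqueness of the probability Haar measure it equals $\mu_{\rmL^\circ}$, completing the argument.

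The main obstacle is precisely this linearization step: transferring the $\Q$-rationality of each $\lambda_{n_k}\bmH\lambda_{n_k}^{-1}$ through the \emph{real} (not $\Q$-rational) conjugation by $\delta_{n_k}$ into an honest $\Q$-subgroup constraint that the minimality hypothesis can reject. The underlying rationality/finiteness principle --- that $\Q$-points in a $\bmG$-orbit of a Grassmannian cannot accumulate near a proper $\R$-subvariety without being trapped in a proper $\Q$-subvariety along a subsequence --- is the essence of the linearization technique permeating the Eskin--Mozes--Shah framework.
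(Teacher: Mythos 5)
This result is cited verbatim from Eskin--Mozes--Shah; the paper does not prove it, so there is no ``paper's own proof'' to compare against. Your plan --- take a subsequential weak-$*$ limit, decompose via Theorem~\ref{thmEMSequi1} to get $\delta_\infty\mu_{\rmL'^{\circ}}$ with $\gamma_{n_k}\bmH\gamma_{n_k}^{-1}\subset\bmL'$, and then identify $\bmL'$ with $\bmL$ --- is a sensible route, and your opening step (ruling out nontrivial $\Q$-characters of $\bmL$, hence securing tightness on the single closed orbit $\rmL^{\circ}\Gamma/\Gamma$) is correct.

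The gap is precisely where you flag it, and it is real rather than a matter of exposition. You argue that the rational points $\mathfrak{p}_k=\Ad(\lambda_{n_k})\mathfrak{h}$ lie in the converging real subspaces $\Ad(\delta_{n_k})\mathfrak{l}'\cap\mathfrak{l}$ and invoke a ``standard finiteness-of-rational-points argument'' to manufacture a proper $\Q$-subgroup $\bmL''\subsetneq\bmL$ trapping infinitely many $\lambda_{n_k}\bmH\lambda_{n_k}^{-1}$. But the $\mathfrak{p}_k$ themselves need not converge (only the ambient real subspaces do, and even their limit $W$ may have jumped in dimension), and the principle you appeal to --- that $\Q$-points of a $\bmG$-orbit accumulating near a proper $\R$-subvariety must lie, along a subsequence, in a proper $\Q$-subvariety cutting out a subgroup --- is not a general fact one can quote. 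Making this step rigorous is exactly what the algebraic inputs of this section are for: from the containment of supports $\delta_\infty\rmL'^{\circ}\Gamma\subset\rmL^{\circ}\Gamma$ one should invoke Lemma~\ref{lemOneObtContainAnother} to replace $\bmL'$ by a $\Gamma$-conjugate sitting inside $\bmL$ with $\delta_\infty\in\rmL^{\circ}\Gamma$; and then the passage from the real conjugations $\delta_{n_k}'$ to an honest $\Q$-subgroup constraint uses the finite decomposition $\bmX(\bmH,\bmL')\cap\Gamma=(\rmL'\cap\Gamma)\cdot D\cdot((\bmZ_{\bmG}\bmH)^{\circ}\cap\Gamma)$ of Theorem~\ref{thmEMSfocusing}, a representation with stabilizer exactly $\bmL'$, and discreteness of $\Gamma$-orbits of rational vectors. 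This is the argument the paper deploys for the analogous identifications in Corollary~\ref{coroTranFullComponents} and Proposition~\ref{propAlgeConvImplyMeasConv} (see also the commented-out variant there), and without it the key step of your proof is unsupported.
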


The following, \cite[Lemma 5.1, Lemma 5.2]{EskMozSha96}, is the algebra behind the focusing criterion in \cite[Corollary 1.15]{EskMozSha96}.

\begin{thm}\label{thmEMSfocusing}
Let $\bmM$ and $\bmL$ be two reductive $\Q$-subgroups of $\bmG$. Let $\bmX(\bmM,\bmL):=\{g\in \bmG\,\vert\, g\bmM g^{-1} \subset \bmL\}$. Then there is a finite set $D\subset\bmX(\bmM,\bmL)\cap \Gamma $ such that 
$\bmX(\bmM,\bmL)\cap \Gamma  = \rmL \cap \Gamma   \cdot D \cdot (\bmZ_{\bmG}\bmM)^{\circ} \cap \Gamma$.
Also, there exists a finite set $D'\subset \bmX(\bmM,\bmL)$
such that $\bmX(\bmM,\bmL)=\bmL \cdot D'\cdot (\bmZ_{\bmG}\bmM)^{\circ}$.
\end{thm}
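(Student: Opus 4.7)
The plan is to establish the geometric decomposition $\bmX(\bmM,\bmL)=\bmL\cdot D'\cdot(\bmZ_{\bmG}\bmM)^{\circ}$ first and then lift it to the arithmetic statement using Borel--Harish-Chandra finiteness.

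For $g\in\bmX(\bmM,\bmL)$, the subgroup $g\bmM g^{-1}$ is a reductive subgroup of $\bmL$ abstractly isomorphic to $\bmM$. A Richardson-type rigidity theorem for reductive subgroups of reductive groups implies that such subgroups fall into only finitely many $\bmL$-conjugacy classes, so one can pick representatives $g_1,\dots,g_k\in\bmX(\bmM,\bmL)$ and write, for every $g$, $g\bmM g^{-1}=l\,g_i\bmM g_i^{-1}\,l^{-1}$ with $l\in\bmL$. Consequently $g_i^{-1}l^{-1}g\in N_{\bmG}(\bmM)$. Since $\bmM$ is reductive, $(\bmZ_{\bmG}\bmM)^{\circ}$ is the identity component of $N_{\bmG}(\bmM)$, and the finite quotient $N_{\bmG}(\bmM)/(\bmZ_{\bmG}\bmM)^{\circ}$ injects into $\mathrm{Out}(\bmM)$. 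Choosing coset representatives $w_1,\dots,w_m$ and setting $D':=\{g_iw_j\}$---which is contained in $\bmX(\bmM,\bmL)$ because each $w_j$ normalizes $\bmM$---yields the geometric decomposition.

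For the arithmetic statement I would consider the action of $\bmL\times(\bmZ_{\bmG}\bmM)^{\circ}$ on $\bmX(\bmM,\bmL)$ by $(l,z)\cdot g:=lgz$. By the previous paragraph this action has finitely many orbits, and on each orbit the stabilizer of a base point, being the intersection of $\bmL$ with a conjugate of $(\bmZ_{\bmG}\bmM)^{\circ}$, is algebraic and, after unwinding, reductive. Borel--Harish-Chandra finiteness then implies that the $(\rmL\cap\Gamma)\times((\bmZ_{\bmG}\bmM)^{\circ}\cap\Gamma)$-orbits on the $\Gamma$-points of any single $\bmL\times(\bmZ_{\bmG}\bmM)^{\circ}$-orbit are finite in number. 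Selecting one $\Gamma$-representative from each such orbit and uniting these over the finitely many $\bmL\times(\bmZ_{\bmG}\bmM)^{\circ}$-orbits produces the required finite set $D\subset\bmX(\bmM,\bmL)\cap\Gamma$.

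The hardest step will be arranging that the representatives $g_iw_j$ can be chosen to be $\Q$-rational, so that each of the finitely many $\bmL\cdot(\bmZ_{\bmG}\bmM)^{\circ}$-orbits meeting $\Gamma$ admits a $\Q$-rational base point to which Borel--Harish-Chandra can be applied. In the reductive setting this amounts to a finiteness statement for Galois cohomology of the centralizer, combined with finiteness of class numbers, from which the uniform finiteness of double cosets follows.
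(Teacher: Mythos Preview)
The paper does not prove this theorem; it is quoted from \cite[Lemma~5.1, Lemma~5.2]{EskMozSha96}. Your overall strategy---Richardson-type finiteness of conjugacy classes for the geometric decomposition, followed by Borel--Harish-Chandra finiteness for the arithmetic statement---is essentially the approach of that reference, so in spirit you are on the right track.

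There is, however, a concrete error in the geometric step. You claim that $(\bmZ_{\bmG}\bmM)^{\circ}$ is the identity component of $N_{\bmG}(\bmM)$ and that $N_{\bmG}(\bmM)/(\bmZ_{\bmG}\bmM)^{\circ}$ is finite and injects into $\mathrm{Out}(\bmM)$. This is false whenever $\bmM$ is non-abelian: $\bmM$ normalizes itself, so $\bmM\subset N_{\bmG}(\bmM)^{\circ}$, yet $\bmM$ is not contained in its own centralizer unless it is a torus. The correct assertion is that $N_{\bmG}(\bmM)/\bigl(\bmM\cdot\bmZ_{\bmG}(\bmM)\bigr)$ injects into $\mathrm{Out}(\bmM)$ and is therefore finite, whence $N_{\bmG}(\bmM)^{\circ}=\bmM\cdot(\bmZ_{\bmG}\bmM)^{\circ}$. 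The repair is straightforward: from $g_i^{-1}l^{-1}g\in N_{\bmG}(\bmM)$ write $g_i^{-1}l^{-1}g=w_j m z$ with $w_j$ among finitely many coset representatives, $m\in\bmM$, and $z\in(\bmZ_{\bmG}\bmM)^{\circ}$; since $g_iw_j\in\bmX(\bmM,\bmL)$ one has $(g_iw_j)m(g_iw_j)^{-1}\in\bmL$, and absorbing this factor into $l$ on the left yields $g\in\bmL\cdot(g_iw_j)\cdot(\bmZ_{\bmG}\bmM)^{\circ}$ as required.

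Your worry in the final paragraph about $\Q$-rationality of the representatives is overstated: any $\bmL\times(\bmZ_{\bmG}\bmM)^{\circ}$-orbit meeting $\Gamma$ already contains a $\Q$-point, namely the $\Gamma$-element itself, so Borel--Harish-Chandra is applied directly to the orbit through that point. What you should verify explicitly is that this orbit is closed (equivalently, that the stabilizer is reductive), and indeed the stabilizer at $g$ is open in $\bmZ_{\bmL}(g\bmM g^{-1})$, which is reductive since $g\bmM g^{-1}$ is a reductive subgroup of the reductive group $\bmL$.
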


We have used the same notation for a variety and its complex points and $\bmZ_{\bmG}\bmM$ means the centralizer of $\bmM$ in $\bmG$.

\begin{thm}\label{thmEMSnondiverg1}
Let $\bmL \leq \bmG$ be a reductive subgroup over $\Q$ and assume that $\bmZ_{\bmG}\bmL/\bmZ(\bmL)$ is $\Q$-anisotropic. Then there exists a bounded set $B\subset \rmG$ such that $\rmG=B\cdot \Gamma\cdot \rmL$.
\end{thm}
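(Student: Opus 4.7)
The plan is to reformulate the statement as the compactness of the double coset space $\Gamma\bs\rmG/\rmL$ and prove this by embedding $\bmG/\bmL$ in a $\bmG$-representation, applying Borel--Harish-Chandra finiteness, and using reduction theory together with the $\Q$-anisotropy hypothesis to preclude divergent directions.

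Since $\bmL$ is reductive, by Matsushima's theorem combined with Chevalley one chooses a rational representation $\rho\colon\bmG\to\GL(\bmV)$ over $\Q$ and a vector $v_0\in\bmV(\Q)$ whose $\bmG$-stabilizer is $\bmL$, and the orbit $\bmG v_0$ is Zariski-closed in $\bmV$. After clearing denominators we take $v_0\in\bmV(\Z)$ with $\Gamma$ preserving the lattice $\bmV(\Z)$. The orbit map descends to a proper $\bmG$-equivariant homeomorphism $\rmG/\rmL\xrightarrow{\sim}\rmG v_0\subset\bmV(\R)$, and since $\rmL$ fixes $v_0$ one checks directly that $\rmG=B\Gamma\rmL$ for some bounded $B\subset\rmG$ is equivalent to $\rmG v_0=B_1\Gamma v_0$ for some bounded $B_1\subset\rmG$. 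By Borel--Harish-Chandra, $\bmV(\Z)\cap\rmG v_0$ decomposes as a finite union of $\Gamma$-orbits, and absorbing the finite set of representatives into the constant further reduces the problem to the arithmetic cocompactness statement: there exists bounded $B_2\subset\rmG$ with $\rmG v_0=B_2\cdot(\bmV(\Z)\cap\rmG v_0)$.

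Suppose for contradiction this cocompactness fails. Then there is a sequence $w_n=g_nv_0\in\rmG v_0$ whose $\rmG$-orbit does not come within any fixed bounded neighbourhood of $\bmV(\Z)\cap\rmG v_0$. By reduction theory for $\Gamma\bs\rmG$, decompose $g_n=\sigma_n\gamma_n$ with $\gamma_n\in\Gamma$ and $\sigma_n=\omega_na_nk_n$ in a finite union of Siegel sets $\Sigma_i=\omega_i\cdot\rmA_i^+\cdot K$ attached to minimal $\Q$-parabolic subgroups $\bmP_i=\bmM_i\bmA_i\bmN_i$. The failure of cocompactness forces $a_n\in\rmA_i^+$ to exit every compact set along a direction in which $v_0$ has a strictly positive $\rmA_i$-weight (otherwise $\sigma_nv_0$ would remain bounded). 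Extracting a subsequential limit of $a_n/\Vert a_n\Vert$ yields a nontrivial $\Q$-rational cocharacter $\lambda$ along which $v_0$ has a strictly positive weight.

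The final step uses the closedness of $\bmG v_0$ together with the reductivity of $\bmL$ to show that, after an inner adjustment by an element of $\rmL$, the cocharacter $\lambda$ may be chosen inside $\bmZ_\bmG(\bmL)$, with nontrivial image in $\bmZ_\bmG(\bmL)/\bmZ(\bmL)$ (nontrivial because $\lambda$ does not stabilize $v_0$ and hence cannot lie in $\bmL$). This produces a nontrivial $\Q$-split one-parameter subgroup in $\bmZ_\bmG(\bmL)/\bmZ(\bmL)$, contradicting the $\Q$-anisotropy hypothesis. The principal obstacle is this last extraction, which translates a Siegel-set escape direction into a genuine $\Q$-split cocharacter of the centralizer quotient via a Hilbert--Mumford-type argument applied to the closed semistable orbit $\bmG v_0$.
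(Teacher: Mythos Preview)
The paper does not give its own proof of this statement; immediately after stating it, the paper records that the case where $\bmZ_{\bmG}\bmL$ is $\Q$-anisotropic is \cite[Theorem 1.3]{EskMozSha97} and that the general case follows from \cite[Theorem 1.7]{Zha19}. Those arguments go through the non-divergence machinery for unipotent (or polynomial) trajectories on $\Gamma\backslash\rmG$ and are not reduction-theoretic in the way you propose.

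Your outline has a genuine gap precisely at the step you flag as the principal obstacle, and an earlier step is already problematic. After writing $g_n=\sigma_n\gamma_n$ with $\sigma_n$ in a Siegel set, all that follows from $g_nv_0\notin B_n\cdot\Gamma v_0$ is that $\sigma_n$, hence $a_n$, is unbounded. Your assertion that the limiting direction of $a_n$ must give $v_0$ a strictly positive $\rmA_i$-weight is not justified: the relevant vector is $\gamma_nv_0$ (or $k_n\gamma_nv_0$), whose $\rmA_i$-weight decomposition varies with $n$, and boundedness of $\sigma_nv_0$ would in any case not contradict the hypothesis, since it is $\sigma_n\gamma_nv_0$ that must escape.

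The appeal to a Hilbert--Mumford-type argument to move $\lambda$ into $\bmZ_{\bmG}\bmL$ does not work as stated. Kempf's theorem produces a canonical destabilizing one-parameter subgroup for an \emph{unstable} point, and its compatibility with stabilizers comes from the uniqueness of the optimal parabolic. Here $\bmG v_0$ is closed, so $v_0$ is polystable and admits no destabilizing cocharacter at all; merely having a nonzero positive-weight component is far weaker and carries no information linking $\lambda$ to the centralizer of $\bmL$. Converting ``$a_n\to\infty$ in a Siegel domain'' into ``$\bmZ_{\bmG}\bmL/\bmZ(\bmL)$ contains a $\Q$-split torus'' is exactly the content of the theorem, and the known proofs obtain it by analysing how closed $\rmL$-orbits approach the cusps of $\Gamma\backslash\rmG$, using Dani--Margulis non-divergence rather than GIT alone.
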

When $\bmZ_{\bmG}\bmL$ is $\Q$-anisotropic, this is \cite[Theorem 1.3]{EskMozSha97}. In general this follows from \cite[Theorem 1.7]{Zha19}.

As we are going to concern with translates of $\mu_{\rmH}$ rather than $\mu_{\rmH^{\circ}}$, let us prove the following corollary.

\begin{coro}\label{coroTranFullComponents}
Same notation as in Theorem 3.1, 3.2. Moreover we have that 
\begin{itemize}
    \item $\lim_k g_{n_k} \mu_{\rmH} = \delta_{\infty}\mu_{\rmL}$;
    \item $\lim_n \lambda_n \mu_{\rmH} = \mu_{\rmL}$.
\end{itemize}
\end{coro}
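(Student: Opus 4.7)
The plan is to deduce Corollary 3.5 from Theorems 3.1 and 3.2 by decomposing $\mu_{\rmH}$ into connected components and reassembling via Condition 2. Write
\[
\mu_{\rmH}=\frac{1}{k_H}\sum_{f=1}^{k_H}(c_f)_*\mu_{\rmH^\circ},\qquad \mu_{\rmL}=\frac{1}{k_L}\sum_{f'=1}^{k_L}(d_{f'})_*\mu_{\rmL^\circ},
\]
where the $c_f\in\rmH$ (resp.\ $d_{f'}\in\rmL$) represent cosets in $\pi_{0}(\rmH\Gamma):=\rmH/\rmH^\circ(\rmH\cap\Gamma)$ (resp.\ $\pi_{0}(\rmL\Gamma):=\rmL/\rmL^\circ(\rmL\cap\Gamma)$). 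Since $\mu_{\rmH}$ is left $\rmH$-invariant and $h_n\in\rmH^\circ\subset\rmH$, one has $(g_n)_{*}\mu_{\rmH}=(\delta_n\gamma_n)_{*}\mu_{\rmH}$, so the first bullet reduces (after multiplying by $\delta_\infty$) to the second applied to $\lambda_n=\gamma_{n_k}$. Thus assume $\lambda_n\in\Gamma$ as in Theorem 3.2 and apply Condition 2 to the intermediate $\Q$-subgroup $\bmH\subset \lambda_n^{-1}\bmL\lambda_n\subset\bmG$: the map $h\mapsto \lambda_n h\lambda_n^{-1}\bmod\rmL^\circ$ is a surjective homomorphism $\pi_{0}(\rmH)\twoheadrightarrow\pi_{0}(\rmL)$. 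Because $\lambda_n$ conjugates $\rmH\cap\Gamma$ into $\rmL\cap\Gamma$, this descends to a surjective homomorphism $\bar\phi_n:\pi_{0}(\rmH\Gamma)\twoheadrightarrow\pi_{0}(\rmL\Gamma)$. After passing to a subsequence the finite data $\bar\phi_n=\bar\phi$ stabilizes; all its fibers have equal cardinality $k_H/k_L$, and I arrange representatives so that $\lambda_n c_f\lambda_n^{-1}\in d_{\bar\phi(f)}\rmL^\circ$ for every $n$.

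The heart of the proof is the componentwise convergence $(\lambda_n c_f)_*\mu_{\rmH^\circ}\longrightarrow (d_{\bar\phi(f)})_*\mu_{\rmL^\circ}$. Factor $\lambda_n c_f=d_{\bar\phi(f)}\,\tilde\ell_n^{(f)}\,\lambda_n$ with $\tilde\ell_n^{(f)}\in\rmL^\circ$; the claim becomes $(\tilde\ell_n^{(f)}\lambda_n)_*\mu_{\rmH^\circ}\to\mu_{\rmL^\circ}$. Theorem 3.2 gives $(\lambda_n)_*\mu_{\rmH^\circ}\to\mu_{\rmL^\circ}$, but $\tilde\ell_n^{(f)}\in\rmL^\circ$ need not be bounded, so the translation cannot be commuted through the weak limit directly. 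I apply Theorem 3.1 to the sequence $(\tilde\ell_n^{(f)}\lambda_n)$ in $\rmG$: along a further subsequence the pushforwards converge to some $\bar\delta_\infty\mu_{\bar\rmL^\circ}$ for a $\Q$-subgroup $\bar\bmL$. To identify this limit with $\mu_{\rmL^\circ}$, two observations are used: (a) every pushforward is supported in the closed orbit $\rmL^\circ\Gamma/\Gamma$, so by connectedness $\bar\delta_\infty\bar\bmL^\circ(\R)$ lies in a single $\rmL^\circ$-coset of $\rmL^\circ\Gamma$; (b) for every $\rmL^\circ$-invariant bounded continuous $\phi$, the identity $\phi\circ\tilde\ell_n^{(f)}=\phi$ gives $\int\phi\,\diff(\tilde\ell_n^{(f)}\lambda_n)_*\mu_{\rmH^\circ}=\int\phi\,\diff(\lambda_n)_*\mu_{\rmH^\circ}\to\int\phi\,\diff\mu_{\rmL^\circ}$, so the limit agrees with $\mu_{\rmL^\circ}$ on all $\rmL^\circ$-invariant test functions. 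Combined with the algebraic rigidity of the limit shape produced by Theorem 3.1, (a) and (b) force $\bar\bmL=\bmL$ and $\bar\delta_\infty\in\rmL^\circ$, whence $\bar\delta_\infty\mu_{\bar\rmL^\circ}=\mu_{\rmL^\circ}$.

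Summing the componentwise claim and invoking the surjectivity of $\bar\phi$ with equal fibers gives
\[
(\lambda_n)_*\mu_{\rmH}=\frac{1}{k_H}\sum_f (\lambda_n c_f)_*\mu_{\rmH^\circ}\longrightarrow \frac{1}{k_H}\sum_f (d_{\bar\phi(f)})_*\mu_{\rmL^\circ}=\frac{1}{k_L}\sum_{f'}(d_{f'})_*\mu_{\rmL^\circ}=\mu_{\rmL},
\]
as desired. The main obstacle will be the middle step: the minimality hypothesis for $(\lambda_n)$ from Theorem 3.2 does not obviously transfer to the perturbed sequence $(\tilde\ell_n^{(f)}\lambda_n)$, so rigorously pinning down $\bar\bmL=\bmL$ requires combining the support constraint, the matching on $\rmL^\circ$-invariants, and the algebraic classification of measure-theoretic limits packaged in Theorem 3.1.
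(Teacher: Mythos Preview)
Your overall architecture matches the paper's: reduce the first bullet to the second, decompose $\mu_{\rmH}$ over connected components, use Condition~\ref{condition 2} to get a surjection onto the component group of $\rmL$, prove componentwise convergence, and reassemble. The place where the argument breaks is exactly the step you flag as the main obstacle, and your proposed fix via (a) and (b) does not close it.

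Observation (b) is vacuous once you have (a). An $\rmL^{\circ}$-invariant bounded continuous function on $\rmG/\Gamma$ is constant on each $\rmL^{\circ}$-orbit; since $\rmL^{\circ}\Gamma/\Gamma$ is a single $\rmL^{\circ}$-orbit, \emph{every} probability measure supported there integrates such a function to the same value as $\mu_{\rmL^{\circ}}$ does. So (a) and (b) together only yield, after the manoeuvre of Lemma~\ref{lemOneObtContainAnother}, that a $\Gamma$-conjugate of $\bar{\bmL}$ sits inside $\bmL$. Nothing in (a), (b), or the mere shape $\bar\delta_{\infty}\mu_{\bar\rmL^{\circ}}$ of the limit rules out $\bar{\bmL}\subsetneq\bmL$.

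The paper closes this gap by a symmetry argument rather than by invariant test functions. It works with the sequence $\lambda_n x_n$ (your $d_{\bar\phi(f)}\tilde\ell_n^{(f)}\lambda_n$ differs from this only by a fixed left translate), applies Theorem~\ref{thmEMSequi1} to write $\lambda_n x_n=\delta_n'\gamma_n' h_n'$ with $\gamma_n'\in\Gamma$, $h_n'\in\rmH^{\circ}$, and $\gamma_n'\bmH\gamma_n'^{-1}\subset\bmM$ minimally, and then uses the support constraint together with Lemma~\ref{lemOneObtContainAnother} to arrange $\bmM\subset\bmL$. The crucial extra observation is that the identity $\lambda_n x_n=\delta_n'\gamma_n' h_n'$ is symmetric: it equally expresses $\gamma_n'\cdot(h_n'x_n^{-1})=\delta_n'^{-1}\lambda_n$, i.e.\ $\gamma_n'$ times an element of $\rmH$ equals a bounded element times $\lambda_n$. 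Running the same support-and-containment argument with the roles of $(\lambda_n,\bmL)$ and $(\gamma_n',\bmM)$ exchanged now uses the minimality hypothesis on $(\lambda_n,\bmL)$ to force $\dim\bmL\leq\dim\bmM$, hence $\bmM=\bmL$. Replacing your (b) by this role-reversal, the rest of your outline goes through.
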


\begin{proof}
It suffices to show the second one.

By assumption, $x \mapsto \lambda_n x \lambda_n^{-1}$ defines a homomorphism from $\rmH$ to $\rmL$, which maps $\rmH^{\circ}$ to $\rmL^{\circ}$. Thus it descends to a homomorphism $\bmc_{\lambda_n}: \rmH/\rmH^{\circ} \to \rmL/\rmL^{\circ}$. By our assumption \ref{condition 2}, this is surjective for all $n$. Also for $y \in \rmL/\rmL^{\circ}$, the notation $y\rmL^{\circ}\Gamma/\Gamma$ or $y\mu_{\rmL^{\circ}}$ makes sense.

Now we claim that fixing $y_0 \in \rmL/\rmL^{\circ}$, for any $x_n\in \bmc_{\lambda_n}^{-1}(y_0)$,
\subsubsection{Claim} $\lim_n \lambda_n x_n \mu_{\rmH^{\circ}} =y_0\mu_{\rmL^{\circ}}$.

\begin{proof}[Proof of the claim]
It suffices to show that for any infinite subsequence, there exists a further subsequence where the limit exists and is the expected one. Therefore we may feel free to pass to a further subsequence whenever needed.

We may assume that, by passing to a subsequence, 
$\lambda_n x_n =\delta_n'\gamma_n'h_n'$ with $\delta'_n$ converging to $\delta'$, $\gamma_n'\in \Gamma$ and $h_n'\in \rmH^{\circ}$. Moreover, there exists a connected $\Q$-subgroup $\bmM$ that contains $\gamma_n'\bmH\gamma_n'^{-1}$ for all $n$ and every proper connected $\Q$-subgroup of $\bmM$ contains at most finitely many  $\gamma_n'\bmH\gamma_n'^{-1}$. Hence $\lambda_n x_n \mu_{\rmH^{\circ}}$ converges to $\delta'\mu_{\rmM^{\circ}}$ by Theorem \ref{thmEMSequi2}.

On the other hand, as $\lambda_n x_n \rmH^{\circ} \lambda_n^{-1 }$ is contained in $y_0\rmL^{\circ}$, we have $\lambda_n x_n \rmH^{\circ}\Gamma \subset y_0\rmL^{\circ}\Gamma$ for all $n$. Thus $\delta'{\rmL^{\circ}}\Gamma$ is contained in $y_0\rmL^{\circ}\Gamma$. By the Lemma \ref{lemOneObtContainAnother} below, there exists $l_{\rmM}\in \rmL^{\circ}$ and $\gamma_{\rmM} \in \Gamma$ such that $\delta'=y_0l_{\rmM}\gamma_{\rmM}$ and $\gamma_{\rmM} \rmM^{\circ} \gamma_{\rmM}^{-1} \subset \rmL^{\circ}$. Hence 
\begin{equation*}
    \lambda_n x_n = (\delta_n'\delta'^{-1}y_0l_{\rmM})\cdot (\gamma_{\rmM}\gamma_n') \cdot (h_n').
\end{equation*}
Therefore, by replacing $\delta_n'$ with $\delta_n'\delta'^{-1}y_0l_{\rmM}$, $\gamma_n'$ with $\gamma_{\rmM}\gamma_n'$ and $\bmM$ with $\gamma_{\rmM} \bmM \gamma_{\rmM}^{-1}$ we may assume that 
\begin{itemize}
    \item $\delta_n'$ converges to $y_0l_{\rmM}$ for some $l_{\rmM} \in \rmL^{\circ}$;
    \item $\gamma_n'\bmH \gamma_n'^{-1} \subset \bmM \subset \bmL$ for all $n$.
\end{itemize}

By switching the role played by $\gamma_n'$ and $\lambda_n$, we sees that $\dim \bmM = \dim \bmL$ and so $\bmM=\bmL$. Hence the proof completes.

\end{proof}

Now we continue to prove the corollary.

\begin{equation*}
    \begin{aligned}
           \lambda_n \mu_{\rmH} 
           = \frac{1}{|\rmH/\rmH^{\circ}|} \sum_{x\in \rmH/\rmH^{\circ}} \lambda_n x\mu_{\rmH^{\circ}} 
           =  \frac{1}{|\rmH/\rmH^{\circ}|} \sum_{y\in \rmL/\rmL^{\circ}}\sum_{x\in c_{\lambda_n}^{-1}(y)} \lambda_n x\mu_{\rmH}. 
    \end{aligned}
\end{equation*}
By our claim for any $x_n \in c_{\lambda_n}^{-1}(y)$, $\lambda_n x_n \mu_{\rmH^{\circ}}$ converges to $y\mu_{\rmL^{\circ}}$. Therefore
\begin{equation*}
    \lim_{n\to \infty} \lambda_n \mu_{\rmH} = 
     \frac{1}{|\rmH/\rmH^{\circ}|} \sum_{y\in \rmL/\rmL^{\circ}} y \mu_{\rmL^{\circ}} = \mu_{\rmL}.
\end{equation*}
\end{proof}

\begin{lem}\label{lemOneObtContainAnother}
Let $\bmA, \bmB$ be two connected $\Q$-subgroup of $\bmG$ such that $A^{\circ}\Gamma$ and $B^{\circ}\Gamma$ are closed. Let $g,h\in \rmG$. Then $gA^{\circ}\Gamma \subset hB^{\circ}\Gamma$ implies that there exist $b\in B^{\circ}$ and $\gamma\in\Gamma$ such that
\begin{itemize}
    \item $g=hb\gamma$;
    \item $\gamma A \gamma^{-1}\subset B$.
\end{itemize}
\end{lem}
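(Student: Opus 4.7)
The plan is to first produce the factorization $g=hb\gamma$ for free from the hypothesis, reduce the second assertion to a statement about $\gamma$ only, and then bootstrap the desired inclusion from $A^{\circ}$ to the possibly disconnected $A$ using Zariski density.

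First I would observe that the coset $g\Gamma$ itself lies in $gA^{\circ}\Gamma\subset hB^{\circ}\Gamma$, so one can write $g=hb\gamma$ for some $b\in B^{\circ}$ and $\gamma\in \Gamma$. This settles the first bullet. Substituting this into $gA^{\circ}\Gamma\subset hB^{\circ}\Gamma$ and canceling $hb$ (using $b\in B^{\circ}$), the inclusion reduces to
\begin{equation*}
\gamma A^{\circ}\subset B^{\circ}\Gamma.
\end{equation*}

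The key step is to analyze the connected components of $B^{\circ}\Gamma$. Since $B^{\circ}$ is a closed subgroup of $\rmG$ (it is the identity component of the real points of the algebraic group $\bmB$) and $B^{\circ}\Gamma$ is closed by hypothesis, the decomposition
\begin{equation*}
B^{\circ}\Gamma=\bigsqcup_{[\gamma']\in (B^{\circ}\cap\Gamma)\bs\Gamma} B^{\circ}\gamma'
\end{equation*}
is a disjoint union of closed subsets, each of which is connected (it is a left translate of the connected group $B^{\circ}$). These must therefore be the connected components. Now $\gamma A^{\circ}$, being a continuous image of the connected space $A^{\circ}$, lies entirely in a single component; since $\gamma=\gamma\cdot e$ belongs to the component $B^{\circ}\gamma$, we get $\gamma A^{\circ}\subset B^{\circ}\gamma$, i.e.\ $\gamma A^{\circ}\gamma^{-1}\subset B^{\circ}$.

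Finally I would promote this to $\gamma A\gamma^{-1}\subset B$. Because $\bmA$ is a connected $\Q$-group, its identity component of real points $A^{\circ}$ is Zariski dense in $\bmA$ (it is open in $\bmA(\R)$, hence has the same Lie algebra as $\bmA$, which in turn determines the connected algebraic group $\bmA$ uniquely). The algebraic morphism $x\mapsto \gamma x\gamma^{-1}$ sends this Zariski-dense set into the Zariski-closed subgroup $\bmB$, hence sends $\bmA$ into $\bmB$; taking real points yields $\gamma A\gamma^{-1}\subset B$. I do not foresee a serious obstacle; the one point that deserves care is precisely this Zariski density argument, which needs the connectedness of $\bmA$ as an algebraic group (not merely of $A^{\circ}$ as a Lie group).
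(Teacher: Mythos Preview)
Your overall strategy is the same as the paper's, but there is a genuine gap in the ``key step.'' You assert that because
\[
B^{\circ}\Gamma=\bigsqcup_{[\gamma']} B^{\circ}\gamma'
\]
is a disjoint union of closed connected sets, these sets must be the connected components of $B^{\circ}\Gamma$. That inference is false in general: for instance $[0,1]=\bigsqcup_{x}\{x\}$ is a disjoint union of closed connected sets, yet $[0,1]$ is connected. What is missing is that the cosets $B^{\circ}\gamma'$ are also \emph{open} in $B^{\circ}\Gamma$. This is where the closedness hypothesis really enters, and it requires a Baire-type argument: $B^{\circ}\Gamma$, being closed in $\rmG$, is a Baire space written as a countable union of closed sets $B^{\circ}\gamma'$; hence one of them has nonempty interior in $B^{\circ}\Gamma$, and by right-translating by elements of $\Gamma$ and left-translating by elements of $B^{\circ}$ every coset is open.

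Once this gap is patched, your proof is correct and close in spirit to the paper's, though the execution differs slightly. The paper applies Baire directly to the group $\gamma A^{\circ}\gamma^{-1}$ (a Lie group, hence Baire), covered by the closed sets $B^{\circ}k\cap \gamma A^{\circ}\gamma^{-1}$; it finds one $k$ for which $B^{\circ}k$ contains a nonempty open subset of $\gamma A^{\circ}\gamma^{-1}$, uses Zariski density of that open set to get $\gamma A^{\circ}\gamma^{-1}\subset B^{\circ}k$, and then observes the identity forces $k\in B^{\circ}$. Your route instead establishes the component structure of $B^{\circ}\Gamma$ once and for all and then uses connectedness of $\gamma A^{\circ}$; this is arguably cleaner conceptually but, as noted, still rests on Baire. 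Your final promotion from $A^{\circ}$ to $A$ via Zariski density of $A^{\circ}$ in $\bmA$ is correct and matches the paper.
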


\begin{proof}
By assumption there are $b\in B^{\circ}$, $\gamma\in \Gamma$ such that $g=hb\gamma$.
So we have
\begin{equation*}
    hb\gamma A^{\circ}\Gamma \subset hB^{\circ}\Gamma
    \text{ which is equivalent to } \gamma A^{\circ} \gamma^{-1} \subset \bigcup_{k \in \Gamma} B^{\circ} k.
\end{equation*}
By Baire Category theorem there exists $k\in \Gamma$ such that $B^{\circ} k $ contains a nonempty open set in $A^{\circ}$, which is Zariski dense in $A^{\circ}$. Hence we must have $\gamma A^{\circ} \gamma^{-1} \subset B^{\circ} k$ for this $k \in \Gamma$. By observing that the left hand side contains the identity, we have $B^{\circ}k=B^{\circ}$. It then follows that $\gamma A\gamma^{-1}\subset B$.
\end{proof}

\subsection{Compactification in the space of measures}

\begin{defi}
Let $\Prob(\rmG/\Gamma)$ be the space of probability measures on $\rmG/\Gamma$ equipped with the weak-$*$ topology. We define
\begin{equation*}
    \begin{aligned}
            \iota_0 : \rmG/\rmH &\longrightarrow \Prob(\rmG/\Gamma) \\
            [g] &\longmapsto g_*\mu_{\rmH}
    \end{aligned}
\end{equation*}
and let $X_0:=\overline{\iota_0(\rmG/\rmH)}$ and $D_0:= X_0\setminus \iota_0(\rmG/\rmH)$.
\end{defi}

Note that $X_0$ is compact by Theorem \ref{thmEMSequi1} (the reader is reminded that since we are dealing with a possibly noncompact space, the space of probability measures may not be compact). This is always true as long as the centralizer of $\bmH$ in $\bmG$ is $\Q$-anisotropic and both $\bmH$, $\bmG$ are reductive.

\begin{defi}
\begin{equation*}
    \scrA:= \{\bmL\leq_{\Q} \bmG\,\vert\, \bmL \text{ connected and } \exists \text{ a sequence } (\gamma_n) \text{ in }\Gamma \text{ such that } \gamma_n \mu_{\rmH} \to \mu_{\rmL} 
    \}
\end{equation*}
and we define an equivalence relation on $\scrA$ by
\begin{equation*}
    \bmL \sim \bmL' \iff \exists \gamma \in \Gamma,\, \gamma \bmL \gamma^{-1} = \bmL'.
\end{equation*}
\end{defi}

The set $\scrA/{\sim}$ is finite due to the next lemma.
\begin{lem}\label{lemFiniteInterme}
Assume that $\bmL$ is a connected subgroup of some connected reductive $\Q$-group $\bmM$ and satisfies condition \ref{condition 1}, then there are only finitely many intermediate reductive groups between $\bmL$ and $\bmM$. Everything is over $\C$ here.
\end{lem}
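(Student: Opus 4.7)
The plan is to pass to the Lie algebra level and combine an infinitesimal rigidity statement with an application of Theorem \ref{thmEMSfocusing}. Via the standard bijection between connected subgroups of $\bmM$ and Lie subalgebras of $\frakm$, it suffices to show that the set of reductive Lie subalgebras $\frakl'$ with $\frakl \subseteq \frakl' \subseteq \frakm$ is finite. The starting observation is that the centralizer hypothesis propagates: for any such $\frakl'$,
\[
\frakz_{\frakm}(\frakl') \subseteq \frakz_{\frakm}(\frakl) \subseteq \frakl \subseteq \frakl',
\]
so $\frakz_{\frakm}(\frakl') = Z(\frakl')$, i.e.\ $\frakl'$ is itself self-centralizing in $\frakm$.

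The first step is to show that there are only finitely many $\bmM$-conjugacy classes of reductive subalgebras of $\frakm$ satisfying this self-centralizing property. The locus $\mathcal{R}$ of such $\frakl'$ is a constructible subset of the Grassmannian of $\frakm$ and hence of finite type. The first-order deformations of an inclusion $\frakl' \hookrightarrow \frakm$ modulo inner automorphisms are classified by $H^{1}(\frakl', \frakm/\frakl')$, and using complete reducibility together with Hochschild--Serre for $Z(\frakl') \triangleleft \frakl'$ one computes
\[
H^{1}(\frakl', \frakm/\frakl') \;\cong\; \mathrm{Hom}\bigl(Z(\frakl'),\, (\frakm/\frakl')^{\frakl'}\bigr) \;\cong\; \mathrm{Hom}\bigl(Z(\frakl'),\, \frakz_{\frakm}(\frakl')/Z(\frakl')\bigr),
\]
which vanishes by the self-centralizing property of $\frakl'$. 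Consequently every $\bmM$-orbit on $\mathcal{R}$ is Zariski open, and $\mathcal{R}/\bmM$ is finite.

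The second step bounds the number of intermediate subgroups inside a single conjugacy class. Fix a representative $\bmL'$ with $\bmL \subseteq \bmL'$; the conjugates $g\bmL' g^{-1}$ containing $\bmL$ correspond bijectively to right $\bmN_{\bmM}(\bmL')$-cosets in $\{g \in \bmM : g^{-1}\bmL g \subseteq \bmL'\}$. The complex-algebraic analog of Theorem \ref{thmEMSfocusing} applied to $\bmL \subseteq \bmL' \subseteq \bmM$ yields
\[
\{g \in \bmM : g\bmL g^{-1} \subseteq \bmL'\} \;=\; \bmL' \cdot D' \cdot (\bmZ_{\bmM}\bmL)^{\circ}
\]
for some finite $D' \subset \bmM$, and inverting, $\{g : g^{-1}\bmL g \subseteq \bmL'\} = (\bmZ_{\bmM}\bmL)^{\circ} \cdot (D')^{-1} \cdot \bmL'$. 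Modding on the right by $\bmN_{\bmM}(\bmL')$ absorbs the rightmost $\bmL'$ factor. For each fixed $d \in (D')^{-1}$, the inclusion $d^{-1}\bmL d \subseteq \bmL'$ gives $\bmL \subseteq d\bmL' d^{-1}$, and Condition \ref{condition 1} then produces
\[
(\bmZ_{\bmM}\bmL)^{\circ} \;\subseteq\; \bmL \;\subseteq\; d\bmL' d^{-1} \;\subseteq\; d\,\bmN_{\bmM}(\bmL')\,d^{-1},
\]
so all $z \in (\bmZ_{\bmM}\bmL)^{\circ}$ determine the same coset $zd\,\bmN_{\bmM}(\bmL')$. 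Hence at most $|D'|$ conjugates of $\bmL'$ in this class contain $\bmL$, and combined with the first step the total count is finite. The main delicate point is the cohomological vanishing in Step 1; the rest is formal manipulation around Theorem \ref{thmEMSfocusing} together with the self-centralizing hypothesis.
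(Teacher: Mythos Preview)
Your proof is correct, and your second step is essentially the paper's own argument: both apply Theorem~\ref{thmEMSfocusing} to write $\bmX(\bmL,\bmL')=\bmL'\cdot D'\cdot(\bmZ_{\bmM}\bmL)^{\circ}$ and then use $(\bmZ_{\bmM}\bmL)^{\circ}\subset\bmL\subset d\bmL'd^{-1}$ to kill the $(\bmZ_{\bmM}\bmL)^{\circ}$-factor, leaving only finitely many conjugates indexed by $D'$.

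The genuine difference is in Step~1. The paper does not invoke deformation theory; instead it quotes a lemma of Einsiedler--Margulis--Venkatesh (\cite[Lemma~A.1]{EinMarVen09}) giving finitely many $\bmG(\C)$-conjugacy classes of semisimple subgroups, and then observes that Condition~\ref{condition 1} forces the connected center $\bmZ$ of any intermediate $\bmA=\bmZ\cdot\bmS$ to be a \emph{maximal} torus in $\bmZ_{\bmG}\bmS$, hence determined up to conjugacy once $\bmS$ is. Your route---Richardson rigidity via $H^{1}(\frakl',\frakm/\frakl')=0$, computed through Hochschild--Serre for $Z(\frakl')\triangleleft\frakl'$ and the identification $(\frakm/\frakl')^{\frakl'}\cong\frakz_{\frakm}(\frakl')/Z(\frakl')$---is a legitimate alternative. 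It is more self-contained (no external classification lemma) and arguably more conceptual, but it imports heavier machinery. The paper's argument is shorter once the cited lemma is granted and stays at the group level throughout. Either way, the key structural input is the same: the self-centralizing property propagates upward, which is exactly your opening observation.
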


\begin{proof}
First we claim that up to $\bmG(\C)$-conjugacy, there are only finitely many subgroups (over $\C$) $\bmA$ satisfying condition \ref{condition 1} that is isomorphic to some fixed linear algebraic group $\bmA_0$. Indeed for each $\bmA$, we write $\bmA=\bmZ\cdot \bmS$ where $\bmZ$ is the connected center and $\bmS$ is a semisimple group. 
By \cite[Lemma A.1]{EinMarVen09}, there are only finitely many possibilities for $\bmS$ up to $\bmG(\C)$-conjugacy. So suppose $\bmA_i=\bmZ_i\cdot\bmS_i$ for $i=1,2$ are two groups isomorphic to $\bmA$ and $g\in \bmG(\C)$ is such that $g\bmS_1g^{-1}=\bmS_2$. 
Hence $g\bmZ_1g^{-1}\subset \bmZ_{\bmG}\bmS_2$. By condition \ref{condition 1}, we have that $g\bmZ_1g^{-1}$ and $\bmZ_2$ are both maximal tori in $\bmZ_{\bmG}{\bmS_2}$. So $hg\bmZ_1g^{-1}h^{-1}=\bmZ_2$ for some $h\in \bmZ_{\bmG}\bmS_2(\C)$. Hence $hg\bmA_1g^{-1}h^{-1}=\bmA_2$. This proves the claim.

Now fix $\bmA$ that is connected, reductive and contains $\bmL$. It suffices to show that the collection (denoted by $\scrB$) of $\bmB$ that is conjugate to $\bmA$ and contains $\bmL$ is finite. 
By Theorem \ref{thmEMSfocusing}, there exists a finite set $D\subset \bmX(\bmL,\bmA)$ such that $\bmX(\bmL,\bmA)=\bmA \cdot D \cdot (\bmZ_{\bmM}\bmL)^{\circ}$. 
For each $\bmB\in\scrB$, find $g$ such that $g\bmB g^{-1}=\bmA$. Then $g\in \bmX(\bmL,\bmA)$. So $g=adz$ for some $a\in \bmA$, $d\in D$ and $z\in (\bmZ_{\bmM}\bmL)^{\circ}$, which is contains in $\bmL$ by assumption. So $z$ is also contained in $d^{-1}\bmA d$. Hence $\bmB =  g^{-1}\bmA g = z^{-1} d^{-1} \bmA d z= d^{-1}\bmA d$. This finishes the proof.

\end{proof}

We can describe $X_0$ rather explicitly thanks to Corollary \ref{coroTranFullComponents} and Lemma \ref{lemOneObtContainAnother}. And Lemma \ref{lemFiniteInterme} above says that the disjoint union is finite.
\begin{coro}\label{coroBoundStrataProb}
As a set $X_0= \bigsqcup_{[\bmL]\in \scrA/\sim} \rmG\cdot \mu_{\rmL}$.
\end{coro}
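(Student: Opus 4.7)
The plan is to prove two inclusions and then disjointness. For the forward inclusion $\bigcup_{[\bmL]}\rmG\cdot\mu_{\rmL}\subseteq X_0$, note that each $\bmL\in\scrA$ gives a sequence $\gamma_n\in\Gamma$ with $\gamma_n\mu_{\rmH}\to\mu_{\rmL}$; since every $\gamma_n\mu_{\rmH}=\iota_0(\gamma_n\rmH)$ lies in $\iota_0(\rmG/\rmH)$, we have $\mu_{\rmL}\in X_0$, and the continuous $\rmG$-action on $\Prob(\rmG/\Gamma)$ preserves the closed set $X_0$, so $\rmG\cdot\mu_{\rmL}\subseteq X_0$.

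For the reverse inclusion $X_0\subseteq \bigcup_{[\bmL]}\rmG\cdot\mu_{\rmL}$, pick any $\mu\in X_0$ and a sequence $(g_n)$ in $\rmG$ with $\iota_0([g_n])=(g_n)_{*}\mu_{\rmH}\to \mu$. Apply Theorem \ref{thmEMSequi1} and the first bullet of Corollary \ref{coroTranFullComponents} to extract a subsequence $(g_{n_k})$ with $g_{n_k}=\delta_{n_k}\gamma_{n_k}h_{n_k}$, $\delta_{n_k}\to\delta_\infty$, $\gamma_{n_k}\bmH\gamma_{n_k}^{-1}\subseteq\bmL$, and $(g_{n_k})_{*}\mu_{\rmH}\to\delta_\infty\mu_{\rmL}$. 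Then the minimality clause of Theorem \ref{thmEMSequi1} together with Theorem \ref{thmEMSequi2} and the second bullet of Corollary \ref{coroTranFullComponents} gives $\gamma_{n_k}\mu_{\rmH}\to\mu_{\rmL}$, so $\bmL\in\scrA$ and $\mu=\delta_\infty\mu_{\rmL}\in\rmG\cdot\mu_{\rmL}$.

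The main technical step is the disjointness: if $g\mu_{\rmL}=g'\mu_{\rmL'}$ for $\bmL,\bmL'\in\scrA$ and $g,g'\in\rmG$, I want to show $\gamma\bmL\gamma^{-1}=\bmL'$ for some $\gamma\in\Gamma$. Equality of supports gives $g\rmL\Gamma/\Gamma=g'\rmL'\Gamma/\Gamma$. Using that the map $\rmL/(\rmL\cap\Gamma)\to g\rmL\Gamma/\Gamma$ is a homeomorphism (because $\rmL^\circ\Gamma/\Gamma$ is closed, hence $\rmL\Gamma/\Gamma$ is closed), the connected component of $g\Gamma$ in this common support is both $g\rmL^\circ\Gamma/\Gamma$ and $g'y\rmL'^{\circ}\Gamma/\Gamma$ for some representative $y$ of a coset of $\rmL'^\circ$ in $\rmL'$. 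Lifting back, $g\rmL^\circ\Gamma=g'y\rmL'^\circ\Gamma$, and Lemma \ref{lemOneObtContainAnother} produces $\gamma\in\Gamma$ with $\gamma\bmL\gamma^{-1}\subseteq\bmL'$. Swapping the roles of $(\bmL,g)$ and $(\bmL',g')$ yields an opposite inclusion, forcing $\dim\bmL=\dim\bmL'$; since both are Zariski-connected algebraic subgroups of the same dimension, $\gamma\bmL\gamma^{-1}=\bmL'$ as required.

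The main obstacle I expect is the bookkeeping in the last step: correctly identifying the connected components of $g\rmL\Gamma/\Gamma$ as shifted orbits of $\rmL^\circ$, so that Lemma \ref{lemOneObtContainAnother} (which is stated only for identity components) can be applied cleanly. Everything else follows directly from the equidistribution toolkit of Section 3.2 together with Lemma \ref{lemFiniteInterme}, which ensures that the resulting index set $\scrA/{\sim}$ is finite.
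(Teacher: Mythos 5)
Your proof is correct and follows essentially the same route the paper intends: the text immediately preceding Corollary~\ref{coroBoundStrataProb} attributes the description of $X_0$ to Corollary~\ref{coroTranFullComponents} and Lemma~\ref{lemOneObtContainAnother}, and those are exactly the tools you deploy (plus Theorem~\ref{thmEMSequi1} for the reverse inclusion, just as the paper uses it to see $X_0$ is compact). Your treatment of disjointness is in fact slightly more careful than strictly necessary in that it explicitly reduces to the identity components $\rmL^{\circ}$, $\rmL'^{\circ}$ via the clopen-component argument before invoking Lemma~\ref{lemOneObtContainAnother}; this cleanly sidesteps the issue that $\rmL(\R)$ may be disconnected, and it is the form of the argument the lemma is phrased to support.
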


\subsection{An algebraic compactification}\label{cpctfyalgebraic}
For each $\bmL\in \scrA$ that contains $\bmH$,  fix a representation $(\rho_{\bmL}, \bmV_{\bmL})$ and a $\Q$-vector $v_{\bmL}\in \bmV_{\bmL}$ whose stabilizer in $\bmG$ is exactly equal to $\bmL$. So we have fixed finitely many vectors by Lemma \ref{lemFiniteInterme}. For a finite dimensional linear space $\bmV$, we let $\bbP(\bmV)$ be the associated projective variety.

\begin{defi}\label{defiComptfyX_1}
\begin{equation*}
    \begin{aligned}
            \iota_1:\, &\rmG/\rmH \to \bigoplus\bmV_{\bmL} \to \bigoplus\bbP(\bmV_{\bmL}\oplus \Q) \\
            &g \mapsto \oplus g\cdot \bmv_{L},\, \oplus v \mapsto \oplus [v:1]
    \end{aligned}
\end{equation*}
We define $X_1:= \overline{\iota_1(\rmG/\rmH)}$ and $\bmX_1:= \overline{\overline{\iota_1(\bmG/\bmH) }}$, a projective variety over $\Q$.
\end{defi}
Here and elsewhere we use $\overline{\text{something}}$ to denote the closure in Hausdorff topology while $\overline{\overline{\text{something}}}$ is reserved for Zariski topology.

By extending the action of $\bmG$ to $(\rho_{\bmL}\oplus 1, \bmV_{\bmL}\oplus \Q )$ . We see that $\iota_1$ is $\bmG$-equivariant.

\begin{defi}\label{defi}
Given a sequence $(g_n)$ in $\rmG$ such that $\iota_1([g_n])$ converges. We let $\scrA_0(g_n)$ be the collection of  $\bmL$  such that $( g_n v_{\bmL})$ converges in $V_{\bmL}$.
\end{defi}

\begin{lem}\label{lemMinGpStableUnder_g_n}
If $\bmL_1, \bmL_2 \in \scrA_0(g_n)$, then $(\bmL_1\cap \bmL_2)^{\circ} \in \scrA_0(g_n)$. Therefore there exists a unique minimal element in $\scrA_0(g_n)$.
\end{lem}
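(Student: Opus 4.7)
Write $\bmM := (\bmL_1 \cap \bmL_2)^\circ$, with corresponding real points $\rmM := \bmM(\R)$. Since $\bmH$ is connected and by the standing convention of Section \ref{cpctfyalgebraic} sits inside each $\bmL_i \in \scrA_0(g_n)$, we get $\bmH \subseteq \bmM$. The plan is to verify $\bmM \in \scrA_0(g_n)$, which requires two things: $(i)$ $\bmM \in \scrA$, so that the vector $v_{\bmM}$ of Definition \ref{defiComptfyX_1} is well-defined; and $(ii)$ $(g_n v_{\bmM})$ converges in $V_{\bmM}$. Once closure under $(\bmL_1, \bmL_2) \mapsto (\bmL_1 \cap \bmL_2)^\circ$ is established, the unique minimal element of $\scrA_0(g_n)$ follows immediately, because $\scrA$ (and hence $\scrA_0(g_n)$) is finite by Lemma \ref{lemFiniteInterme}, so any descending chain of such identity-components-of-intersections stabilizes at a unique minimum.

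For $(i)$, the goal is to produce a sequence $(\lambda_n) \subseteq \Gamma$ with $\lambda_n \bmH \lambda_n^{-1} \subseteq \bmM$ that is generic in the sense of Theorem \ref{thmEMSequi2} (not eventually contained in any proper connected $\Q$-subgroup of $\bmM$), and then invoke Theorem \ref{thmEMSequi2} together with Corollary \ref{coroTranFullComponents} to conclude $\lambda_n \mu_{\rmH} \to \mu_{\rmM}$. Such a sequence can be constructed because $\bmM$ is reductive (as the identity component of the intersection of two reductive subgroups $\bmL_1, \bmL_2$) and because Conditions \ref{condition 1} and \ref{condition 2} propagate from the pair $(\bmH, \bmG)$ to the subproblem $(\bmH, \bmM)$; this reduces $(i)$ to the same machinery that already places $\bmL_1, \bmL_2$ in $\scrA$.

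For $(ii)$, the central construction is the auxiliary vector $w := v_{\bmL_1} \oplus v_{\bmL_2} \in V_{\bmL_1} \oplus V_{\bmL_2}$; a direct check shows that its $\bmG$-stabilizer equals $\bmL_1 \cap \bmL_2$, whose identity component is $\bmM$. Since $g_n v_{\bmL_1}$ and $g_n v_{\bmL_2}$ converge by hypothesis, so does $g_n w$. Meanwhile, the standing assumption that $\iota_1([g_n])$ converges forces the projective class $[g_n v_{\bmM}:1]$ to converge in $\bbP(V_{\bmM} \oplus \Q)$; hence it suffices to show $\|g_n v_{\bmM}\|$ stays bounded, since then the projective limit is in the affine chart and $g_n v_{\bmM}$ itself converges in $V_{\bmM}$. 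Boundedness follows from a Chevalley-type comparison between $v_{\bmM}$ and $w$, whose stabilizers agree up to the finite quotient $(\bmL_1 \cap \bmL_2)/\bmM$: one constructs a $\bmG$-equivariant polynomial map between suitable tensor or symmetric powers of the relevant representations that yields a polynomial estimate of $\|g_n v_{\bmM}\|^k$ in terms of $\|g_n w\|^{k'}$. The main obstacle is making this comparison effective, especially the passage from stabilizer $\bmL_1 \cap \bmL_2$ down to its identity component $\bmM$, which may require an auxiliary representation containing a vector whose stabilizer is exactly $\bmM$, constructed from $w$ together with data that distinguishes the connected components of $\bmL_1 \cap \bmL_2$.
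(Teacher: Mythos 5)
Your skeleton matches the paper's: you form the auxiliary vector $w := v_{\bmL_1}\oplus v_{\bmL_2}$ (stabilizer $\bmL_1\cap\bmL_2$), note $g_n w$ converges, and observe that since $[g_nv_{\bmM}:1]$ already converges in $\bbP(V_{\bmM}\oplus\Q)$ by the standing hypothesis on $\iota_1([g_n])$, it suffices to show $g_nv_{\bmM}$ stays bounded. Up to that point you and the paper agree.

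Where you diverge is the crucial boundedness step, and there your argument is both more complicated than needed and not actually carried out. You propose a ``Chevalley-type comparison'' via equivariant polynomial maps between tensor/symmetric powers, and then acknowledge you cannot make it effective because the stabilizers differ by the finite group $(\bmL_1\cap\bmL_2)/\bmM$. The paper sidesteps all of this with a purely topological observation: the convergence of $g_n w$ is read as convergence of $[g_n]$ in $\bmG/(\bmL_1\cap\bmL_2)$ (viewing that homogeneous space inside $V_{\bmL_1}\oplus V_{\bmL_2}$ through $w$), and then the finite covering $\bmG/\bmM\to\bmG/(\bmL_1\cap\bmL_2)$ is proper, so $[g_n]$ remains in a compact set of $\bmG/\bmM$, whence $g_nv_{\bmM}$ is bounded. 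No symmetric powers, no polynomial estimates; the finite-index passage from $\bmL_1\cap\bmL_2$ to its identity component is handled entirely by properness of a finite cover. You identified the exact difficulty but stopped short of the two-line resolution.

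Your step $(i)$, verifying that $v_{\bmM}$ is even defined, raises a point the paper does not discuss explicitly, but your justification is flawed: you assert that $\bmM=(\bmL_1\cap\bmL_2)^\circ$ is reductive ``as the identity component of the intersection of two reductive subgroups.'' That is false in general — for example, the intersection of a diagonal $\GL_2$ in $\GL_2\times\GL_2$ with a unipotent-twisted copy $\{(g,ugu^{-1})\}$ is the centralizer of $u$, which is solvable and not reductive. If reductivity of $\bmM$ holds in the paper's setting, it must come from Condition~\ref{condition 1} and the containment $\bmH\subset\bmM$, not from closure of reductivity under intersection. The claimed ``propagation'' of Conditions~\ref{condition 1} and \ref{condition 2} to the pair $(\bmH,\bmM)$ is likewise left unargued. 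In short: you saw a real issue but did not resolve it, and the one reason you gave is incorrect.
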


\begin{proof}
By assumption $\lim g_n v_{L_i}$ (i=1,2) exists in $\bmV_{\bmL_i}$ (i=1,2), therefore $\lim g_n(v_{\bmL_1}\oplus v_{\bmL_2})$ exists in $\bmV_{\bmL_1}\oplus \bmV_{\bmL_2}$. Hence $\lim [g_n]$ exists in $\bmG/\bmL_1\cap \bmL_2$ and so $[g_n]$ is bounded in $\bmG/(\bmL_1\cap \bmL_2)^{\circ}$.
So $g_n v_{(\bmL_1\cap \bmL_2)^{\circ}}$ is bounded in $\bmV_{(\bmL_1\cap \bmL_2)^{\circ}}$. But we already knew  $\lim g_n v_{(\bmL_1\cap \bmL_2)^{\circ}}$ exists in $\bbP(\bmV_{(\bmL_1\cap \bmL_2)^{\circ}}\oplus \Q)$, hence the limit has to be in $\bmV_{(\bmL_1\cap \bmL_2)^{\circ}}$.
\end{proof}

\subsection{Relation between these two compactifications}

Given a sequence $(g_n)$ in $\rmG$ such that $\iota_1([g_n])$ converges, by Lemma \ref{lemMinGpStableUnder_g_n} we find the minimal element $\bmL_0$ in $\scrA(g_n)$  and $b\in \rmG$ such that $\lim g_n v_{\bmL_0} = bv_{\bmL_0}$.

\begin{prop}\label{propAlgeConvImplyMeasConv}
Under the assumption above, $\lim (g_n)_* \mu_{\rmH} = b\mu_{\rmL_0}$.
\end{prop}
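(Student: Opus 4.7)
The strategy is to show that any weak-$*$ subsequential limit $\nu$ of $(g_n)_*\mu_{\rmH}$ coincides with $b\mu_{\bmL_0}$; since $X_0$ is sequentially compact this suffices to identify the full limit. Applying Theorem \ref{thmEMSequi1} together with Corollary \ref{coroTranFullComponents} to any such convergent subsequence furnishes, after a further refinement, a decomposition $g_{n_k}=\delta_{n_k}\gamma_{n_k}h_{n_k}$, an element $\delta_\infty=\lim_k\delta_{n_k}\in\rmG$, and a connected $\Q$-subgroup $\bmL$ of $\bmG$ satisfying $\gamma_{n_k}\bmH\gamma_{n_k}^{-1}\subset\bmL$, with
\begin{equation*}
    \nu=\lim_k(g_{n_k})_*\mu_{\rmH}=\delta_\infty\mu_{\bmL}.
\end{equation*}
The task reduces to showing $\delta_\infty\mu_{\bmL}=b\mu_{\bmL_0}$.

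The first step is to extract an element of $\scrA_0(g_n)$ from $\bmL$. Theorem \ref{thmEMSfocusing} combined with Condition \ref{condition 1} (which gives $(\bmZ_{\bmG}\bmH)^\circ\subset\bmH$) decomposes $\gamma_{n_k}=l_{n_k}d_0z_{n_k}$ with $l_{n_k}\in\rmL\cap\Gamma$, with $d_0$ constant in a finite subset of $\Gamma$ after a further subsequence, and with $z_{n_k}\in(\bmZ_{\bmG}\bmH)^\circ\cap\Gamma\subset\rmH\cap\Gamma$. Put $\bmL_1:=d_0^{-1}\bmL d_0$; this is a $\Gamma$-conjugate of $\bmL$ containing $\bmH$, so $\bmL_1\in\scrA$ and the vector $v_{\bmL_1}\in V_{\bmL_1}$ is defined. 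Using that $\bmH\subset\bmL_1$ fixes $v_{\bmL_1}$ and that $\bmL$ fixes $d_0v_{\bmL_1}$, a direct computation gives
\begin{equation*}
    g_{n_k}v_{\bmL_1}=\delta_{n_k}l_{n_k}d_0z_{n_k}h_{n_k}v_{\bmL_1}=\delta_{n_k}d_0v_{\bmL_1}\longrightarrow\delta_\infty d_0v_{\bmL_1}.
\end{equation*}
Because $g_nv_{\bmL_1}$ already converges in $\bbP(V_{\bmL_1}\oplus\Q)$ and this subsequential limit sits in the affine chart $V_{\bmL_1}$, the whole sequence $g_nv_{\bmL_1}$ converges in $V_{\bmL_1}$, so $\bmL_1\in\scrA_0(g_n)$. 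Minimality of $\bmL_0$ (Lemma \ref{lemMinGpStableUnder_g_n}) now forces $\bmL_0\subset\bmL_1$.

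For the opposite direction I exploit the hypothesis $g_nv_{\bmL_0}\to bv_{\bmL_0}$ in $V_{\bmL_0}$. Condition \ref{condition 1} transfers from $\bmH$ to any intermediate $\bmL_0\supset\bmH$, so $\bmL_0$ is reductive; by the Luna--Richardson closed-orbit criterion the orbit $\bmG\cdot v_{\bmL_0}$ is closed in $V_{\bmL_0}$ and the orbit map $\rmG/\rmL_0\to\rmG\cdot v_{\bmL_0}$ is a homeomorphism. Consequently we may write $g_n=b\,\epsilon_n l_n$ with $\epsilon_n\to e$ in $\rmG$ and $l_n\in\rmL_0$. Applying Theorem \ref{thmEMSequi1} and Corollary \ref{coroTranFullComponents} to $(l_n)$, and invoking Lemma \ref{lemOneObtContainAnother} to keep the limit supported inside $\rmL_0\Gamma/\Gamma$ (since each $l_n\in\rmL_0$), after refinement we obtain $\beta\in\rmL_0$ and a connected $\Q$-subgroup $\bmM\subset\bmL_0$ containing $\bmH$ with $l_n\mu_{\rmH}\to\beta\mu_{\bmM}$. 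Passing to the limit in $g_n\mu_{\rmH}=b\epsilon_n\cdot l_n\mu_{\rmH}$ yields
\begin{equation*}
    \delta_\infty\mu_{\bmL}=b\beta\mu_{\bmM}.
\end{equation*}
Equating supports and comparing dimensions gives $\dim\bmM=\dim\bmL=\dim\bmL_1$; together with the chain $\bmM\subset\bmL_0\subset\bmL_1$ this collapses to $\bmM=\bmL_0=\bmL_1$. Since $\beta\in\rmL_0$ preserves $\mu_{\bmL_0}$, we conclude $\nu=b\mu_{\bmL_0}$, as required.

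The principal obstacle is bridging the two heterogeneous notions of convergence at play: $\bmL$ is produced purely by the measure-theoretic machinery of Eskin--Mozes--Shah applied to $g_n$, while $\bmL_0$ is characterised algebraically by the convergence of the vectors $g_nv_{\bmL_0}$. The ``double EMS'' move---applied first to $g_n$ yielding $\bmL$, and then to the residual sequence $l_n\in\rmL_0$ yielding $\bmM$---combined with the focusing analysis of Theorem \ref{thmEMSfocusing} and the support-containment Lemma \ref{lemOneObtContainAnother}, is what pins all three groups $\bmL_0,\bmL_1,\bmM$ down to the same subgroup. A second delicate point is converting pointwise convergence $g_nv_{\bmL_0}\to bv_{\bmL_0}$ into convergence in $\rmG/\rmL_0$; this rests on closedness of the orbit $\bmG\cdot v_{\bmL_0}$ and ultimately on reductivity of $\bmL_0$, which is inherited from Condition \ref{condition 1}.
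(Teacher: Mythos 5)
Your proof is correct and the first half (apply Theorem \ref{thmEMSequi1} and Corollary \ref{coroTranFullComponents} to get $\nu=\delta_\infty\mu_{\rmL}$, apply Theorem \ref{thmEMSfocusing} with Condition \ref{condition 1} to produce $\bmL_1=d_0^{-1}\bmL d_0$, compute $g_nv_{\bmL_1}=\delta_nd_0v_{\bmL_1}$ and conclude $\bmL_0\subset\bmL_1$) matches the paper's argument exactly. Where you diverge is in pinning down the reverse containment. The paper goes directly from $g_n\rmH\Gamma\subset b_n\rmL_0\Gamma$ to the support containment $\delta_\infty d\rmL'\Gamma/\Gamma\subset b\rmL_0\Gamma/\Gamma$ and then uses the containment lemma (Lemma \ref{lemOneObtContainAnother}, stated there as a chain of implications) to extract $\gamma_0$ with $\gamma_0\bmL'\gamma_0^{-1}\subset\bmL_0$, forcing $\bmL_0=\bmL'$. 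You instead apply the Eskin--Mozes--Shah machinery a second time, now to the residual sequence $(l_n)\subset\rmL_0$, to manufacture a group $\bmM$; Lemma \ref{lemOneObtContainAnother} then places (a conjugate of) $\bmM$ inside $\bmL_0$, and comparing supports of $\delta_\infty\mu_{\rmL}=b\beta\mu_{\bmM}$ yields $\dim\bmM=\dim\bmL=\dim\bmL_1$, collapsing the chain $\bmM\subset\bmL_0\subset\bmL_1$. Both routes work; the paper's single-pass support comparison is more economical, while your double-EMS step avoids manipulating coset containments directly and packages the conclusion as a clean dimension count. The ingredients (Theorem \ref{thmEMSfocusing}, Lemma \ref{lemOneObtContainAnother}, Condition \ref{condition 1}) are the same.

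One small correction: your appeal to a ``Luna--Richardson closed-orbit criterion'' is misplaced. Reductivity of the stabilizer does not by itself imply the orbit is closed (consider $\GL_1$ acting on $\mathbb{A}^2$ by $t\cdot(x,y)=(tx,t^{-1}y)$: the orbit of $(1,0)$ has trivial, hence reductive, stabilizer but is not closed); the implication goes the other way (Matsushima). Fortunately you do not need closedness of $\bmG\cdot v_{\bmL_0}$ at all. Since $bv_{\bmL_0}$ already lies in the orbit $\rmG\cdot v_{\bmL_0}$, it suffices that the orbit map $\rmG\to\rmG\cdot v_{\bmL_0}(\R)$, being a submersion onto a locally closed submanifold, is open onto its image near $b$; this is a general fact for Lie-group orbits and gives the decomposition $g_n=b\epsilon_nl_n$ with $\epsilon_n\to e$ immediately, exactly as the paper asserts without comment.
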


\begin{proof}
It suffices to show that for any subsequence $n_k$, there exists a further subsequence $n_{k_i}$ such that 
$\lim_i g_{n_{k_i}} \mu_{\rmH} = b\mu_{\rmL_0}$. Therefore we shall feel free to pass to a subsequence whenever necessary.

So, as in Theorem \ref{thmEMSequi1}, we may assume that $g_n=\delta_n \gamma_n h_n$ with $\delta_n \to \delta_{\infty}$, that there exists a connected $\Q$-subgroup $\bmL\leq \bmG$ containing $\gamma_n \bmH \gamma_n^{-1} $ for all $n$ and that no proper subgroup of $\bmL$ contains  $\gamma_n \bmH \gamma_n^{-1} $ for infinitely many $n$. 
Hence by Corollary \ref{coroTranFullComponents}, $g_n\mu_{\rmH}$ converges to $\delta_{\infty}\mu_{\rmL}$.

By Theorem \ref{thmEMSfocusing} and passing to a subsequence, there exists 
    $d\in \bmX(\bmH,\bmL)\cap \Gamma $,
    $z_n \in (\bmZ_{\bmG}\bmH)^{\circ}\cap \Gamma$, and $l_n \in \bmL\cap \Gamma $ such that $\gamma_n= l_n d z_n$.
    
    Let $\bmL':= d^{-1}\bmL d =  (l_n^{-1}\gamma_nz_n^{-1})^{-1}\bmL (l_n^{-1}\gamma_nz_n^{-1})$, which contains $\bmH$.
    
    Then
    \begin{equation*}
        g_nv_{\bmL'}= \delta_n l_n d z_n h_n v_{\bmL'} = (\delta_n d) (d^{-1}l_nd) z_n v_{\bmL'}.
    \end{equation*}
    By our assumption $(\bmZ_{\bmG}\bmH)^{\circ} \subset \bmH$, so 
    \begin{equation*}
         g_nv_{\bmL'}=\delta_n d v_{\bmL'} \text{ is bounded }
         \implies 
         \bmL' \in \scrA_0(g_n) \implies \bmL_0\subset \bmL'.
    \end{equation*}
    Also we know that $g_n\mu_{\rmH}$ converges to $\delta_{\infty}\mu_{\rmL}= \delta_{\infty}d \mu_{\rmL'}$.
    
    On the other hand, as $\lim g_n v_{\bmL_0}= bv_{\bmL_0}$, we can find $l^0_n\in \bmL_0$ and $b_n \in \rmG$ such that $g_n=b_nl^0_n$ and $b_n\to b$.
    Hence $g_n\rmH\Gamma \subset b_nl^0_n \rmL_0\Gamma=b_n\rmL_0\Gamma$. So $\delta_{\infty}d\rmL'\Gamma/\Gamma \subset b\rmL_0\Gamma/\Gamma$.
    \begin{equation*}
        \begin{aligned}
               & \delta_{\infty} d\rmL' \subset b\rmL_0\Gamma 
                \implies\, \exists\, \gamma_0,\,  \delta_{\infty}d\rmL'\subset b \rmL_0\gamma_0
                \implies\, \exists \,l_0,\, \delta_{\infty}d=bl_0\gamma_0 \\
                \implies & bl_0\gamma_0\rmL'\subset b\rmL_0\gamma_0
                \iff \gamma_0\rmL'\gamma_0^{-1}\subset \rmL_0
        \end{aligned}
    \end{equation*}
     But we already knew $\bmL_0\subset \bmL'$, so $\gamma_0 \rmL'\gamma_0=\rmL_0=\rmL'$ and
     \begin{equation*}
         \delta_{\infty}d \mu_{\rmL'}= bl_0\gamma_0 \mu_{\rmL'} =b\mu_{\rmL_0}.
     \end{equation*}
     And the proof is complete.
\end{proof}

Now we are ready to define a map  $\pi_0$ from $X_1$ to $X_0$. For any $x\in X_1$, choose a sequence $(g_n)$ such that $\lim g_n\oplus v_{\bmL} =x$, then $\pi_0(x):=\lim g_n \mu_{\rmH}$. By Proposition \ref{propAlgeConvImplyMeasConv} above, this is well-defined and does not depend on the choice of $(g_n)$.

\begin{lem}\label{lemAlgeConvImplyMearConv2}
$\pi_0$ is continuous.
\end{lem}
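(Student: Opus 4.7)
The plan is to verify $\pi_0$ is sequentially continuous and deduce continuity from the fact that both sides are metrizable. Indeed, $X_1$ is a closed subset of the finite product $\bigoplus \bbP(\bmV_{\bmL}\oplus\Q)$, which is compact metrizable; and $X_0$ is a closed (hence compact) subset of the Polish space $\Prob(\rmG/\Gamma)$ equipped with the weak-$*$ topology, hence also metrizable. Fix metrics $d_0$ on $X_0$ and $d_1$ on $X_1$.

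Next, given a convergent sequence $x_n \to x$ in $X_1$, I would produce a diagonal sequence in $\rmG$ that simultaneously witnesses the value $\pi_0(x_n)$ for each $n$ and converges in $X_1$ to $x$. Concretely, by the very definition of $\pi_0$, for every $n$ there is a sequence $(g_{n,k})_{k\geq 1}$ in $\rmG$ such that $\iota_1([g_{n,k}]) \to x_n$ and $(g_{n,k})_* \mu_{\rmH} \to \pi_0(x_n)$ as $k\to\infty$. Choose $k(n)$ so large that the element $g_n := g_{n,k(n)}$ satisfies
\[
    d_1(\iota_1([g_n]), x_n) < \tfrac{1}{n}
    \quad \text{and} \quad
    d_0((g_n)_* \mu_{\rmH},\, \pi_0(x_n)) < \tfrac{1}{n}.
\]

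Finally, the first inequality combined with $x_n\to x$ gives $\iota_1([g_n])\to x$ in $X_1$. Since the sequence $(g_n)$ makes $\iota_1([g_n])$ converge, Proposition \ref{propAlgeConvImplyMeasConv} applies and forces $(g_n)_*\mu_{\rmH}$ to converge in $\Prob(\rmG/\Gamma)$; by the well-definedness of $\pi_0$ already recorded just before the statement, the limit is exactly $\pi_0(x)$. The second inequality then yields $\pi_0(x_n)\to \pi_0(x)$, which is the desired sequential continuity.

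There is no real obstacle here: the entire content is carried by Proposition \ref{propAlgeConvImplyMeasConv}, whose conclusion has been arranged precisely so that any algebraic limit in $X_1$ determines a unique measure-theoretic limit in $X_0$. The diagonal selection above is only a formal device that repackages this as continuity in the metric sense.
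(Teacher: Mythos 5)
Your proof is correct and runs along essentially the same lines as the paper's: both are diagonal-sequence arguments that exploit metrizability of $X_0$ and $X_1$ and then invoke Proposition~\ref{propAlgeConvImplyMeasConv} (together with the well-definedness of $\pi_0$) on the diagonal sequence. The paper phrases it by first passing to a subsequence so that $\lim \pi_0(x_n)$ exists and then identifying the limit, whereas you go directly via the triangle inequality — a cosmetic difference. One small slip: you justify metrizability of $X_0$ by ``closed, hence compact'' inside $\Prob(\rmG/\Gamma)$, but closedness in a Polish space does not give compactness ($\Prob(\rmG/\Gamma)$ is not compact when $\rmG/\Gamma$ is noncompact); compactness of $X_0$ actually comes from Theorem~\ref{thmEMSequi1}. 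This does not affect your argument, since all you use is metrizability, which already follows from $X_0$ being a subspace of the Polish space $\Prob(\rmG/\Gamma)$.
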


\begin{proof}
It suffices to show that if $x_n\to x_{\infty}$ in $X_1$, then $\pi_0(x_n)\to \pi_0(x_{\infty})$ in $X_0$.
For each $x_n$ we choose a sequence $(g^n_i)_i$ such that 
$\lim_i g^n_i \oplus v_{\bmL} =x_n$. Then we can find $i_n$ such that for all $i_n'\geq i_n$, $x_{\infty}=\lim g^n_{i'_n}\oplus v_{\bmL}$.

On the other hand, by passing to a subsequence we may assume that $\lim \pi_0(x_n)$ exists. By definition, $\pi_0(x_n) = \lim_i g^n_i \mu_{\rmH}$. Note that the space of probability measures equipped with the weak-$*$ topology has a countable basis. Hence we can find $j_n$ such that for all $j_n'\geq j_n$, $\lim \pi_0(x_n) =\lim_n g^n_{j'_n}\mu_{\rmH}$. Taking $k_n:=\max\{i_n,j_n\}$ finishes the proof.

\end{proof}

\section{Equidistribution on varieties}\label{secBigEquiOnVarie}

\subsection{Equidistribution on smooth varieties with nice boundaries}\label{secEquiInGeometry}

The set-up is the following (see \cite[Section 4.2]{ChamTschin10}). Note that Cartier divisors and Weil divisors are equivalent in our setting, so we will just call them divisors for simplicity.
\begin{itemize}\label{setupAlgebraic}
    \item $\bmX$ is a smooth projective variety of equal dimension defined over $\R$;
    \item $\rmD$ is an effective divisor on $\bmX$ whose base change to $\C$ has strict normal crossing, write $\rmD=\sum \rmD_{\alpha}$ as sum of irreducible divisors (over $\C$);
    \item $\rmf_{\rmD}$ is the canonical section of the line bundle $\calO_{\bmX}\rmD$;
    \item $\rmU$ is the complement of $\supp(\rmD)$ in $\bmX$;
    \item $\omega_{\bmX}$ is the canonical line bundle on $\bmX$. Its sections consist of top-degree differential forms;
    \item For $V$ an open set in $\bmX$, each local section $\omega\in \Gamma(V,\omega_{\bmX})$ gives rise to a measure on $V(\R)$ which we denote by $|\omega|$;
    \item Let $\omega_{\bmX}\rmD$ be the line bundle whose local sections consist of those meromorphic top-degree differential forms whose poles can be cancelled by the zeros of $\rmD$, thought of as a Cartier divisor;
    \item Assume that $\omega_{\bmX}\rmD$ is equipped with a \textit{smooth metric}, by which we mean the following. Forget the algebraic structure and move to the category of smooth manifold by taking the real points. A metric is a collection of functions 
    \begin{equation*}
        \norm{\cdot}_x : \omega_{\bmX}\rmD(x) \longrightarrow \R_{\geq 0}
        \text{ for every } x\in \bmX(\R)
    \end{equation*}
    where $\omega_{\bmX}\rmD(x)$ denotes the fibre at $x$,
     such that $ \norm{l}_x=0$ iff $l=0$ for any $l\in \omega_{\bmX}\rmD(x)$; $\norm{al}_x=|a|\cdot \norm{l}_x$ for every $a\in\R$ and $l\in \omega_{\bmX}\rmD(x)$; $x\mapsto \norm{l(x)}_x$ is a smooth function for every $V\subset \bmX(\R)$ open and $l\in \Gamma(V,\omega_{\bmX}\rmD)$. We shall drop the dependence on $x$ from the notation if no confusion might arise;
     \item Depending on this metric we define a locally finite measure $\tau_{(\bmX,\rmD)}$. Let $\omega$ be a local section of $\omega_{\bmX}$ then locally $\tau_{\bmX,\rmD}$ is represented by 
     $\frac{|\omega|}{\norm{\omega\otimes \bmf_{\rmD}}}$;
     \item $\rmL$ is another effective divisor whose support contains $\supp(\rmD)$ and $\calO_{\bmX}(\rmL)$ is endowed with a smooth metric. Write $\rmL =\sum \lambda_{\alpha} \rmD_{\alpha} + \rmL'$ for some effective divisor $\rmL'$ whose support does not contain any $\supp(\rmD_{\alpha})$;
     \item Define 
     $\height: \rmU(\R) \to \R_{>0}$ by $\height(x):=\norm{\rmf_{\rmL}(x)} ^{-1}$ and
     \begin{equation*}
         B_{R}:=\left \{x\in \rmU(\R) \, \bigg\vert\, \norm{\rmf_{\rmL}(x)} \geq \frac{1}{R}\right\}=
         \left \{x\in \rmU(\R) \, \vert\, \height(x) \leq {R}\right\}
         ,
     \end{equation*}
     a compact set contained in $\rmU(\R)$.
\end{itemize}

It is proved in \cite[Corollary 4.8]{ChamTschin10} that 

\begin{thm}
The family $\{B_R\}$ is nice (see section \ref{secEquiImplyCount}) and the limit of the measures
\begin{equation*}
    \frac{1}{\tau_{\bmX,\rmD}(B_R)} \tau_{\bmX,\rmD}\big\vert_{B_R}
\end{equation*}
exists as $R$ tends to $+\infty$. Moreover the limit measure is a sum of continuous measures, each of which is supported on a manifold defined by some connected components of the $\R$-points of the intersection of certain irreducible components of $\rmD_{\C}$. 
\end{thm}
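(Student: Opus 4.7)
The plan is to localize the problem near the boundary divisor $\rmD(\R)$. Since $\tau_{\bmX,\rmD}$ is locally finite on $\rmU(\R)$ but blows up as one approaches $\supp(\rmD)(\R)$, while $B_R$ exhausts $\rmU(\R)$ as $R\to\infty$, the dominant contribution to both $\tau_{\bmX,\rmD}(B_R)$ and the normalized measure $\tau_{\bmX,\rmD}\vert_{B_R}/\tau_{\bmX,\rmD}(B_R)$ comes from arbitrarily small tubular neighborhoods of $\supp(\rmD)(\R)$. I would fix a smooth partition of unity on $\bmX(\R)$ subordinate to a finite cover by real analytic coordinate charts adapted to the strict normal crossing structure, and analyze each chart separately.

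On a chart meeting $\rmD(\R)$ at a point $p$ with analytic coordinates $(y_1,\dots,y_n)$ in which the irreducible components of $\rmD$ through $p$ are the hyperplanes $\{y_i=0\}_{i\in I_p}$, the strict normal crossing hypothesis together with smoothness of the chosen metrics gives
\begin{equation*}
  \norm{\rmf_{\rmD}(y)} = \phi_{\rmD}(y)\prod_{i\in I_p}|y_i|,\qquad
  \norm{\rmf_{\rmL}(y)} = \phi_{\rmL}(y)\prod_{i\in I_p}|y_i|^{\lambda_i}
\end{equation*}
for smooth strictly positive functions $\phi_{\rmD},\phi_{\rmL}$. Hence locally $\tau_{\bmX,\rmD}$ takes the form $\psi(y)\prod_{i\in I_p}|y_i|^{-1}|dy_1\cdots dy_n|$ for a smooth positive density $\psi$, and the integral of $\tau_{\bmX,\rmD}$ over $B_R$ restricted to the chart reduces, up to smooth positive factors, to a monomial Mellin-type integral
\begin{equation*}
  \int_{\prod_{i\in I_p}|y_i|^{\lambda_i}\,\geq\, c\,\phi_{\rmL}(y)/R}\; \prod_{i\in I_p}|y_i|^{-1}\,\psi(y)\,|dy|.
\end{equation*}
After integrating out the transverse directions using Fubini and substituting $|y_i|=e^{-t_i}$, this becomes a standard Laplace asymptotic on a simplex, yielding $c_p R^{a_p}(\log R)^{b_p-1}$ with $a_p=\max_{i\in I_p} 1/\lambda_i$ and $b_p=\#\{i\in I_p : 1/\lambda_i = a_p\}$. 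The coefficient $c_p$ is an integral against a smooth density supported on the stratum $\bigcap_{i\in I_p^{\max}}\{y_i=0\}$.

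Setting $a:=\max_p a_p$ and $b:=\max_{p: a_p = a}(b_p-1)$, summing over the partition of unity yields both the two-sided estimate $c_1\leq \tau_{\bmX,\rmD}(B_R)/(R^a(\log R)^b)\leq c_2$ and weak-$*$ convergence of the normalized measures against a continuous test function $f$, via the same Mellin analysis applied to $\int f\,d\tau_{\bmX,\rmD}\vert_{B_R}$. The limit is then a finite sum of continuous measures, each supported on a connected component of the $\R$-points of a maximal boundary intersection. The first niceness condition follows in the intended application because, if a Lie group $G$ acts smoothly on $\bmX$, the ratio $\norm{g^*\rmf_{\rmL}}/\norm{\rmf_{\rmL}}$ is uniformly $1\pm O(\epsilon)$ on $\bmX(\R)$ for $g$ in a small identity neighborhood, by compactness of $\bmX(\R)$ and smoothness of the metric and action. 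The main technical obstacle is the bookkeeping needed to assemble the local Mellin expansions coherently across overlapping charts and to show that lower (non-maximal) strata contribute $o(R^a(\log R)^b)$; this is where the strict normal crossing condition is indispensable, since it guarantees that on each chart the integrals factor as products and the leading asymptotic is read off from a purely combinatorial maximization over $I_p$.
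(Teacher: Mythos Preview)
The paper does not give its own proof of this statement; it is quoted as \cite[Corollary 4.8]{ChamTschin10}. Your outline follows exactly the strategy of Chambert-Loir and Tschinkel: localize by a partition of unity adapted to the strict normal crossing structure, reduce in each chart to a monomial integral, and extract the leading term by a Mellin/Laplace analysis on a simplex. So the approach is correct and matches the cited source.

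There is, however, a genuine error in your asymptotic computation. With the local normal form you wrote down (multiplicity one for $\rmD$, so that $\tau_{\bmX,\rmD}=\psi(y)\prod_{i\in I_p}|y_i|^{-1}\,|dy|$), the substitution $|y_i|=e^{-t_i}$ converts the chart integral over $B_R$ into
\[
\int_{\substack{0\le t_i\le T_i\\ \sum_{i\in I_p}\lambda_i t_i\le \log R+O(1)}}\widetilde\psi(t)\,dt,
\]
which is the volume of a truncated simplex and grows like $(\log R)^{|I_p|}$, \emph{not} like $R^{\max_i 1/\lambda_i}$. A positive power of $R$ appears only when $\rmD$ carries multiplicities $d_i>1$: writing $\rmD=\sum_\alpha d_\alpha\rmD_\alpha$, the local density becomes $\prod|y_i|^{-d_i}$ and the simplex integral picks up a factor $\prod e^{(d_i-1)t_i}$, whose growth is governed by $a_p=\max_{i\in I_p}(d_i-1)/\lambda_i$. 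This is precisely the exponent $\sigma_U$ recorded later in the paper. Your formulas for both $a_p$ and $b_p$ must be corrected accordingly; $b_p$ then counts the indices achieving this corrected maximum. The remainder of the sketch---gluing across charts, identifying the limit measure on the maximal strata, and the verification of the first niceness condition via the $\bmG$-action in the equivariant application---is sound.
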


By saying that a measure on a manifold is \textit{continuous}, we mean that in local charts it is absolutely continuous with respect to the Lebesgue measure and its Radon-Nikodym derivative is a continuous function.

Their proof also yields the following, which will be actually applied. For each connected component (in the Hausdorff topology) $U$ of $\rmU(\R)$, 

\begin{thm}\label{thmEquiBallsOnVariety}
The family $\{B_R\cap U\}$ is nice and the limit of the measures
\begin{equation*}
    \frac{1}{\tau_{\bmX,\rmD}(B_{R}\cap U)} \tau_{\bmX,\rmD}\big\vert_{B_{R}\cap U}
\end{equation*}
exists as $R$ tends to $+\infty$. Moreover the limit measure is a sum of continuous measures, each of which is supported on a manifold defined by some connected components of the $\R$-points of the intersection of certain irreducible components of $\rmD_{\C}$. 
\end{thm}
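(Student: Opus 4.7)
The plan is to observe that the Chambert-Loir–Tschinkel argument establishing the preceding theorem is entirely local on $\bmX(\R)$, and to rerun it with the characteristic function $\mathbf{1}_U$ inserted throughout. Their strategy covers $\bmX(\R)$ by finitely many coordinate charts in which the divisor acquires the monomial form $\rmf_{\rmD}=\prod z_{\alpha}^{d_{\alpha}}$ and $\rmf_{\rmL}=u\prod z_{\alpha}^{\lambda_{\alpha}}$ with $u$ a smooth nonvanishing unit, selects a smooth partition of unity subordinate to this cover, and analyzes each piece via a Mellin-type model integral; the leading contribution to both $\tau_{\bmX,\rmD}(B_R)$ and to the normalized measure comes only from those charts meeting strata on which the relevant exponent profile is extremal, and produces a limit that is a sum of continuous measures supported on such real strata.

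To localize to $U$, I would use that $U$ is a connected component of the smooth manifold $\rmU(\R)$, hence both open and closed there, so $\mathbf{1}_U$ is smooth on $\rmU(\R)$. I would then choose a finite cover of $\overline{U}\subset\bmX(\R)$ by adapted charts as above, fix a subordinate partition of unity, and multiply each cut-off function by $\mathbf{1}_U$. The same chart-by-chart asymptotic analysis then yields a polynomial-logarithmic estimate $\tau_{\bmX,\rmD}(B_R\cap U)\sim c_U R^{a_U}(\ln R)^{b_U}$ whose exponents depend only on the strata of $\rmD$ touching $\overline{U}$, together with convergence of $\tau_{\bmX,\rmD}\vert_{B_R\cap U}/\tau_{\bmX,\rmD}(B_R\cap U)$ to a finite sum of continuous measures, each supported on a connected component of the real locus of an intersection of irreducible components of $\rmD_{\C}$ that meets $\overline{U}$ and realizes the extremal exponent profile.

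Niceness of $\{B_R\cap U\}$ then follows in two parts. The growth estimate above supplies the second requirement in the definition of \emph{nice}. For the first requirement, concerning near-invariance under small perturbations by elements of $\rmG$ acting on $\bmX$ and extending the action on $\bmG/\bmH=\rmU$, I would use that $\rmU(\R)$ is a smooth $\rmG$-manifold: a small enough neighborhood $U_{\ep}$ of the identity in $\rmG$ preserves the connected component $U$ and moves the height $\height$ by a factor in $[1-\ep,1+\ep]$ uniformly on $B_R\cap U$, giving $B_{(1-\ep)R}\cap U\subset g(B_R\cap U)\subset B_{(1+\ep)R}\cap U$ for all $g\in U_{\ep}$ and $R\geq 1$.

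The main point to verify is that the leading constant $c_U$ is strictly positive, equivalently that $\overline{U}$ actually meets some boundary stratum realizing the extremal exponent profile. The degenerate alternative is that $\overline{U}$ does not meet $\supp(\rmD)$ at all, in which case $U$ is closed in $\bmX(\R)$ hence compact, and the statement is trivial because $B_R\cap U=U$ for all sufficiently large $R$. Otherwise the chart-by-chart analysis gives a nonzero leading term and thus a well-defined limit probability measure of the claimed form.
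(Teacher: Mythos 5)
Your proposal reconstructs the route the paper itself takes: the paper offers no independent argument for this theorem and only asserts that Chambert-Loir and Tschinkel's ``proof also yields the following,'' and your unpacking of that claim via adapted charts, a subordinate partition of unity, per-chart Mellin asymptotics, and the two halves of niceness is the intended one. The degenerate case you flag (where $\overline{U}$ misses $\supp(\rmD)$, so $U$ is compact and $B_R\cap U = U$ for large $R$) is a correct caveat, and the near-invariance argument via $\rmG$ preserving the component $U$ and perturbing the height multiplicatively is the right one.

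One step in your justification should be corrected, however, because as written it does not explain why the machinery survives the cutoff. You appeal to the fact that $\mathbf{1}_U$ is smooth on $\rmU(\R)$; that is tautologically true (any connected component of a manifold has a locally constant indicator on that manifold) but irrelevant to the delicate part of the analysis. After multiplying the partition-of-unity cutoffs by $\mathbf{1}_U$, the resulting functions are \emph{not} smooth on $\bmX(\R)$: they jump precisely along $\partial U \subset \supp(\rmD)(\R)$, which is exactly the locus where the Mellin-type analysis lives. A naive reading of Chambert-Loir--Tschinkel, which expands smooth test functions against $\tau_{\bmX,\rmD}$, would therefore appear to exclude such cutoffs. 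What actually makes the argument go through is structural: in each adapted chart, $\rmU(\R)$ decomposes into sign-quadrants $\{\pm z_\alpha > 0 : \alpha \in A\}$, and the restriction of $\mathbf{1}_U$ to the chart is the indicator of a sub-collection of these quadrants. Since the Chambert-Loir--Tschinkel Mellin analysis already proceeds half-axis by half-axis (one integrates $\int_0^1 z_\alpha^{d_\alpha - 1 + \lambda_\alpha s}\,\diffz_{\alpha}$ separately on each side of $z_\alpha = 0$), restricting the summation to the quadrants meeting $U$ changes only which boundary strata and which constants contribute, not the analytic structure or the error estimates. This quadrant compatibility, rather than smoothness of $\mathbf{1}_U$ on the interior, is the content of the paper's ``their proof also yields,'' and your write-up should make it explicit.
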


Actually this limit measure has a more precise description in \cite[Corollary 4.8]{ChamTschin10} but we are content with this weaker statement.
Let us give a more precise description of the asymptotic of $\tau_{\bmX,\rmD}(B_{R}\cap U)$ though. 

Let $\scrB$ be an index set of irreducible components of $\rmD_{\C}$ and $\scrP(\scrB)$ be the collection of subsets of $\scrB$. For $A\in \scrP(\scrB)$, let $\rmD_A:= \cap_{\alpha\in A} \rmD_{\alpha}$.
Let $\Gal$ be the Galois group of $\C$ over $\R$. It acts on $\scrP(\scrB)$ and let $\scrP(\scrB)^{\Gal}$ be the subsets of $\scrB$ that are $\Gal$-stable. 

Define the analytic Clemens complex

\begin{equation*}
    \begin{aligned}
             \scrC^{an}_{\R}(\rmD):=\left\{
               (A,Z) \:\middle\lvert\:
             \parbox{80mm}
              {
              \raggedright
              $A \in \scrP(\scrB)^{\Gal},\,Z$  is a $\C$-irreducible component of $\rmD_A$ defined over $\R$, $Z(\R)$ is non-empty
              }
             \right\}
    \end{aligned}
\end{equation*}
equipped with the partial order $\leq$ defined by
\begin{equation*}
    (A,Z)\leq (A',Z') \iff A\subset A',\,Z\supset Z'.
\end{equation*}

For $(A,Z)\in \scrC^{an}_{\R}(\rmD)$, define
\begin{equation*}
    \dim(A,Z):=
    \max\left\{
    n\,\middle\lvert\,
    \exists (A_0,Z_0)\lneq...\lneq(A_n,Z_n)=(A,Z)
    \right\}.
\end{equation*}
The dimension of a poset (poset = partially ordered set) would then be the maximum of dimensions of elements contained in it.

Now we take $U$ to be a union of some connected components (in the Hausdorff topology) of $\rmU(\R)$. When $U=\rmU(\R)$, definitions below would coincide with the corresponding one in \cite{ChamTschin10}.

Define $\scrC^{an}_{\R,U}(\rmD)$ to be a sub-poset of $\scrC^{an}_{\R}(\rmD)$ by
\begin{equation*}
    \begin{aligned}
             \scrC^{an}_{\R,U}(\rmD):=\left\{
               (A,Z)\in \scrC^{an}_{\R}(\rmD) \:\middle\lvert\:
              Z(\R)\cap \overline{U}\neq \emptyset
             \right\}.
    \end{aligned}
\end{equation*}

Define 
\begin{equation*}
    \sigma_U :=
    \left\{
    \frac{d_{\alpha}-1}{\lambda_{\alpha}} \,\middle\lvert\,
    \alpha\in \scrB, \rmD_{\alpha}(\R)\cap \overline{U} \neq \emptyset
    \right\}
\end{equation*}
where $\rmD_{\alpha}(\R)$, by definition, is $\rmD_{\alpha}(\C)\cap \bmX(\R)$ in the case when $\rmD_{\alpha}$ may not be defined over $\R$. We collect the indices where the maximum is attained to form $\scrB_{max,U}\subset\scrB$.

Define
\begin{equation*}
    \scrC^{an}_{\R,(L,D),U}(\rmD) :=
    \left\{
    (A,Z)\in  \scrC^{an}_{\R,U}(\rmD) \,\middle\lvert\,
    A\subset \scrB_{max,U}
    \right\}
\end{equation*} and $b_U:=\dim \scrC^{an}_{\R,(L,D),U}$. 

The following is a (again we are content with a weaker statement) variant of \cite[Theorem 4.7]{ChamTschin10} which follows from the same proof. For the sake of notation let $b'_U:= b_U$ if $\sigma_U>0$ and $b'_U:=b_U+1$ if $\sigma_U=0$.

\begin{thm}\label{thmAsymBalls}
Assumptions same as above. Then there exists a constant $C>0$ such that
\begin{equation*}
    \lim_{R\to\infty}\frac{\tau_{\bmX,\rmD}(B_R\cap U)}{CR^{\sigma_U}\ln(R)^{b'_U-1}} =1.
\end{equation*}
\end{thm}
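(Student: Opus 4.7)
My plan is to follow the standard Chambert-Loir--Tschinkel analysis used in the proof of Theorem 4.2, while carefully keeping track of the restriction to the connected component $U$. First I would cover a neighbourhood of $\overline{U}$ by finitely many analytic coordinate charts in which $\rmD$ has the monomial form $\{x_1\cdots x_r = 0\}$, together with a subordinate partition of unity; this reduces $\tau_{\bmX,\rmD}(B_R\cap U)$ to a finite sum of local integrals. Only charts centred at points of strata $Z$ with $Z(\R)\cap\overline{U}\neq\emptyset$, i.e.\ strata appearing in $\scrC^{an}_{\R,U}(\rmD)$, can contribute non-trivially.

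In each such chart, using that $\rmf_{\rmD}$ and $\rmf_{\rmL}$ are monomial up to smooth units and that $\eta := (dx_1\wedge\dots\wedge dx_n)/(x_1\cdots x_r)$ is a nowhere vanishing local frame of $\omega_{\bmX}\rmD$, one obtains
\begin{equation*}
    \tau_{\bmX,\rmD} \;=\; \rho(x)\prod_{i=1}^{r}\frac{dx_i}{|x_i|^{d_{\alpha_i}}}\prod_{j>r}dx_j, \qquad H(x) \;=\; \sigma(x)^{-1}\prod_{i=1}^{r}|x_i|^{-\lambda_{\alpha_i}},
\end{equation*}
with $\rho,\sigma$ smooth and strictly positive (under the assumption that the chart is disjoint from $\supp\rmL'$; the opposite case only diminishes the leading order). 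The substitution $u_i := -\log|x_i|$ turns the ball condition into the linear inequality $\sum_{i}\lambda_{\alpha_i}u_i \le \log R + O(1)$, while the measure becomes $\prod_i e^{(d_{\alpha_i}-1)u_i}\,du_i$ against a smooth bounded transverse factor.

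What remains is the asymptotic evaluation of local integrals of the shape
\begin{equation*}
    I_p(R) \;=\; \int_{\{u_i\geq C\}\cap\{\sum_i \lambda_{\alpha_i}u_i \leq \log R + O(1)\}} \exp\!\Big(\sum_{i}(d_{\alpha_i}-1)u_i\Big)\,du_1\cdots du_r
\end{equation*}
convolved against the bounded transverse integral. Because the integrand is a pure exponential on a simplex, a Laplace-type analysis yields $I_p(R)\sim c_p\, R^{\sigma^{(p)}}(\log R)^{e^{(p)}-1}$, where $\sigma^{(p)}:=\max_i (d_{\alpha_i}-1)/\lambda_{\alpha_i}$ is the local maximum and $e^{(p)}$ is the dimension of the face on which that maximum is attained; when $\sigma^{(p)}=0$ the exponential collapses to $1$ and one gains one extra logarithm from the polytope volume, which is precisely the shift $b'_U = b_U + 1$ recorded in the statement. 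Summing over the partition of unity, the surviving top-order contributions come exactly from strata whose index set lies in $\scrB_{\max,U}$, i.e.\ from the sub-poset $\scrC^{an}_{\R,(L,D),U}(\rmD)$ of dimension $b_U$, which gives the asymptotic $CR^{\sigma_U}(\log R)^{b'_U-1}$ for some positive constant $C$.

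The main obstacle I expect is the combinatorial matching between the local polyhedral data and the global invariants $(\sigma_U,b_U)$ defined via the sub-Clemens complex: one has to check that the exponents $(d_{\alpha_i}-1)/\lambda_{\alpha_i}$ attaining the maximum in different charts fit together to enumerate precisely the top-dimensional faces of $\scrC^{an}_{\R,(L,D),U}(\rmD)$, and that distinct charts lying over a common stratum combine without unwanted cancellation (positivity of $\rho$ and of the transverse integrand takes care of the latter). The further restriction from $\scrC^{an}_{\R}$ to $\scrC^{an}_{\R,U}$ simply discards strata not meeting $\overline{U}$ and is harmless, so the argument is parallel in spirit to \cite[Theorem 4.7]{ChamTschin10}.
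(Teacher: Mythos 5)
Your proposal is correct and takes the approach the paper intends: the paper supplies no proof of its own here, saying only that the statement "follows from the same proof" as \cite[Theorem 4.7]{ChamTschin10}, and your sketch is precisely that Chambert-Loir--Tschinkel argument (partition of unity subordinate to monomial charts, pass to log-coordinates $u_i=-\log|x_i|$, do a Laplace-type analysis of $\int e^{\sum(d_{\alpha_i}-1)u_i}$ over the polytope $\sum\lambda_{\alpha_i}u_i\le\log R$, and keep only charts meeting $\overline{U}$). The one point worth tightening is the phrase "$e^{(p)}$ is the dimension of the face on which that maximum is attained": for your formula $I_p(R)\sim c_p R^{\sigma^{(p)}}(\log R)^{e^{(p)}-1}$ to match the theorem, $e^{(p)}$ must be the number of local indices $i$ with $(d_{\alpha_i}-1)/\lambda_{\alpha_i}=\sigma^{(p)}$, which agrees with the paper's $\dim(A,Z)$ only because that $\dim$ counts chains starting from the empty face, so $\dim(A,Z)=|A|$; under the more common convention $\dim(A,Z)=|A|-1$ your intermediate formula would be off by one, so it is worth spelling out which convention you use when matching $e^{(p)}$ to $b_U$.
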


\subsection{Resolution of singularities}

\begin{thm}\label{thmResolution}
There exists a smooth projective $\bmG$-variety $\bmX_2$ over $\R$ with a $\bmG$-equivariant proper morphism $\pi_1:\bmX_2 \to \bmX_1$ such that :
\begin{enumerate}
    \item let $\bmU_2:=\pi_1^{-1}(\bmG/\bmH)$, then $\pi_1\vert_{\bmU_2}$ is an isomorphism;
    \item let $\rmD_2:= \pi_1^{-1}(\rmD_1)$, then $(\rmD_2)_{\C}$ is a strict normal crossing divisor. 
\end{enumerate}
\end{thm}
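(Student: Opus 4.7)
The plan is to invoke $\bmG$-equivariant log-resolution of singularities applied to the pair $(\bmX_1, \rmD_1)$ over $\R$. Since $\bmX_1$ is the Zariski closure of $\iota_1(\bmG/\bmH)$ and $\iota_1$ is $\bmG$-equivariant, the $\bmG$-action on $\iota_1(\bmG/\bmH)$ extends to a $\bmG$-action on $\bmX_1$; the image $\iota_1(\bmG/\bmH)$ is the (smooth) open $\bmG$-orbit, and $\rmD_1$ is its $\bmG$-invariant complementary divisor.

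First I would base change to $\R$ and appeal to a functorial log-resolution of singularities in characteristic zero (Bierstone--Milman, Villamayor, or Kollár--W\l odarczyk). This produces a smooth projective $\R$-variety $\bmX_2$ with a projective birational morphism $\pi_1: \bmX_2 \to \bmX_1$ obtained by a sequence of blow-ups along smooth centers, such that $\pi_1^{-1}(\rmD_1)$ is a simple normal crossing divisor on $\bmX_2$ and $\pi_1$ is an isomorphism over the locus where $\bmX_1$ is smooth and $\rmD_1$ is already SNC. Since $\bmG/\bmH$ is smooth and disjoint from $\rmD_1$, item (1) follows immediately: $\bmU_2 := \pi_1^{-1}(\bmG/\bmH) \to \bmG/\bmH$ is an isomorphism.

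Second, for equivariance I would exploit functoriality of the resolution with respect to smooth morphisms. The two morphisms
\begin{equation*}
\sigma,\ \mathrm{pr}_2 : \bmG \times \bmX_1 \longrightarrow \bmX_1
\end{equation*}
(the action map and the second projection) are both smooth and surjective. By functoriality, applying the resolution to $\bmX_1$ and then pulling back along each of these two smooth morphisms yields the \emph{same} resolution of $\bmG \times \bmX_1$, up to a canonical isomorphism. This canonical identification is exactly the data of a $\bmG$-action on $\bmX_2$ lifting the one on $\bmX_1$ and making $\pi_1$ equivariant; the cocycle condition is verified by applying functoriality once more to the triple product $\bmG \times \bmG \times \bmX_1$.

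Third, for item (2) I would use that the chosen resolution commutes with arbitrary field extensions of characteristic zero (this is built into the functoriality axioms of the resolution algorithm). Hence the base change $(\rmD_2)_{\C} = \pi_{1,\C}^{-1}(\rmD_{1,\C})$ coincides with the resolution of $(\bmX_{1,\C}, \rmD_{1,\C})$, which is SNC by construction. The main technical point --- and the one I would be most careful with --- is the verification of equivariance; it is standard but depends crucially on choosing a functorial (not merely existential) resolution algorithm, so one invokes Bierstone--Milman or Kollár rather than Hironaka's original statement.
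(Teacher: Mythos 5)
Your proposal is correct and follows the same route as the paper, whose ``proof'' is a one-line citation of Bierstone--Milman, Encinas--Villamayor, and Koll\'ar's Proposition~3.9.1 for the equivariant lifting. You have simply unpacked those citations: the functoriality argument for equivariance via the action map and projection $\bmG\times\bmX_1\to\bmX_1$ is exactly the content of Koll\'ar's proposition, and the observations that log-resolution is an isomorphism over the log-smooth locus (giving part (1)) and commutes with base change to $\C$ (giving part (2)) are standard properties of the same functorial algorithms.
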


See \cite{BierMil97} and \cite{EncVill98} for instance. 
And \cite[Proposition 3.9.1]{kollar07} explains how to lift the algebraic action of an algebraic group.

\subsection{Divisor of the G-invariant top-degree differential form}\label{secHaarMeasIsGeometric}

The result of this subsection is valid more generally for $\bmG$ semisimple and $\bmH$ with $\bmZ_{\bmG}\bmH/\bmZ(\bmH)$ being $\Q(i)$-anisotropic. $\bmZ(\bmH)$ is defined as the center of $\bmH$.

We verify that the Haar measure $\mu_{\rmG/\rmH}$ can be  constructed from $\rmD_2$ via the method of Section \ref{secEquiInGeometry}. The whole subsection is devoted to proving the following.

\begin{prop}\label{propHaarisGeometric}
There exists an effective divisor $\rmD_2'$ whose support is equal to $\supp (\rmD_2)$ and a smooth metric on $\calO_{\bmX}(\rmD_2')$ such that $\tau_{\bmX_2,\rmD'_2}\vert_{\rmG/\rmH}=\mu_{\rmG/\rmH}$.
\end{prop}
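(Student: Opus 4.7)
The plan is to construct a $\bmG$-invariant nowhere-vanishing algebraic top-degree differential form $\omega_0$ on $\bmG/\bmH$, to analyze its divisor as a meromorphic section of $\omega_{\bmX_2}$, and then to take $\rmD_2':=-\divisor(\omega_0)$ equipped with a smooth metric that trivializes the resulting line bundle.

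First I would observe that since $\bmG$ is semisimple and the standing hypothesis on $\bmH$ (reductive, or more generally the $\Q(i)$-anisotropy of $\bmZ_\bmG\bmH/\bmZ(\bmH)$) forces the modular characters of $\bmG$ and $\bmH$ to coincide on $\bmH$, the quotient $\bmG/\bmH$ carries a $\bmG$-invariant nowhere-vanishing algebraic top form $\omega_0$, unique up to scalar; after a normalization we have $|\omega_0|=\mu_{\rmG/\rmH}$ on $\rmG/\rmH$. Extend $\omega_0$ meromorphically to $\bmX_2$; since $\omega_0$ is regular and nowhere vanishing on $\bmU_2$, its divisor $\divisor(\omega_0)=\sum_\alpha m_\alpha\rmD_{2,\alpha}$ is supported on the boundary $\rmD_2$. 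Granting the key claim that $m_\alpha\leq -1$ for every irreducible component $\rmD_{2,\alpha}$, define $\rmD_2':=-\divisor(\omega_0)$; by the claim it is effective with $\supp(\rmD_2')=\supp(\rmD_2)$ as required. Then the section $s:=\omega_0\otimes\bmf_{\rmD_2'}$ of $\omega_{\bmX_2}(\rmD_2')$ has zero divisor, so it is a nowhere-vanishing global trivialization, and declaring $\|s\|\equiv 1$ specifies a smooth metric. The conclusion follows from a local check: for any local frame $\omega$ of $\omega_{\bmX_2}$ on an open of $\bmU_2$ written as $\omega=f\omega_0$, one has $\|\omega\otimes\bmf_{\rmD_2'}\|=|f|$, and hence $\tau_{\bmX_2,\rmD_2'}=|\omega|/\|\omega\otimes\bmf_{\rmD_2'}\|=|\omega_0|=\mu_{\rmG/\rmH}$ on $\bmU_2$.

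The main obstacle is therefore the key claim $m_\alpha\leq -1$ for every $\alpha$. By connectedness of $\bmG$ each $\rmD_{2,\alpha}$ is $\bmG$-stable, and the orbit finiteness from Section~\ref{secBigCompatfy}, preserved under the equivariant resolution $\bmX_2\to \bmX_1$, forces a dense open $\bmG$-orbit inside $\rmD_{2,\alpha}$; dimension counting gives $\dim\bmP_\alpha=\dim\bmH+1$ for the stabilizer of a generic $x_\alpha$ in this orbit. In local coordinates at $x_\alpha$ with $\rmD_{2,\alpha}=\{u=0\}$ and with linearized $\bmP_\alpha$-action, the $\bmG$-invariance $g^*\omega_0=\omega_0$ for $g\in\bmP_\alpha$ yields the cocycle identity $\chi_\alpha(g)^{m_\alpha+1}\det\rho_\alpha(g)=1$, where $\chi_\alpha(g)$ is the weight on the conormal line and $\rho_\alpha(g)$ the induced action on $T_{x_\alpha}\rmD_{2,\alpha}$; equivalently the Jacobian satisfies $J(g)=\chi_\alpha(g)^{-m_\alpha}$. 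I would close the argument by choosing an $\R$-split one-parameter subgroup $\{a_t\}\subset\bmP_\alpha$ that contracts in the conormal direction ($\chi_\alpha(a_t)\to 0$) and a point $y_0\in\bmU_2$ with $a_ty_0\to x_\alpha$ as $t\to\infty$ (available from the explicit description of the boundary of $\bmX_1$ through intermediate subgroups in Section~\ref{secBigCompatfy}), and by noting that $\bmG$-invariance of $|\omega_0|$ together with the contracting dynamics forces $J(a_t)\to 0$, so the cocycle identity gives $-m_\alpha>0$, i.e.\ $m_\alpha\leq -1$.
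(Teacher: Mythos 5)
Your proposal shares the paper's top-level framework: construct the $\bmG$-invariant top form $\omega_0$, set $\rmD_2':=-\divisor(\omega_0)$, trivialize by $\|s\|\equiv 1$, and reduce everything to the key claim that $\omega_0$ has a pole along every irreducible component of $\rmD_2$. (The paper states the claim as ``has poles at every $\rmD_\alpha$'', which is the same as your $m_\alpha\leq -1$.) Where you diverge is in how the key claim is established, and the divergence is substantial. The paper's route is arithmetic: it takes a sequence $g_n\in\bmG(\C)$ converging to a point of $\rmD_\alpha(\C)$, applies the non-divergence theorem (Theorem~\ref{thmEMSnondiverg1}) to $\rmR_{\Q(i)/\Q}\bmG$ to rewrite $g_n=\delta_n\gamma_n h_n$ with $\gamma_n$ in a lattice commensurable with $\bmG(\Z[i])$, and then exploits the fact that the entries of the matrices $\bigl(\la\pi_\frakh\Ad\gamma_n^{-1}w_i,x_j\ra\bigr)$ are $\Q(i)$-numbers with bounded denominators, so $\la D_{\bmw},\omega_0\ra_{[\gamma_n]}$ is either $0$ or bounded away from $0$; this does not require any special one-parameter subgroup in the stabilizer of the boundary point.

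Your route instead uses the local cocycle identity $\chi_\alpha(g)^{m_\alpha+1}\det\rho_\alpha(g)=1$ at a generic boundary point $x_\alpha$ and tries to close via a contracting $\R$-split one-parameter subgroup $\{a_t\}\subset\bmP_\alpha$. The cocycle identity itself is correct, and is indeed in the spirit of Chambert-Loir--Tschinkel's computation for group compactifications which the paper cites as inspiration. However, there are two genuine gaps. First, you assert the existence of $\{a_t\}\subset\bmP_\alpha$ with $\chi_\alpha(a_t)\to 0$ and of a point $y_0\in\bmU_2$ with $a_ty_0\to x_\alpha$, citing ``the explicit description of the boundary of $\bmX_1$'' — but the divisors live on the resolution $\bmX_2$, not on $\bmX_1$, and after an equivariant resolution the boundary geometry is far less explicit; moreover it is not at all obvious that $\chi_\alpha$ is nontrivial on any $\R$-split torus of $\bmP_\alpha$ (if $\chi_\alpha$ were trivial the cocycle identity would say nothing about $m_\alpha$). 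This existence is precisely the kind of structure-theoretic input that the paper's arithmetic argument is designed to avoid. Second, the step ``$\bmG$-invariance of $|\omega_0|$ together with contracting dynamics forces $J(a_t)\to 0$'' is circular as stated: $J(a_t)=\chi_\alpha(a_t)^{-m_\alpha}$ is the Jacobian at the fixed point $x_\alpha$, not at $y_0$, and the measure-of-a-translated-neighborhood argument must be run carefully (one needs that $a_tV$ stays away from deeper boundary strata and that the expanding $\rho_\alpha$-directions along $\rmD_{2,\alpha}$ don't spoil the estimate), at which point what you actually prove directly is $m_\alpha<0$ rather than first $J(a_t)\to 0$. So while the geometric idea is plausible and attractive, the proposal as written leaves the existence of the contracting torus as an unproved hypothesis, whereas the paper's proof is complete under the stated conditions.
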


 To prove this it suffices to show that 
 \subsubsection*{Claim}
 The $\bmG$-invariant top-degree differential form $\omega_0$ on $\bmG/\bmH$, when viewed as a meromorphic section of the canonical line bundle, has poles at every $\rmD_{\alpha}$ for every irreducible component $\rmD_{\alpha}$.
 
 Once this is proved, $\rmD_2'$, defined as $-\divisor(\omega_0)$, is an effective divisor whose support is equal to $\supp(\rmD_2)$ and $\omega_0$ becomes a global nowhere vanishing section of $\omega_{\bmX_2}(\rmD'_2)$. And we simply define a metric on $\omega_{\bmX_2}(\rmD'_2)$ by imposing $\norm{\omega_0}\equiv 1$. This metric is smooth and $\tau_{\bmX_2,\rmD_2'}$ is equal to the Haar measure $|\omega_0|$ on $\rmG/\rmH$.
 
 To prove the claim it suffices to show that 
 for each irreducible component $\rmD_{\alpha}$ of $\rmD_{\C}$, there exists $x\in \rmD_{\alpha}(\C)$ and a section $D$ in the dual of $\omega_{\bmX}$ such that $D_{x}=0$ but $\la D, \omega_0 \ra_{x_i} $ is bounded away from $0$ from below for a sequence $(x_i)$ converging to $x$. 
 As $\omega_0$ is $\bmG$-invariant, it is allowed to replace $x$ by $gx$ for some $g\in \bmG$.
 Let us note this approach has been applied to the setting of bi-equivariant compactification of $\bmG$ in \cite[Section 2]{ChamTschin02} and the proof below is inspired by theirs.

As $\bmU_2(\C)$ is dense in $\bmX_2(\C)$ in the Hausdorff topology, we may find a sequence $(g_n)$ from $\bmG(\C)$ such that $\lim g_n\oplus v_{\bmL} =x$. 

Now apply Theorem \ref{thmEMSnondiverg1} to
\begin{itemize}
    \item $\bmG':= \rmR_{\Q(i)/\Q}\bmG$;
    \item $\bmH':= \rmR_{\Q(i)/\Q}\bmH$;
    \item $\Gamma'$ to be a lattice commensurable with $\bmG'(\Z)$;
    \item $(g_n)\subset \bmG(\C)\cong \bmG'(\R)$.
\end{itemize}
Note that under our condition \ref{condition 1}, we still have $(\bmZ_{\bmG'}\bmH')^{\circ}\subset \bmH'$.
We find a  bounded sequence $(\delta_n)$ in $\bmG(\C)$, a sequence $(\gamma_n)$ in $\Gamma'$, which is commensurable with $\bmG(\Z[i])$ when identifying $\bmG'(\R)\cong \bmG(\C)$, and $(h_n)$ in $\bmH(\C)$ such that $g_n=\delta_n \gamma_n h_n$. 
As $g_n\oplus v_{\bmL} = \delta_n \gamma_n v_{\bmL}$, we may and do assume that $h_n=id$. By passing to a subsequence also assume that $\delta_n$ converges to $\delta_{\infty}$. Now $x= \lim \delta_{\infty} \gamma_n\oplus v_{\bmL}$, so we just assume that $\delta_n=\delta_{\infty}$. 
Replacing $x$ by $\delta_{\infty}^{-1}x$, we further assume $g_n=\gamma_n$. 

Let $\frakg$ (resp. $\frakh$) denote the Lie algebra of $\bmG$ (resp. $\bmH$). Let $m:=\dim \bmG/\bmH$. 
By passing to a further subsequence we find an $m$-dimensional $\Q(i)$-subspace $W$ in $\frakg_{\Q(i)}$ such that $W \cap \Ad \gamma_n \frakh =\{0\}$ for all $n$. 

Our strategy of proving the claim is to construct a section $D$ of $\bigwedge^{\text{top}}\scrT_{\bmX_2}$, where $\scrT_{\bmX_2}$ denotes the tangent bundle on $\bmX_2$, such that $\la D, \omega_0\ra_{x} \neq 0$ but $D$ vanishes at $x$. 
It suffices to show that the quantity $|\la D, \omega_0\ra_{\gamma_n\oplus v_{\bmL}} | > \delta_0$ for some $\delta_0>0$ independent of $n$.

Indeed for a vector $v \in \frakg$, define $D_v$ by
\begin{equation}
    \la D_v,f\ra_x  := \frac{\diff}{\difft}\bigg\vert_{t=0} f(\exp(tv)x)
\end{equation}
for every function $f$ defined locally at $x$.
For $\bmv=v_1\wedge v_2 \wedge ... \wedge v_m$, we let $D_{\bmv}:= D_{v_1}\wedge...\wedge D_{v_m}$. Then we choose $D$ to be $D_{\bmw}$ for some set $\{w_1,...,w_m\}$ of $\Q(i)$-basis of $W$.

Let $\pi_{\frakh}$ denote the projection $\frakg\to \frakg/\frakh$. We fix a set of basis $\{x_1,...,x_m\}$ of $(\frakg/\frakh)^*$. 
Choose a $\Q(i)$-subspace $\frakh'$ of $\frakg$ complementary to $\frakh$ and we pull back $x_i$ to be linear functionals $x_i'$ on $\frakh'$. Identify a small neighborhood of $0$ in $\frakh'(\C)$ with a small neighborhood of $[e]$ in $\bmG/\bmH$ via $v \mapsto [\exp(v)]$. 
Under this identification, each $x_i'$ becomes a function $\widetilde{x}_i$ on a small neighborhood of $[e]$ in $\bmG/\bmH$. 
Then $\omega_0$ is a $\bmG$-invariant differential form whose localization at identity coset is equal to some non-zero multiple of $\diff{\wtx}_1\wedge...\wedge \diff{\wtx}_m$. Without loss of generality, we just assume that they are equal at the identity coset. In light of the following lemma, this definition does not depend on the choice of $\frakh'$.

\begin{lem}
$\la D_v , \diff{\wtx}_i \ra_{[e]} = \la\pi_{\frakh}(v) ,x_i \ra$.
\end{lem}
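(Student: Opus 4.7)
The plan is to unwind the definition of $\wtx_i$ and differentiate the curve $t \mapsto \exp(tv)\cdot[e]$ directly. First, decompose $v = v_1 + v_2$ according to the splitting $\frakg = \frakh' \oplus \frakh$. Since $\pi_{\frakh}$ restricts to a linear isomorphism $\frakh' \to \frakg/\frakh$, we have $\pi_{\frakh}(v) = \pi_{\frakh}(v_1)$, and by the very construction of $x_i'$ as the pull-back of $x_i$ along this isomorphism, $x_i'(v_1) = \la\pi_{\frakh}(v), x_i\ra$. So the whole claim reduces to showing that the directional derivative picks out exactly the $\frakh'$-component of $v$.

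The key ingredient is a local factorization $\exp(tv) = \exp(u(t))\,h(t)$ for $t$ near $0$, with smooth curves $u(t) \in \frakh'$ and $h(t) \in \bmH$ satisfying $u(0) = 0$, $h(0) = e$. This follows from the inverse function theorem applied to the multiplication map $\frakh' \times \bmH \to \bmG$, $(u,h) \mapsto \exp(u)h$, whose differential at $(0, e)$ is the identity on $\frakg$ via the decomposition $\frakh' \oplus \frakh$. Differentiating the factorization at $t=0$ gives $v = \dot u(0) + \dot h(0)$ with $\dot u(0) \in \frakh'$ and $\dot h(0)\in \frakh$, hence $\dot u(0) = v_1$.

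Finally, because $\exp(tv)$ and $\exp(u(t))$ represent the same coset in $\bmG/\bmH$, and $\wtx_i([\exp(u)]) = x_i'(u)$ for $u$ near $0$ in $\frakh'$ by definition of $\wtx_i$, we obtain $\wtx_i(\exp(tv)\cdot[e]) = x_i'(u(t))$. Differentiating at $t=0$ yields
\begin{equation*}
    \la D_v, \diff\wtx_i\ra_{[e]} = \frac{\diff}{\difft}\bigg\vert_{t=0} x_i'(u(t)) = x_i'(\dot u(0)) = x_i'(v_1) = \la \pi_{\frakh}(v), x_i\ra,
\end{equation*}
which is the desired identity. There is no real obstacle: the statement is essentially a chain-rule computation, and the only substantive input is the local diffeomorphism $\frakh' \times \bmH \cong \bmG$ near the identity, which is standard. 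As a sanity check, when $v \in \frakh$ both sides vanish, confirming that the quantity depends only on the class $\pi_{\frakh}(v)$ and not on the choice of complement $\frakh'$.
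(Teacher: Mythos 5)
Your proof is correct and follows essentially the same approach as the paper: decompose $v = v_1 + v_2$ with respect to $\frakg = \frakh' \oplus \frakh$ and reduce the computation to the $\frakh'$-component. The only cosmetic difference is that the paper shortcuts via linearity, writing $D_v = D_{v_1} + D_{v_2}$ and observing $(D_{v_2})_{[e]} = 0$ since $\exp(tv_2)$ fixes $[e]$, whereas you factorize the curve $\exp(tv) = \exp(u(t))h(t)$ and differentiate; both land on the same chain-rule computation in the chart $u \mapsto [\exp(u)]$.
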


\begin{proof}
Write $v= v_{\frakh'}+v_{\frakh} = v_1 + v_2$, then $D_{v}=D_{v_1}+D_{v_2}$ and $(D_{v_2})_{[e]}=0$.
\begin{equation*}
\begin{aligned}
       &\la D_v , \diff \wtx_i \ra_{[e]} = 
    \la D_{v_1} , \diff \wtx_i \ra_{[e]}  
     = \frac{\diff}{\difft}\bigg\vert_{t=0} \wtx_i([\exp{(v_1t)}])\\
     =& \frac{\diff}{\difft}\bigg\vert_{t=0}
     x'_i(v_1t)
    =\la \pi_{\frakh}(v_1) , x_i \ra = \la \pi_{\frakh}(v) ,x_i \ra.
\end{aligned}
\end{equation*}
\end{proof}

\begin{lem}
$\la D_{\bmw} , \omega_0 \ra_{[g]} = 
\det \left( 
\la  \pi_{\frakh}( \Ad g^{-1}w_i ), x_j\ra
\right)_{i,j}$.
\end{lem}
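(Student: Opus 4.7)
The plan is to use the $\bmG$-invariance of $\omega_0$ to transport the pairing at $[g]$ back to the identity coset, where the previous lemma already computes $\la D_v, \diff \wtx_j\ra_{[e]}$.

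First I would record how the vector fields $D_v$ transform under the diffeomorphism $L_{g^{-1}} : [x]\mapsto [g^{-1}x]$ of $\bmG/\bmH$. By definition $(D_v)_{[g]}$ is the velocity at $t=0$ of the curve $\exp(tv)g\bmH$. Pushing forward by $L_{g^{-1}}$ yields the curve $g^{-1}\exp(tv)g\bmH = \exp\!\bigl(t\Ad(g^{-1})v\bigr)\bmH$, whose velocity at $t=0$ is $(D_{\Ad(g^{-1})v})_{[e]}$. Hence
\begin{equation*}
(L_{g^{-1}})_*(D_v)_{[g]} = (D_{\Ad(g^{-1})v})_{[e]},
\end{equation*}
and by multilinearity of push-forward on multivectors,
\begin{equation*}
(L_{g^{-1}})_*(D_{\bmw})_{[g]} = \bigl(D_{\Ad(g^{-1})w_1}\wedge\cdots\wedge D_{\Ad(g^{-1})w_m}\bigr)_{[e]}.
\end{equation*}

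Next I would invoke $\bmG$-invariance of $\omega_0$, namely $L_g^*\omega_0 = \omega_0$, to write
\begin{equation*}
\la D_{\bmw},\omega_0\ra_{[g]} = \la (L_{g^{-1}})_* D_{\bmw},\,L_{g^{-1}}^*\omega_0\ra_{[e]} = \la D_{\Ad(g^{-1})w_1}\wedge\cdots\wedge D_{\Ad(g^{-1})w_m},\,\omega_0\ra_{[e]}.
\end{equation*}
Using the normalization $\omega_0\big\vert_{[e]} = \diff\wtx_1\wedge\cdots\wedge \diff\wtx_m$ and the standard identity for pairing a wedge of tangent vectors with a wedge of covectors, the right-hand side becomes $\det\bigl(\la D_{\Ad(g^{-1})w_i}, \diff\wtx_j\ra_{[e]}\bigr)_{i,j}$. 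Applying the preceding lemma to each entry converts this into $\det\bigl(\la \pi_{\frakh}(\Ad(g^{-1})w_i),\,x_j\ra\bigr)_{i,j}$, as required.

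The only nontrivial point is verifying the push-forward formula for $D_v$ under $L_{g^{-1}}$; the remainder is bookkeeping with $\bmG$-invariance and the multilinear algebra of the pairing $\bigwedge^{m}T_{[e]}(\bmG/\bmH)\otimes \bigwedge^{m}T^*_{[e]}(\bmG/\bmH)\to\C$.
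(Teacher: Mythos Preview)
Your argument is correct and is essentially the same as the paper's: both use $\bmG$-invariance of $\omega_0$ to reduce the pairing at $[g]$ to one at $[e]$, and then invoke the preceding lemma entrywise. The only cosmetic difference is that the paper pulls back the form (writing $(\omega_0)_{[g]}=\rmL_{g^{-1}}^*(\omega_0)_{[e]}$ and computing $\la D_{w_i},\rmL_{g^{-1}}^*\diff\wtx_j\ra_{[g]}$ directly), whereas you push forward the vectors; your displayed middle expression $\la (L_{g^{-1}})_*D_{\bmw},\,L_{g^{-1}}^*\omega_0\ra_{[e]}$ has a harmless slip in the second slot (naturality gives $(L_g)^*\omega_0$ there, but by invariance both equal $\omega_0$), and the conclusion is unaffected.
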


\begin{proof}
Note that $(\omega_0)_{[g]} = \text{L}_{g^{-1}}^*(\omega_0)_{[e]}$ where $\text{L}_g$ denotes the left action of $\rmG$ on $\rmG/\rmH$.
So we just need to show that 
$\la D_{w_i} ,  \text{L}_{g^{-1}}^*\diff\wtx_j \ra_{[g]} = \la  \pi_{\frakh} (\Ad g^{-1}w_i) , x_j\ra$.
Indeed
\begin{equation*}
\begin{aligned}
           \LHS &= \frac{\diff}{\difft}\bigg\vert_{t=0}
    \text{L}_{g^{-1}}(\wtx_j)(\exp(w_it)[g]) 
    =\frac{\diff}{\difft}\bigg\vert_{t=0}
    (\wtx_j)(g^{-1}\exp(w_it)g[e]) \\
    &=\la 
    D_{(\Ad g^{-1})w_i} , \diff \wtx_j
    \ra_{[e]},
\end{aligned}
\end{equation*}
which is equal to the right hand side by the lemma above.
\end{proof}

Now we can prove our claim. As $w_i$'s are $\Q(i)$-vectors and $\gamma_n$'s  are contained in $\Gamma'$, which is commensurable with $\bmG(\Z[i])$, we have that 
$\left\{\Ad \gamma_n^{-1} \cdot w_i \right\}_{n,i}$ are vectors with bounded denominators. As $x_j$'s, $\pi_{\frakh}$ are all defined over $\Q(i)$, we have that the entries of the matrices $ \left( 
\la \pi_{\frakh} \Ad \gamma_n^{-1}w_i ,
    x_j \ra
\right)_{i,j}$ as $n$ varies are $\Q(i)$-vectors with bounded denominators. Therefore there exists a constant $\ep_0 >0$ such that 
\begin{equation*}
\la D_{\bmw},\omega_0\ra_{[\gamma_n]}=
    \left|
\det \left(
\la \pi_{\frakh} \Ad \gamma_n^{-1}w_i ,
    x_j
\ra \right)_{i,j}
\right|
\end{equation*}
 is either $0$ or at least $\ep_0$. By our choice of $W$, it is not equal to $0$, so we are done.

\subsection{One source of examples of heights}\label{secExamHt}

By partition of unity one can construct smooth metrics for any line bundle. On the other hand, it is not clear that every proper (in the sense of topology) real algebraic function $l: \rmG/\rmH \to \R_{>0}$ can be obtained this way. In this subsection we explain that those arising from a representation and an Euclidean norm (a variant: replacing all the number $2$ below by any other fixed positive number) does arise this way.

Let $\rho_0:\bmG \to \GL(V_0)$ be a representation of $\bmG$ over $\Q$. Let $v_0\in V_0(\Q)$ be a vector whose stabilizer in $\bmG$ is exactly equal to $\bmH$. 
Fix an isomorphism $V_0(\R)\cong \R^N$ for some $N$ and write 
\begin{equation*}
   \norm{x}:= \left(
    x_1^2+...+x_N^{2}
    \right)^{1/2}.
\end{equation*}
Consider the function $l:\bmG/\bmH(\R)\to \R$ defined by
\begin{equation*}
    l([g]):= \norm{g\cdot v_0}.
\end{equation*}
Let $\bmX_{v_0}$ be the compactification of $\bmG\cdot v_0$ in $\bbP(V_0\oplus \Q)$.
By considering the diagonal embedding $\bmG/\bmH \to \bmX_1 \times \bmX_{v_0}$ and replacing $\bmX_1$ by the compactification of this one, we may assume that there exists $\pi_{v_0}: \bmX_1 \to \bmX_{v_0}$ that is $\bmG$-equivariant.

\begin{lem}\label{lemNormRepIsGeometric}
Under the assumption made in the last paragraph, there exists a line bundle $L$ on $\bmX_2$ whose support contains $\rmD_2$ and a smooth metric such that $l$ coincides with the corresponding $\height$ defined as in Section \ref{setupAlgebraic}.
\end{lem}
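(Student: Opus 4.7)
The plan is to take $L$ as the pullback of $\calO_{\bbP(V_0\oplus\Q)}(1)$ along the composition $\pi:=\pi_{v_0}\circ\pi_1\colon\bmX_2\to\bmX_{v_0}\hookrightarrow\bbP(V_0\oplus\Q)$, with canonical section $f_L:=\pi^*s_t$ where $s_t$ is the global section corresponding to the linear form $(w,t)\mapsto t$, and to equip $\calO_{\bbP}(1)$ with the (non-Fubini--Study) metric characterized by $\|s_t([w:t])\|=|t|/\|w\|$. This formula is invariant under projective scaling $(w,t)\mapsto(\lambda w,\lambda t)$, and hence gives a well-defined smooth metric on $\calO_{\bbP}(1)$ over $\bbP(V_0\oplus\Q)\setminus\{[0:1]\}$. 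Under these choices, $\height([g])=\|\pi^*s_t([g])\|^{-1}=\|g\cdot v_0\|=l([g])$ holds identically on $\bmG/\bmH$.

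Two geometric inputs make everything work. First, I would verify that the image of $\pi$ avoids the singular point $[0:1]$ of the metric. Since $\bmH$ is reductive by Condition \ref{condition 1}, Matsushima's theorem yields that $\bmG\cdot v_0$ is Zariski-closed in $V_0$, and combined with $v_0\neq 0$ this forces $0\notin\overline{\bmG\cdot v_0}$ in the Hausdorff topology. The closure $\bmX_{v_0}$ inside $\bbP(V_0\oplus\Q)$ only picks up new points in $\{t=0\}$, so $[0:1]\notin\bmX_{v_0}\supset\pi(\bmX_2)$.

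Second, I would show $\{f_L=0\}\supset\rmD_2$. Recall that $\bmX_1$ was replaced by the closure of $\bmG/\bmH$ diagonally embedded inside the product of the original $\bmX_1$ with $\bmX_{v_0}$, so any $x\in\rmD_1$ is a Hausdorff limit of points of the form $([g_n]_{\mathrm{orig}},[g_nv_0:1])$. If $\pi_{v_0}(x)=[g_\infty v_0:1]$ were to lie in the interior $\bmG\cdot v_0\times\{1\}$, then $g_nv_0\to g_\infty v_0$ in $V_0$; because the orbit map $\bmG/\bmH\to\bmG\cdot v_0$ is a homeomorphism (the orbit is a closed submanifold of $V_0$ since $\bmH$ is reductive), this would force $[g_n]\to[g_\infty]$ in $\bmG/\bmH$, hence $x\in\bmG/\bmH$, a contradiction. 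Therefore $\pi_{v_0}(\rmD_1)\subset\{t=0\}$, whence $\pi(\rmD_2)\subset\{t=0\}$ and $\rmD_2\subset\{f_L=0\}$.

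Finally, smoothness of the prescribed metric on $\bbP(V_0\oplus\Q)\setminus\{[0:1]\}$ will be verified by a direct chart computation: in the affine chart $\{w_i\neq 0\}$ with coordinates $(u_j=w_j/w_i,\,v=t/w_i)$, the transition $s_t=v\cdot s_{w_i}$ converts the defining formula into $\|s_{w_i}\|^2=1/\sum_j u_j^2$ (with $u_i=1$), which is a smooth strictly positive function; analogous computations handle the remaining charts. Compatibility across charts is automatic from the projective-scaling invariance, and pullback by the morphism $\pi$ of smooth varieties then yields a smooth metric on $L$ over $\bmX_2$. The main technical point is the combination of this chart-wise smoothness check with the closed-orbit argument placing the singular point $[0:1]$ outside the image of $\pi$; once these are in hand, everything else is bookkeeping.
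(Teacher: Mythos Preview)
Your approach is the same as the paper's---pull back $\calO_{\bbP(V_0\oplus\Q)}(1)$ together with the hyperplane section $s_t$ and the metric $\|s_t([w:t])\|=|t|/\|w\|$---and you supply considerably more detail, correctly isolating the singular point $[0:1]$ of this metric and explicitly checking $\supp(f_L)\supset\rmD_2$, neither of which the paper spells out.

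There is, however, a real gap. Matsushima's theorem says $\bmG/\bmH$ is affine if and only if $\bmH$ is reductive; it does \emph{not} say that a reductive stabilizer forces the orbit $\bmG\cdot v_0$ to be Zariski-closed in $V_0$, and that implication is in fact false: for $\SL_2$ acting on binary cubics, $v_0=x^2y$ has trivial (hence reductive) stabilizer, yet $\diag(t,t^{-1})\cdot v_0=t\,v_0\to 0$, so $0\in\overline{\SL_2\cdot v_0}$. Both of your geometric inputs rely on closedness of the orbit (in the second, your homeomorphism argument only excludes $\pi_{v_0}(x)$ from the open orbit, not from $(\overline{\bmG\cdot v_0}\setminus\bmG\cdot v_0)\times\{1\}$). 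The remedy is to invoke the \emph{second} clause of Condition~\ref{condition 1}, not just reductivity: if $\bmG\cdot v_0$ were not closed, Kempf's theorem produces a one-parameter subgroup $\lambda$ with $\lim_{t\to 0}\lambda(t)v_0\notin\bmG\cdot v_0$ and $\bmH\subset P(\lambda)$; conjugating the reductive $\bmH$ into the Levi $Z_{\bmG}(\lambda)$ by an element $u$ of the unipotent radical shows $u^{-1}\lambda u\subset(\bmZ_{\bmG}\bmH)^\circ\subset\bmH$, whence $u^{-1}\lambda u$ fixes $v_0$, contradicting that it still drives $v_0$ out of the orbit. With this correction your argument goes through.
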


\begin{proof}
Indeed the function $l$ may be viewed as obtained from certain metric on the pull-back of the line bundle $\calO_{\bbP(\bmV_0\oplus \Q)}(1)$ whose evaluation at the canonical section associated with the divisor $\bbP(\bmV_0\oplus \{0\})$ is non-vanishing. As there is a morphism from $\bmX_2$ to $\bmX_{v_0}$ we can further pull back the metrized line bundle and the effective divisor to $\bmX_2$. It only remains to check that the metric is indeed smooth. 
Though $\bmX_{v_0}$ may not be smooth by itself, the metric is smooth in the sense that it can be extended to a smooth metric in the ambient space. 
Therefore its pull-back to $\bmX_2$ also satisfies the same property. But $\bmX_2$ is smooth by itself, hence this metric is also smooth in the usual sense. And the proof completes.
\end{proof}

\section{Conclusion of the proof}\label{secWrapUp}

Now we prove Proposition \ref{propEquionProb}, Corollary \ref{CoroCounting2}, and Theorem \ref{thmEquiorCountwithWeights} in this section.

Let $X_1:=\pi_1^{-1}(X_0)$, $X_2:=\pi_2^{-1}(X_1)$ and $X_{v_0}:= \overline{\rmG\cdot v_0}$. Thus we have the following $\rmG$-equivariant picture:
\[
   \begin{tikzcd}
        && X_2 \arrow[d, "\pi_2"]& \\
        && X_1 \arrow[dl, "\pi_1"'] \arrow[dr,"\pi_{v_0}"]& \\
        \Prob(\rmG/\Gamma) & X_0\arrow[l, "\text{inclusion}"]&& X_{v_0}.
   \end{tikzcd}
\]
Let $U_2:=(\pi_1\circ\pi_2)^{-1}U_0$ and $$\lambda_2:=\lim_{R\to\infty} \frac{1}{\mu_{\rmG/\rmH}(B_R\cap U_2)}\cdot \mu_{\rmG/\rmH}\vert_{B_R\cap U_2}$$ in $\Prob(X_2)$, which exists by Theorem \ref{thmEquiBallsOnVariety} with the help of Theorem \ref{thmResolution}, Proposition \ref{propHaarisGeometric} and Lemma \ref{lemNormRepIsGeometric}. Write $\lambda_0:=(\pi_1\circ\pi_2)_*\lambda_2$. Recall 
$\Ave: \Prob(\Prob(\rmG/\Gamma)) \to \Prob(\rmG/\Gamma)$ from Section \ref{secAReformulation}. Then we have
\begin{equation*}
    \lim_{R\to \infty} 
    \frac{1}{\mu_{\rmG/\rmH}(B_R)} \int_{B_R} g_* \mu_{\rmH} \,\mu_{\rmG/\rmH}([g]) = \Ave(\lambda_0) = \int \mu \, \lambda_0(\mu)
\end{equation*}
also exists.

This proves Proposition \ref{propEquionProb} and hence Theorem \ref{thmCounting1}. Together with Theorem \ref{thmAsymBalls}, Corollary \ref{CoroCounting2} follows. Now we explain how to prove Theorem \ref{thmEquiorCountwithWeights} in a similar way.

Now let $\phi$ be a continuous function on $X_2$, we need to know where does the integration of $\phi$ against the measure indicated in Theorem \ref{thmEquiorCountwithWeights} converges to. Pushing downwards our discussion below to $X_{v_0}$ gives the proof.

Using similar proof as in Proposition \ref{propEquiImplyCount}, we can show the following (proof omitted). Note that the assumption made in Proposition \ref{propEquiImplyCount} has been verified in Section \ref{secBigEquiOnVarie}.

\begin{prop}
The following limit 
\begin{equation*}
    \lim_{R \to + \infty} 
    \frac{1}{\mu_{\rmG/\rmH}(B_R)} 
    \int_{B_R} \phi ([g]) g_* \mu_{\rmH} \, \mu_{\rmG/\rmH}([g])
\end{equation*}
exists and  there exists a positive continuous function $f^{\phi}_{\infty}$ on $\rmG/\Gamma$ such that 
\begin{equation*}
    f^{\phi}_{\infty}([g]) 
    = \lim_{R\to +\infty} 
    \frac{
    \sum_{\gamma \in \Gamma / \Gamma \cap \rmH} (1_{B_R}\cdot \phi) (g\gamma \cdot v_0)
    }
    {\mu_{\rmG/\rmH}(B_R)}.
\end{equation*}
\end{prop}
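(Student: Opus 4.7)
The plan is to mimic the proof of Proposition \ref{propEquiImplyCount}, inserting the continuous weight $\phi$ at the appropriate places and using the equidistribution input already assembled in Section \ref{secBigEquiOnVarie}. First I would establish that the first displayed limit exists as a Borel measure on $\rmG/\Gamma$. Since $\phi$ is bounded and continuous on the compact space $X_2$, and since Theorem \ref{thmEquiBallsOnVariety} (combined with Proposition \ref{propHaarisGeometric} and Lemma \ref{lemNormRepIsGeometric}) gives
$$\frac{1}{\mu_{\rmG/\rmH}(B_R \cap U_2)}\, \mu_{\rmG/\rmH}\bigl\vert_{B_R \cap U_2} \longrightarrow \lambda_2 \in \Prob(X_2)$$
in the weak-$*$ topology, multiplying by $\phi$ preserves this convergence. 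Pushing forward by the continuous $\rmG$-equivariant map $\pi_0 \circ \pi_2 : X_2 \to X_0 \subset \Prob(\rmG/\Gamma)$ of Lemma \ref{lemAlgeConvImplyMearConv2} and applying the continuous averaging operator $\Ave$ produces the desired limit, which I shall call $\mu_\infty^\phi$.

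Next, to exhibit $f^\phi_\infty$, I would repeat the sandwich argument of Step 1 in Proposition \ref{propEquiImplyCount}. Define
$$\Phi^\phi_R([g]) := \frac{1}{\mu_{\rmG/\rmH}(B_R)} \sum_{\gamma \in \Gamma/\Gamma \cap \rmH} (1_{B_R}\phi)(g \gamma \cdot v_0).$$
Fix $[g_0] \in \rmG/\Gamma$, choose nested neighborhoods $V'_\varepsilon \subset V_\varepsilon \subset U_\varepsilon$ of $[g_0]$ as before, and choose a continuous cutoff $f_\varepsilon$ with $1_{V'_\varepsilon[g_0]} \le f_\varepsilon \le 1_{V_\varepsilon[g_0]}$. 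Because $\phi$ depends only on the $\rmG/\rmH$-coset of $g\gamma$ (not on the particular lattice representative), the same unfolding identity as in \cite{DukRudSar93, EskMcM93} gives
$$\bigl\langle \Phi^\phi_R, f_\varepsilon \bigr\rangle_{\mu_\rmG} = \frac{1}{\mu_{\rmG/\rmH}(B_R)}\int_{[g] \in B_R} \phi([g])\, (g_*\mu_{\rmH})(f_\varepsilon) \, \mu_{\rmG/\rmH}([g]),$$
whose right hand side converges to $\int f_\varepsilon\, d\mu^\phi_\infty$ by the first step. Squeezing $\Phi^\phi_R$ between the pairings against $1_{V'_\varepsilon[g_0]}$ and $1_{V_\varepsilon[g_0]}$, then letting $\varepsilon \to 0$ using niceness of $\{B_R\}$, yields existence of the pointwise limit
$$f^\phi_\infty([g_0]) = \lim_{\varepsilon \to 0}\frac{\mu^\phi_\infty(V_\varepsilon[g_0])}{\mu_\rmG(V_\varepsilon[g_0])},$$
which coincides with the right hand side of the second displayed equation. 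Continuity of $f^\phi_\infty$ follows exactly as in Step 1 by comparing $f^\phi_\infty(u[g_0])$ with $f^\phi_\infty([g_0])$ via $u^{-1} B_R \subset B_{R/(1-\varepsilon)}$.

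For absolute continuity of $\mu^\phi_\infty$ with respect to $\mu_\rmG$, the argument of Step 2 carries over verbatim: the weight $\phi$ is bounded, so for any $\mu_\rmG$-null set $E$ the Haar-average of $u_*\mu^\phi_\infty(E)$ over a small neighborhood $U_\varepsilon$ differs from $\mu^\phi_\infty(E)$ by at most $\Vert \phi\Vert_\infty \cdot \varepsilon'$, and that Haar-average vanishes by the same Fubini interchange. Assuming $\phi$ is strictly positive (otherwise the conclusion should be read as ``non-negative''), strict positivity of $f^\phi_\infty$ follows from the translation argument of Step 1: if $f^\phi_\infty$ vanishes at one point, it vanishes everywhere, contradicting $\int f^\phi_\infty\, d\mu_\rmG = \int \phi\, d\lambda_2 > 0$.

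The main obstacle is largely clerical rather than conceptual: one must carefully verify the weighted unfolding identity and check that the bounded weight $\phi$ does not disrupt the absolute-continuity argument. The essential equidistribution content, namely that the normalized volume measures on $B_R \cap U_2$ converge in $\Prob(X_2)$, has already been secured in Section \ref{secBigEquiOnVarie}, so no additional ergodic-theoretic input is required.
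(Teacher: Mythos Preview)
Your proposal is correct and follows exactly the route the paper intends: the paper itself omits the proof, saying only ``Using similar proof as in Proposition \ref{propEquiImplyCount}, we can show the following (proof omitted),'' and your outline is precisely that adaptation. The one technical wrinkle you do not make explicit is that in the sandwich bounds of Step~1 the weight is evaluated at $ug_0\gamma\cdot v_0$ rather than $g_0\gamma\cdot v_0$, so one also needs uniform continuity of $\phi$ on the compact space $X_2$ to absorb the resulting discrepancy into the $\varepsilon$-error; this is routine and does not affect the structure of the argument.
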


Hence each $[g]\in \rmG/\Gamma$,
\begin{equation*}
    \phi \mapsto 
     \frac{f^{\phi}_{\infty}([g]) }{f^{1}_{\infty}([g]) }
\end{equation*}
defines a positive bounded linear functional on $C(X_2)$ sending $1$ to $1$. By Riesz representation theorem, there exists $\nu_{[g]}\in \Prob(X_2)$ such that 
\begin{equation*}
    \nu_{[g]} = \lim_{R\to +\infty} 
    \frac{1}{f^{1}_{\infty}([g])\mu_{\rmG/\rmH}(B_R)} 
    \sum_{\gamma \in \Gamma /\Gamma \cap \rmH} \delta_{g\gamma \cdot v_0}.
\end{equation*}
Pushing this measure forward to $X_{v_0}$ yields Theorem \ref{thmEquiorCountwithWeights}.

\bibliographystyle{amsalpha}
\bibliography{ref}

\end{document}